\theoremstyle{definition}
\newtheorem*{rep@theorem}{\rep@title}
\newcommand{\newreptheorem}[2]{%
\newenvironment{rep#1}[1]{%
 \def\rep@title{#2 \ref{##1}}%
 \begin{rep@theorem}}%
 {\end{rep@theorem}}}
\newtheorem{thm}{Theorem}
\newtheorem{lem}[thm]{Lemma}
\newtheorem{prop}[thm]{Proposition}
\newtheorem{cor}[thm]{Corollary}
\newtheorem{defn}[thm]{Definition}
\newtheorem{Q}[thm]{Question}
\newtheorem{claim}[thm]{Claim}
\newcounter{parnum}
\newcounter{foo}
\newenvironment{myequation}
{\setcounter{equation}{\value{foo}}\begin{equation}}
{\setcounter{foo}{\value{equation}}\end{equation}\ignorespacesafterend}
\newenvironment{myeqnarray}
{\setcounter{equation}{\value{foo}}\begin{eqnarray}}
{\setcounter{foo}{\value{equation}}\end{eqnarray}\ignorespacesafterend}
\newenvironment{myenumerate}
{\begin{enumerate}\setcounter{enumi}{\value{foo}}}
{\setcounter{foo}{\value{enumi}}\end{enumerate}\ignorespacesafterend }
\numberwithin{equation}{section}
\numberwithin{enumi}{section}
\numberwithin{foo}{section}
\numberwithin{thm}{section}
\numberwithin{parnum}{section}
\title{Relative Categoricity and Abstraction Principles}
\author{Sean Walsh\footnote{Department of Logic and Philosophy of Science, 5100 Social Science Plaza, University of California, Irvine, Irvine, CA 92697-5100, U.S.A., swalsh108@gmail.com or walsh108@uci.edu} \; and Sean Ebels-Duggan\footnote{Department of Philosophy, Northwestern University, 1860 Campus Drive, Evanston, Illinois 60208-2214, U.S.A., s-ebelsduggan@northwestern.edu}}
\date{\today}
\begin{document}
\maketitle

\begin{abstract}
Many recent writers in the philosophy of mathematics have put great weight on the relative categoricity of the traditional axiomatizations of our foundational theories of arithmetic and set theory (\cite{Parsons1990a}, \cite{Parsons2008} \S{49}, \cite{McGee1997aa}, \cite{Lavine1999aa}, \cite{Vaananen2014aa}). Another great enterprise in contemporary philosophy of mathematics has been Wright's and Hale's project of founding mathematics on abstraction principles (\cite{Hale2001}, \cite{Cook2007aa}). In \cite{Walsh2012aa}, it was noted that one traditional abstraction principle, namely Hume's Principle, had a certain relative categoricity property, which here we term \emph{natural relative categoricity}. In this paper, we show that most other abstraction principles are \emph{not} naturally relatively categorical, so that there is in fact a large amount of incompatibility between these two recent trends in contemporary philosophy of mathematics. To better understand the precise demands of relative categoricity in the context of abstraction principles, we compare and contrast these constraints to (i) stability-like acceptability criteria on abstraction principles (cf. \cite{Cook2012aa}), (ii) the Tarski-Sher logicality requirements on abstraction principles studied by Antonelli \cite{Antonelli2010aa} and Fine~\cite{Fine2002}, and (iii) supervaluational ideas coming out of Hodes' work \cite{Hodes1984, Hodes1990aa, Hodes1991}.
\end{abstract}

\newpage

\tableofcontents

\newpage


\section{Introduction}\label{sec01}

Our topic in this paper is the compatibility of abstraction principles and relative categoricity. The most famous example of an abstraction principle is Hume's Principle, which asserts that the number~of~$X$'s is the same as the number~of~$Y$'s if and only if~$X$ and~$Y$ are equinumerous, where this just means that there is a bijection between the $X$'s and $Y$'s. In this, the ``number~of'' operator is understood to be a type-lowering operator which takes second-order entities and returns first-order entities. Much of the interest in Hume's Principle stems from the fact that it, in conjunction with certain axioms for second-order logic, 
recovers the now standard Peano axiomatization for  arithmetic (\cite{Wright1983} Chapter~4, cf. \cite{Walsh2012aa} \S{2.2} pp. 1688~ff). Wright vivified this in \cite{Wright1998ab} by asking us to consider the perspective of ``Hero,'' who using Hume's Principle  recovers all the laws of pure and applied arithmetic using Frege's definitions for zero, successor, and natural number.


The motivation for relative categoricity is likewise often conveyed by considering the perspective of agents. Parsons in his seminal essay \cite{Parsons1990a} and later book \cite[\S{49}]{Parsons2008}  asks us to consider interlocutors~$h$ and~$c$,  each having access to her own structure satisfying the axioms for arithmetic, but whose access to the other's number structure comes primarily from the other's literal utterances. Initially it might appear that the incompleteness of arithmetic could allow for intractable disagreement: perhaps the which structure agent~$h$ has in mind witnesses the arithmetized version of the consistency of some theory, while the structure agent~$c$ has in mind does not. 
However, Parsons notes that this can't happen so long as each interlocutor can in addition perform mathematical induction on concepts defined in terms of the other's natural number structure. If this is granted, then the map~$\Gamma$ defined by~$\Gamma(0_h)=0_c$ and~$\Gamma(s_h(n))=s_c(\Gamma(n))$ is an isomorphism, wherein~$0_i$ and~$s_i$ denote the zero and successor of agent~$i$'s number structure. Since isomorphisms preserve truth-values, the two agents will agree on all sentences of the pure arithmetical vocabulary in which zero and successor are taken as primitive.

So it's natural to ask whether there is similar agreement when it comes to abstraction principles.  To expand upon Wright's example, we might envision Hero as well as an interlocutor Claudio, and ask whether their agreement upon Hume's Principle leads to an agreement on other numerical truths. Frege famously noted in the \emph{Grundlagen} one potential source of disagreement:  Hero and Claudio might disagree about whether everything is a number, or whether this-or-that object is a number (\cite{Frege1884aa}, \cite{Frege1980} \S{56}, \S{66}). But it's still natural to ask what happens when we restrict attention to each 
agent's \emph{pure numbers}, that is, to the range of each's ``number~of'' operator. One important species of agreement has been stressed in the recent literature under the heading of the principle `Nq'. In terms of our scenario, Hero and Claudio would both think that the number~of~$Y$'s is equal to~$n$, as defined in terms of their respective ``number~of'' operators, if and only if~$\exists^{=n} \; x \; Yx$, wherein this is the exact numerical quantifier defined in the usual first-order way 
(cf. \cite{Hale1987aa} pp.~223-224, \cite{Wright1999} p.~18, \cite{Hale2001} p.~322, \cite{Cook2007aa} p.~32, \cite{Walsh2014aa}  p. 92).

It turns out that this agreement extends to the truth-values of \emph{all} pure 
numerical statements, where again we understand by ``pure'' the restriction to the  range of each  ``number~of'' operator. For, our interlocutors, just like Parsons', have a natural way of translating between their individual pure number discourse.  Designating Hero's ``number~of'' with ``$\#_h$'' and Claudio's with  ``$\#_c$,'' we see that 
whenever Claudio utters a statement about his pure numbers, we may replace each instance of~$\#_c$ by~$\#_h$ and obtain a truth about Hero's pure numbers. This is due to the fact that the map~$\Gamma(\#_h(X))=\#_c(X)$ is an isomorphism between Hero's and Claudio's ``pure number'' structures, so that we can again appeal to the fact that isomorphisms preserve the relevant truth-values. So not only will Hero and Claudio agree about all the truths of number theory, they will agree also, for instance, that the number~of evens is the same as the number~of natural numbers. 

This result about Hume's Principle was proven in the earlier paper \cite{Walsh2012aa} (Proposition 14 p. 1687).
However, this earlier work left open the question of whether this phenomena persists when one considers other abstraction principles. For, Wright and Hale \cite{Hale2001} have emphasized that Hume's Principle is just one abstraction principle amongst many. Other principles can be obtained by replacing the  equinumerosity relation with another equivalence relation on second-order entities and by introducing a new type-lowering operator for each such equivalence relation. So an \emph{abstraction principle} is a principle of the following form:
\begin{myequation}\label{eqn:AE}
A[E]: \hspace{10mm} \forall \; X, Y \; (\partial(X)=\partial(Y) \leftrightarrow E(X,Y))
\end{myequation}
In this, the quantifiers range over second-order entities and the operator~$\partial$ takes second-order entities and returns first-order entities. Moreover, the type-lowering operator $\partial$ is understood to depend on the equivalence relation~$E$. The case of the two agents is thus well-formalized by the following principle:
\begin{myequation}
A^2[E]: \hspace{5mm} \forall \; X, Y \; [(\partial_1(X)=\partial_1(Y) \leftrightarrow E(X,Y)) \wedge (\partial_2(X)=\partial_2(Y) \leftrightarrow E(X,Y))] \label{eqn:AE2}
\end{myequation} 
Let's call the objects in the range of the $\partial_i$-operator \emph{the abstracts} of interlocutor~$i$, and let's denote this by $\mathrm{rng}(\partial_i)$. Then we say that the abstraction principle~$A[E]$ is \emph{naturally relatively categorical} if it can be proved from this principle $A^2[E]$ (and the axioms governing the second-order logic) that the map $\Gamma(\partial_1(X))=\partial_2(X)$ is an isomorphism between the abstracts of the two interlocutors. For a more formal statement of natural relative categoricity, see Definition~\ref{eqn:defn:NRC}, which we provide after carefully setting up the particulars of the background second-order logic.

The present paper answers the question of what abstraction principles are naturally relatively categorical by presenting various equivalent characterizations in terms of sameness of cardinality and invariance under injections. These characterizations then allow us to  ascertain easily whether a given abstraction principle is naturally relatively categorical (see \S\ref{sec04} for examples). Let us build up to the statement of these equivalent characterizations by introducing some key definitions we deploy in this paper. First, let us say that the abstraction principle $A[E]$ is \emph{cardinality coarsening on abstracts} if the following is provable from the abstraction principle and the associated principles of the second-order logic: 
\begin{myequation}\label{eqn:defn:CC}
\forall \; X, Y\; ((Y\approx X \; \& \; X\subseteq \mathrm{rng}(\partial)) \rightarrow E(X,Y))]
\end{myequation}
In this, $Y \approx X$ is an abbreviation for the notion of equinumerosity  operative in Hume's Principle, while $\mathrm{rng}(\partial)$ is the collection of  objects in the range of the type-lowering operator~$\partial$; as above, we  sometimes refer to these as the \emph{abstracts}. Note that in~(\ref{eqn:defn:CC}), the concept $Y$ is not required to be subconcept of $\mathrm{rng}(\partial)$. Further, let us say that the abstraction principle $A[E]$ is \emph{injection invariant on abstracts} if the following is provable from the abstraction principle and the associated principles of the ambient second-order logic:
\begin{myequation}\label{eqn:defn:II}
\forall \; \mbox{ injection } \iota\hspace{-.5mm}:\hspace{-.5mm}V\hspace{-1mm}\rightarrow\hspace{-1mm} V \; \forall \; X\subseteq \mathrm{rng}(\partial) \;E(X,\overline{\iota}(X))
\end{myequation} 
wherein $V=\{x: x=x\}$ is an abbreviation for the concept of all objects and $\overline\iota (X)$ means $\{\iota(x): x \in X\}$. With this terminology in place, our primary characterization of natural relative categoricity can be stated as follows:
\begin{thm}\label{thm:ncr=iia=cca}
The following are equivalent:
\begin{myenumerate}
\item[] 1. The abstraction principle~$A[E]$ is naturally relatively categorical.\label{eqn:RC1}
\item[] 2. The abstraction principle~$A[E]$ is injection invariant on abstracts.\label{eqn:RC2}
\item[] 3. The abstraction principle~$A[E]$ is cardinality coarsening on abstracts.\label{eqn:RC3}
\end{myenumerate}
\end{thm}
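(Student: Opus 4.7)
My plan is to establish the cycle $(3) \Rightarrow (1) \Rightarrow (2) \Rightarrow (3)$. The key observation guiding the whole proof is that, in any model of $A^2[E]$, the map $\Gamma$ defined by $\Gamma(\partial_1(X)) = \partial_2(X)$ is automatically a well-defined bijection from $\mathrm{rng}(\partial_1)$ onto $\mathrm{rng}(\partial_2)$, since both biconditionals in $A^2[E]$ pin $\partial_1(X) = \partial_1(Y)$ and $\partial_2(X) = \partial_2(Y)$ to the common criterion $E(X,Y)$. Consequently, the only substantive content of ``$\Gamma$ is an isomorphism of abstracts'' is the commutation clause $\Gamma(\partial_1(X)) = \partial_2(\overline{\Gamma}(X))$ for all $X \subseteq \mathrm{rng}(\partial_1)$, which by the $\partial_2$-biconditional collapses to the single requirement $E(X, \overline{\Gamma}(X))$. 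This reduction is what makes each direction of the cycle manageable.

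For $(3) \Rightarrow (1)$: working inside a model of $A^2[E]$, fix $X \subseteq \mathrm{rng}(\partial_1)$. Because $\Gamma$ restricted to $X$ is a bijection onto $\overline{\Gamma}(X)$, we have $\overline{\Gamma}(X) \approx X$, and invoking $(3)$ for the operator $\partial_1$ yields $E(X, \overline{\Gamma}(X))$, which by the reduction above is precisely the commutation needed. For $(1) \Rightarrow (2)$: fix a model of $A[E]$, an injection $\iota : V \to V$, and $X \subseteq \mathrm{rng}(\partial)$, and manufacture a model of $A^2[E]$ by keeping $\partial_1 := \partial$ and setting $\partial_2(Y) := \iota(\partial(Y))$. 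Injectivity of $\iota$ gives $\partial_2(Y) = \partial_2(Z) \leftrightarrow E(Y,Z)$, so $A^2[E]$ holds, and the induced $\Gamma$ is precisely $\iota$ restricted to $\mathrm{rng}(\partial_1)$; applying $(1)$ and unwinding the commutation clause then delivers $E(X, \overline{\iota}(X))$.

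For $(2) \Rightarrow (3)$: given $X \subseteq \mathrm{rng}(\partial)$ and a bijection $f : X \to Y$ witnessing $Y \approx X$, the aim is to extend $f$ to an injection $\iota : V \to V$ and then invoke $(2)$ to conclude $E(X, \overline{\iota}(X)) = E(X, Y)$. This extension step is where I expect the main obstacle to lie: producing $\iota$ amounts to exhibiting an injection $V \setminus X \to V \setminus Y$, which is automatic whenever $|X| < |V|$ (both complements then have cardinality $|V|$), but the case $|X| = |Y| = |V|$ is more delicate. I expect the handling of that case to rest either on the background strength of the ambient second-order logic, or on routing between $X$ and $Y$ via an auxiliary concept $Z \subseteq \mathrm{rng}(\partial)$ of the same cardinality, using two applications of $(2)$ together with the fact that $E$ is an equivalence relation.
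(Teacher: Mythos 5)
Your reduction of ``the natural bijection is an isomorphism'' to the single clause $E(X,\overline{\Gamma}(X))$ for $X\subseteq\mathrm{rng}(\partial_1)$ is exactly the paper's Proposition~\ref{prop:simpleequivalent}, and your directions $(3)\Rightarrow(1)$ and $(1)\Rightarrow(2)$ are correct and essentially the paper's own arguments (modulo the bookkeeping that the graph of $\iota$ must lie in $S_2[M]$ so that $\partial_2=\iota\circ\partial$ is definable and the expanded structure still satisfies comprehension).

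The gap sits exactly where you predicted, in $(2)\Rightarrow(3)$ when $\left|X\right|=\left|Y\right|=\left|V\right|$, and neither of your two proposed escapes closes it. ``Background strength'' is not the issue; the obstruction is combinatorial. Take $M=\omega$, $\mathrm{rng}(\partial)$ the evens, $X$ the evens, $Y=V$: there is no injection $V\setminus X\rightarrow V\setminus Y=\emptyset$, so no injection $\iota:V\rightarrow V$ with $\overline{\iota}(X)=Y$ exists at all. Routing through an auxiliary $Z\subseteq\mathrm{rng}(\partial)$ fails in the same configuration: to get $E(Z,Y)$ from $(2)$ you must either apply it with source $Y$ --- illegitimate, since clause~(3) deliberately does \emph{not} require $Y\subseteq\mathrm{rng}(\partial)$ --- or apply it with source $Z$ and an injection carrying $Z$ onto $Y$, which again needs an injection $V\setminus Z\rightarrow V\setminus Y=\emptyset$, impossible since $V\setminus Z$ contains the odds. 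The paper's resolution is a different move: since $\left|X\right|=\left|V\right|$ and $X\subseteq\mathrm{rng}(\partial)$ force $\left|\mathrm{rng}(\partial)\right|=\left|V\right|$, choose a bijection $\pi:\mathrm{rng}(\partial)\rightarrow V$ and pass to the new model of $A[E]$ with operator $\partial_2=\pi\circ\partial$, whose range is all of $V$. Injection invariance on abstracts is a \emph{theorem} of $A[E]$ and so holds in this new model, where the restriction to abstracts becomes vacuous; taking bijections $j_1:V\rightarrow X$ and $j_2:V\rightarrow Y$ (viewed as injections of $V$ into itself) yields $E(V,X)$ and $E(V,Y)$, hence $E(X,Y)$ by symmetry and transitivity. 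The conclusion transfers back to the original model because $E$ is an $L_0$-formula and does not mention $\partial$. This ``re-choose the abstraction operator so that everything is an abstract'' device is the missing idea; your treatment of the case $\left|X\right|<\left|V\right|$ (splitting on whether $V$ is finite and using Infinite Sums are Maxs when it is not) otherwise matches the paper's Cases~II and~III.
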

\noindent This theorem is proven in \S{\ref{sec03}. Prior to establishing this theorem, in \S\ref{sec02} we  define the particulars of the second-order logic which we're employing-- in short, we're assuming full impredicative comprehension and strong forms of the axiom of choice. Then in \S\ref{sec02.5} we present the official definition of natural relative categoricity in Definition \ref{eqn:defn:NRC}. 

As mentioned above, Frege observed that abstraction principles like Hume's Principle don't determine whether or not everything is an abstract. 
The idea behind the natural relative categoricity of Hume's Principle is that this is the only kind of statement-- expressible in the language of Hume's Principle-- whose truth-value is left undetermined by Hume's Principle. So in natural relative categoricity we restrict attention down to the abstracts. A complementary idea is to restrict attention to the case where the abstraction operator is assumed to be a surjective map from concepts to objects. To this end, let us call an abstraction principle $A[E]$ \emph{surjectively relatively categorical} 
if it can be proved (in the background logic) from   $A^2[E]$, and the claim that $\partial_1$ and $\partial_2$ are surjective, that the map $\Gamma(\partial_1(X))=\partial_2(X)$ is an isomorphism between the abstracts of the two interlocutors. 
In analogy with our first main Theorem~\ref{thm:ncr=iia=cca}, our second main theorem establishes the following equivalent characterization of surjective relatively categorical abstraction principles:
\begin{thm}\label{thm:coverthm}
The following are equivalent:
\begin{myenumerate}
\item[] 1. The abstraction principle $A[E]$ is surjectively relatively categorical.
\item[] 2. The abstraction principle $A[E]$ is permutation invariant, 
under the assumption that the abstraction operator is surjective.
\item[] 3. The abstraction principle $A[E]$ is bicardinality coarsening, under the assumption that the abstraction operator is surjective.
\end{myenumerate}
\end{thm}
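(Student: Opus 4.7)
The plan is to mirror the structure of the proof of Theorem~\ref{thm:ncr=iia=cca}, modifying each implication to reflect the surjectivity hypothesis. Surjectivity collapses $\mathrm{rng}(\partial)$ to the full universe $V$, so the ``on abstracts'' restrictions in the earlier characterizations become vacuous; what remains substantive is that the analogous conditions must be strengthened. A mere injection $\iota\colon V \to V$ is no longer automatically a permutation, and the distinguishing feature of a permutation is that it preserves cardinalities of complements as well. This is precisely why injection invariance must be strengthened to permutation invariance, and why cardinality coarsening must be strengthened to bicardinality coarsening (the condition that $X \approx Y$ and $V \setminus X \approx V \setminus Y$ together suffice for $E(X,Y)$).

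For $(3) \Rightarrow (2)$, given a permutation $\pi$ the map $\pi$ itself witnesses both $X \approx \overline\pi(X)$ and $V \setminus X \approx V \setminus \overline\pi(X)$, so bicardinality coarsening immediately delivers $E(X, \overline\pi(X))$. For the converse $(2) \Rightarrow (3)$, given $X, Y$ with $X \approx Y$ and $V \setminus X \approx V \setminus Y$, one stitches the two witnessing bijections into a single permutation $\pi$ of $V$ with $\overline\pi(X) = Y$ and applies permutation invariance. For $(2) \Rightarrow (1)$, given surjective $\partial_1, \partial_2$ satisfying $A^2[E]$, the map $\Gamma$ defined by $\Gamma(\partial_1(X)) = \partial_2(X)$ is immediately a well-defined bijection of $V$ with itself; permutation invariance applied to $\Gamma$ then yields $E(X, \overline\Gamma(X))$ and hence $\partial_2(X) = \partial_2(\overline\Gamma(X))$, supplying the remaining structure preservation needed to promote $\Gamma$ to an isomorphism in the sense required by surjective relative categoricity.

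The main obstacle is $(1) \Rightarrow (2)$, which requires constructing a second abstraction operator tailored to a given permutation $\pi$. The natural choice is $\partial_2(X) := \pi(\partial_1(X))$: since $\pi$ is a bijection, $\partial_2$ is a surjective operator, and the equivalence $\partial_2(X) = \partial_2(Y) \Leftrightarrow E(X,Y)$ reduces, by the injectivity of $\pi$, to the corresponding equivalence for $\partial_1$. Applying surjective relative categoricity to the pair $(\partial_1, \partial_2)$ yields an isomorphism $\Gamma$, which by construction coincides with $\pi$ on $V = \mathrm{rng}(\partial_1)$; unpacking the isomorphism clause then reads off $\partial_1(X) = \partial_1(\overline\pi(X))$, i.e., $E(X, \overline\pi(X))$, as desired. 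Throughout the argument, one must verify that the constructions involving permutations and pieced-together bijections are formalizable in the chosen background second-order logic with full comprehension and choice, as set out in Section~\ref{sec02}.
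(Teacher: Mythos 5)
Your proposal is correct and follows essentially the same route as the paper's own proof: the cycle of implications, the construction $\partial_2 = \pi\circ\partial_1$ for $(1)\Rightarrow(2)$, the observation that surjectivity makes the natural bijection a permutation of $V$ for $(2)\Rightarrow(1)$, and the stitching of bijections for the equivalence of $(2)$ and $(3)$ all match the paper. No gaps.
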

\noindent In analogy to injection invariance on abstracts~(\ref{eqn:defn:II}), we say that $A[E]$ is \emph{permutation invariant} if
\begin{myequation}\label{eqn:defn:piI}
\forall \; \mbox{ bijection } \pi\hspace{-.5mm}:\hspace{-.5mm}V\hspace{-1mm}\rightarrow\hspace{-1mm} V \;\; \forall \; X \;E(X,\overline{\pi}(X))
\end{myequation}
Further, in analogue to cardinality coarsening on abstracts~(\ref{eqn:defn:CC}), we say that $A[E]$ is \emph{bicardinality coarsening} if
\begin{myequation}\label{eqn:defn:biC}
\forall \; X, Y\; (Y\approx X \; \& \; V\hspace{-.5mm}\setminus\hspace{-.5mm}X\approx V\hspace{-.5mm}\setminus\hspace{-.5mm}Y) \rightarrow E(X,Y)
\end{myequation}
In this, $V\setminus X$ denotes the concept of all objects which are not in $X$. It's worth emphasizing that in the statement of Theorem~\ref{thm:coverthm}, the 
last two conditions occur under the hypothesis that the abstraction operator~$\partial$ is surjective: for every object~$b$ there is a concept~$F$ such that $\partial(F)=b$. Hence there is no restriction to abstracts in the formulations of permutation invariance~(\ref{eqn:defn:piI}) and bicardinality coarsening~(\ref{eqn:defn:biC}). 
As with our earlier theorem, Theorem~\ref{thm:coverthm} is proven in  \S{\ref{sec03};  the formal definition of surjective relative categoricity is given in Definition~\ref{eqn:defn:NRC:covered}, subsequent to our treatment of the background second-order logic in the next section.

The aim of our Theorem~\ref{thm:ncr=iia=cca} and Theorem~\ref{thm:coverthm} is to answer the question of what abstraction principles are relatively categorical in the specified senses, and in the coarsest of terms these theorems indicate that such abstraction principles look a lot like Hume's Principle, so that relative categoricity amongst abstraction principles is the exception rather than the rule. This obviously doesn't directly imply that there's a problem with appeals to either abstraction principles or relative categoricity considerations. But it indicates that a choice must be made: the advocate of relative categoricity arguments will find her preferred route to determinacy of truth value blocked in the case of most abstraction principles, and the advocate of abstraction principles might be pressed to find some other means by which to secure determinacy of truth value. That said, it's obviously non-trivial to spell out precisely what philosophical concern or question is intended to be assuaged by securing determinacy of truth-value (cf. \cite{Button2014ab}), and there are similarly problems with making out the case that abstraction principles can secure knowledge of our foundational theories of arithmetic and set theory (cf. \cite{Walsh2014aa}). This is not the place to adjudicate these larger philosophical issues. Rather, the aim of this paper is limited to showing that the tools which one segment of the philosophy of mathematics community have been using are largely incompatible with the tools employed by another part of the community. And this despite the fact that both relative categoricity and abstraction principles can be seen as latter-day descendants of the idea that the subject-matter of mathematics is given by implicit definitions of its fundamental concepts (cf. \cite{Hale2000}, \cite{Shapiro2005aa} p. 13, pp. 168-169, \cite{Shapiro1991} p. 190, \cite{Shapiro2000ac} pp. 132 ff).

The present paper is organized as follows. In \S\ref{sec02} we set out the particulars of the background second-order logic which we're employing. Then in \S\ref{sec02.5} we make a preliminary study of the map $\Gamma(\partial_1(X))=\partial_2(X)$, which we there call \emph{the natural bijection}. In the subsequent section \S\ref{sec03} we prove Theorem~\ref{thm:ncr=iia=cca} and Theorem~\ref{thm:coverthm}, as well as note some related results on the underlying equivalence relations. In \S\ref{sec04}, we use these theorems to  determine quickly whether some well-known abstraction principles are 
relatively categorical in the senses  we have adumbrated. In \S\S\ref{sec05stable}-\ref{sec07} we contrast the nature of the requirement of relative categoricity to other constraints on abstraction principles related to invariance and determinacy of truth-value studied by authors such as Cook, Antonelli, Fine, and Hodes. In particular, in \S\ref{sec05stable} we indicate where relatively categorical abstraction principles fit into the stability hierarchy that has arisen in response to the Bad Company problem. In \S\ref{sec05:aldo}, we distinguish our notion of permutation invariance~(\ref{eqn:defn:piI}) from notions studied by Antonelli and Fine and related to the Tarski-Sher thesis on logicality. Finally, in \S\ref{sec05:hodes} we note that the determinacy of truth value ideas coming out of our notions of relative categoricity might be orthogonal to the determinacy ideas coming out of Hodes' supervaluationism (cf. Question~\ref{eqn:Q2}).

\section{Background Second-Order Logic}\label{sec02}

We work in a background second-order signature which contains a sort for objects as well as a sort for~$n$-ary relations for each~$n\geq 1$. Objects are written with lower-case roman letters~$a,b,c,d,x,y,z,\ldots$.  The unary relations are called \emph{concepts} and written with upper-case roman letters~$A,B,C,X,Y,Z$, while~$n$-ary relations for~$n>1$ are written with upper-case roman letters~$R,S$. The predication relation is written~$Xa$ or~$a\in X$ for objects~$a$ and concepts~$X$; and it is typically written~$R(a_1, \ldots, a_n)$ for~$n$-ary relations. For the sake of definiteness, let us then stipulate:
\begin{defn}\label{defn:L000}
The background second-order signature $L_0$ is the many-sorted signature which consists merely of (i) sorts for objects and $n$-ary relations for each $n\geq 1$, and (ii) for each $n\geq 1$, the $(n+1)$-ary predication relations $R(x_1, \ldots, x_n)$ wherein $R$ is an $n$-ary relation and $x_1, \ldots, x_n$ are objects.
\end{defn}
\noindent The models of $L_0$ thus have the following form:
\begin{myequation}\label{eqn:defn:models}
\mathcal{M}=(M, S_1[M], S_2[M], \ldots)
\end{myequation}
wherein~$M$ is a non-empty set and~$S_1[M]\subseteq P(M^n)$, and wherein the predication relations are interpreted with the membership relation from the ambient set-theoretic metatheory. Often in what follows we will be discussing isomorphisms of $L_0$-structures and related expansions. In this connection, it's useful to explicitly note that $L_0$ itself does not contain any constant or function symbols and does not contain any relation symbols besides the predication relation symbols.

Suppose that~$L$ is an expansion of~$L_0$.  An $L$-structure~$\mathcal{M}$ whose $L_0$-reduct is written as in~(\ref{eqn:defn:models}) is called \emph{standard} if~$S_n[M]=P(M^n)$; we do \emph{not} assume here that all structures are standard. The \emph{full comprehension schema} for concepts in~$L$ is the collection of all the following axioms:
\begin{myequation}\label{eqn:fullcomp}
\exists \; X \; \forall \; x \; (Xx \leftrightarrow \Phi(x))
\end{myequation}
wherein~$\Phi(x)$ is an~$L$-formula and~$X$ does not appear free in~$\Phi(x)$, but where~$\Phi(x)$ may contain other free variables which are reserved for parameters. There are similar comprehension schemas for the~$n$-ary relations. 

In this paper, it is assumed that all theories contain the full comprehension schema in their signature for~$n$-ary relations for all~$n\geq 1$. Our primary motivation for working with the full comprehension schema in this paper is that it is presupposed by the statement of natural relative categoricity, as we will make clear in the formal presentation of this notion below. The comprehension schema allows us to use usual boolean connectives~$A\cap B$,~$A\cup B$,~$A\setminus B$,~$A\subseteq B$ with their usual meanings on both concepts and~$n$-ary relations for~$n>1$. Likewise, we use $A\times B$ to denote the binary concept consisting of pairs $(a,b)$ where $a$ is from $A$ and $b$ is from $B$. Sometimes in what follows we use the disjoint union notation~$A=B\sqcup C$, which of course just means that~$A=B\cup C$ and~$B\cap C=\emptyset$. In this paper,~$\emptyset$ and~$V$ are reserved for the concept of no objects and the concept of all objects, respectively:
\begin{myequation}\label{eqn:defnV}
\emptyset = \{x: x\neq x\}, \hspace{10mm} V = \{x:x=x\}
\end{myequation}

In what follows, we often employ various abbreviations for formulas in second-order logic. In particular, we use bracket notation~$\{x: \Phi(x)\}$ as short-hand for the unique concept~$X$ determined by~$\Phi(x)$ from the comprehension axiom in equation~(\ref{eqn:fullcomp}), and likewise we write~$\{(x_1, \ldots, x_n): \Phi(x_1, \ldots, x_n)\}$ in the case of~$n$-ary relations. 
Functions are identified with their graphs, so that~$G:A\rightarrow B$ is an abbreviation for the claim that the binary relation~$G$ is such that for all~$a$ from~$A$ there is unique~$b$ from~$B$ with~$G(a,b)$. Likewise we have the following abbreviations for cardinality-related notions:
\begin{myenumerate}
\item ~$X\approx Y$ or~$\left|X\right|=\left|Y\right|$ is an abbreviation for there being a bijection~$F:X\rightarrow Y$.\label{eqn:cardinalityabb:1}
\item $\left|X\right|\leq \left|Y\right|$ is an abbreviation for there being an injection~$F:X\rightarrow Y$. \label{eqn:cardinalityabb:2}
\item $\left|X\right|<\omega$ is an abbreviation for the claim that~$X$ is Dedekind-finite, i.e. any injection~$F:X\rightarrow X$ is also a surjection; and we abbreviate~$\left|X\right|\geq \omega$ for its negation.\label{eqn:cardinalityabb:3}
\end{myenumerate}
Another abbreviation which we shall employ repeatedly in what follows pertains to images of concepts under maps on objects. Suppose that $f:X\rightarrow Y$ is a map and suppose $X_0\subseteq X$. Then we define the image of $X_0$ under $f$ as $\overline{f}(X_0)=\{f(x): x\in X_0\}$, which is a concept by full comprehension when $f\in S_2[M]$. Typically we apply this in the case where $f:V\rightarrow V$, so that $\overline{f}(X)$ is defined for any concept $X$. 

The other definitions that we need in order to state our results are two forms of the axiom of choice. The first form is designated as ${\tt AC}$, and it is the following schema:
\begin{myequation}\label{eqn:AC}
[\forall \; \overline{x} \; \exists \; R^{\prime} \; \varphi(R^{\prime},\overline{x})]\rightarrow \exists \; R \; [\forall \; \overline{x} \; \varphi(R[\overline{x}],\overline{x})]
\end{myequation}
wherein $R[\overline{x}]=\{\overline{y}: R\overline{x}\overline{y}\}$, which exists by full comprehension. Intuitively this says that if for every $n$-tuple of objects $\overline{x}$ there is an $m$-ary concept $R^{\prime}$ witnessing the condition $\varphi(R^{\prime},\overline{x})$, then there is an $(n\mbox{+}m)$-ary concept $R$ such that for all $n$-tuples $\overline{x}$ the $m$-ary concept $R[\overline{x}]$ is a witness. More intuitively still, this version of the axiom of choice says that if for each object there is a concept satisfying a certain condition, then there is a uniform way to select these concepts. This version of the axiom of choice was used frequently in the earlier papers (\cite{Walsh2012aa} Definition 5 p. 1683, \cite{Walsh2014ac}) since in the setting of limited comprehension it is a natural component of a sufficient condition for the so-called $\Delta^1_1$-comprehension schema. 

The other form of the axiom of choice that we employ is a form of global choice. Suppose that~$T$ is a theory in one of our signatures. Then we let~$T\mbox{+}{\tt GC}$ be the expansion of~$T$ by a new binary relation symbol~$<$ on objects in the signature, with axioms saying that~$<$ is a linear order of the first-order objects, and we additionally have a schema in the expanded signature saying that any instantiated formula~$\varphi(x)$ in the expanded signature, perhaps containing parameters, that holds of some first-order object~$x$ will hold of a~$<$-least element: 
\begin{myenumerate}
\item~$[\exists \; x \; \varphi(x)]\rightarrow [\exists \; x \; \varphi(x) \; \& \; \forall \; y<x \; \neg \varphi(y)]$ \label{eqn:gcschema}
\end{myenumerate}
Since all our theories~$T$ contain full comprehension~(\ref{eqn:fullcomp}), we have that the graph of~$<$ forms a binary concept in ~$T\mbox{+}{\tt GC}$. Of course the postulated binary relation~$<$ does not necessarily have anything to do with the the usual ``less than'' relation on the natural numbers. This form of global choice was also defined and employed in the paper \cite{Walsh2014ac} where it was likewise designated as ${\tt GC}$. So in contrast to equation~(\ref{eqn:defn:models}), models of our global choice principle ${\tt GC}$ have the form:
\begin{myequation}\label{eqn:defn:models2}
\mathcal{M}=(M, S_1[M], S_2[M], \ldots, <)
\end{myequation}
where $<$ is a linear order on $M$ such that any non-empty $\mathcal{M}$-definable subset has a least element.

It's worth stressing that $L_0$ does \emph{not} include the global well-order (cf. Definition~\ref{defn:L000}). For, the equivalence relations that we will consider will all be $L_0$-formulas, and in \S\ref{sec05:aldo}, we will note that this implies that the equivalence relations are logical in the sense of Tarski-Sher, and this would be not be true if they included the global well-order. However, as far as theories and structures go, in what follows, it is assumed that all theories and structures contain ${\tt AC}$ and ${\tt GC}$. This of course may be taken to hold for  standard structures by recourse to the axiom of choice in the metatheory. Since we are additionally assuming full comprehension, our models in effect look and act a lot like standard models. However, the advantage of working with arbitrary models of these axioms-- instead of restricting ourselves to the standard models-- is that we have the the benefits of the completeness theorem (cf. \cite{Enderton2001}~Chapter~4, \cite{Manzano1996}~Chapter~VII.2). So even though we are working model-theoretically, everything can in principle be turned into a concrete deduction by recourse to this theorem. Because  the semantics for second-order logic are a contentious affair 
(cf. \cite{Linnebo2011aa} for overview), and because relative categoricity arguments have been traditionally motivated by concerns with the standard semantics for second-order logic (\cite{Parsons2008} p. 270, \cite{McGee1997aa} pp. 45-47, \cite{Lavine1999aa} p. 5), it is useful to adopt a framework in which  our results do not depend  on the choice of semantics for second-order logic.

Our use of ${\tt AC}$~(\ref{eqn:AC}) is rather limited in this paper: we appeal to it to establish the implication recorded in Figure~\ref{diagram1} in \S\ref{sec03} which allows us to go from conditions on an equivalence relation to conditions on the associated abstraction principle; and we appeal to it a final time to treat notions of finiteness in our discussion of the examples in \S\ref{sec04}. As for global choice ${\tt GC}$~(\ref{eqn:gcschema}), the reason why we assume it in this paper is that it permits us to transfer the usual properties of cardinal arithmetic to our deductive setting. In particular, in what follows we make use of the following three properties:
\begin{myenumerate}
\item \emph{Cardinal Comparability} :~$\forall \; X,Y \; (\left|X\right|\leq \left|Y\right| \vee \left|Y\right|\leq \left|X\right|)$\label{eqn:CC}
\item \emph{Infinite Sums are Maxs}:~$\forall \; X \; \left|X\right|\geq \omega \rightarrow [\forall \; Y, Z \; (X=Y\sqcup Z \rightarrow \left|X\right|=\max\{\left|Y\right|, \left|Z\right|\})]$\label{eqn:InfSumMax}
\item \emph{Infinite Products are Maxs}:~$\forall \; Y,Z \; (\left|Y\right|\geq \omega \vee \left|Z\right|\geq \omega)\rightarrow \left|Y\times Z \right| = \max\{\left|Y\right|, \left|Z\right|\}$\label{eqn:InfProdMax}
\end{myenumerate}
Of course, by Cardinality Comparability~(\ref{eqn:CC}), the maximum expressions make good sense. For instance, the clause~$\left|X\right|=\max\{\left|Y\right|, \left|Z\right|\}$ is just an abbreviation for the following conjunction of conditionals, and Cardinality Comparability~(\ref{eqn:CC}) implies that one of the two antecedents is satisfied:
\begin{myequation}
(\left|Y\right|\geq \left|Z\right| \rightarrow \left|X\right|= \left|Y\right|) \wedge (\left|Z\right|\geq \left|Y\right| \rightarrow \left|X\right|= \left|Z\right|)
\end{myequation}
Note that Cardinality Comparatibility~(\ref{eqn:CC}) follows from global choice ${\tt GC}$: for, the global well-order restricted to any two concepts yields two well-orders, and we can then use the traditional proof that well-orders are either order-isomorphic or one is isomorphic to an initial segment of the other (cf. \cite{Hrbacek1999aa} p. 105, \cite{Kunen1980} p. 15); and this result trivially implies that each is comparable to the other in terms of cardinality as well. Similarly, we can transfer the usual proof of Infinite Sums are Maxs and Infinite Products are Maxs (\ref{eqn:InfSumMax})-(\ref{eqn:InfProdMax}) as expressed in the language of set theory to our framework for second-order logic, since this proof proceeds by transfinite induction, which we can emulate with our global well-order (cf. \cite{Hrbacek1999aa} pp. 134 ff, \cite{Kunen1980} p. 29, \cite{Kunen2011aa} p. 73).

Another use of the global well-order $(V,<)$ that we employ is in giving an equivalent characterization of Dedekind-finiteness~(\ref{eqn:cardinalityabb:3}). First, let's introduce the following notation for the initial segments: 
$I_a=\{b: b<a\}$ and $\overline{I}_a=\{b: b\leq a\}$. Further, since~$(V,<)$ is a non-empty well-order, it has a least element, which we designate as zero or~$0$. There's also a natural partial successor function~$s$ defined as follows:
\begin{myequation}\label{eqn:defnsucconord}
s(a)=\min_{<}(V\setminus \overline{I}_a)=\min_<\{b: b>a\}
\end{myequation}
This function~$s$ might be partial because there might be a greatest element in well-order\; $(V,<)$. Finally, let's say that a \emph{limit point} in~$(V,<)$ is a point~$a>0$ such that~$b<a$ implies~$s(b)<a$. As in the theory of ordinals, the well-order~$(V,<)$ splits into zero, successors, and limits. Finally, let's say that $a$ is \emph{finite} if $a$ is strictly below the all the limit points, and let's say that $X$ is \emph{finite} if $X$ is bijective with $I_a$ or $\overline{I}_a$ for some finite $a$. Then one can show using induction that:
\begin{prop}\label{eqn:equivchardedfin}
(i) $X$ is finite if and only if $\left|X\right|<\omega$. (ii) If there is a least limit, and the least limit point is designated as $\omega$, then $X$ is infinite iff there is an injection $\iota:I_{\omega}\rightarrow X$, which happens iff $\left|X\right|\geq \omega$.
\end{prop}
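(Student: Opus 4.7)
The plan is to combine three steps. First, a \emph{representation} step: every concept $X$ is bijective with $I_a$, $\bar{I}_a$, or $V$ itself, obtained by comparing the well-orders $(V,<)$ and $(X,<\restriction X)$. Second, an \emph{induction} along the finite elements of $(V,<)$ showing that $\bar{I}_a$ is Dedekind-finite whenever $a$ is finite. Third, the classical \emph{Hilbert hotel} trick which produces from any injection $I_\omega \hookrightarrow X$ a non-surjective self-injection of $X$, thereby forcing Dedekind-infinitude.

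For the representation step, both $(V,<)$ and $(X,<\restriction X)$ are well-orders by ${\tt GC}$ and full comprehension, so I would transfer the standard well-order comparison theorem (cf.\ \cite{Hrbacek1999aa}, \cite{Kunen1980}) to our deductive setting. The transfinite recursion constructing the order-isomorphism is legitimized by ${\tt GC}$, and the recursively defined isomorphism is captured as a binary concept via full comprehension and ${\tt AC}$. If $(V,<)$ embeds as a proper initial segment of $(X,<\restriction X)$, then $|V| \leq |X| \leq |V|$ (the latter because $X \subseteq V$), so $X \approx V$ via a Schroeder--Bernstein argument; otherwise $X$ is bijective with some $I_a$ or $\bar{I}_a$.

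For the induction, the base $\bar{I}_0 = \{0\}$ is Dedekind-finite. The successor step uses that $s$ preserves finiteness --- by the definition of limit point, $b < \ell$ implies $s(b) < \ell$ --- combined with the standard fact that adjoining one element to a Dedekind-finite set leaves it Dedekind-finite; a minimum-counterexample argument against $(V,<)$ closes it. For the Hilbert hotel, given $\iota : I_\omega \hookrightarrow X$, define $f : X \to X$ to be the identity off $\overline{\iota}(I_\omega)$ and to send $\iota(b) \mapsto \iota(s(b))$ for $b < \omega$; this is an injection which misses $\iota(0)$, so $X$ is Dedekind-infinite. Assembling: the forward direction of (i) is the induction. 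For the backward direction, if $X$ is Dedekind-finite then the representation must produce a finite parameter $a$ --- otherwise $I_\omega$ injects into $X$ (either because the parameter sits at or beyond $\omega$, or because $X \approx V$ and $I_\omega \subseteq V$), contradicting Dedekind-finiteness via the Hilbert hotel. Part (ii) is immediate from the same observations applied at the threshold $\omega$: $X$ is infinite iff the representing segment contains $I_\omega$, iff $I_\omega \hookrightarrow X$, iff $X$ is Dedekind-infinite.

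The main obstacle is the representation step: the classical comparison theorem for well-orders is standard, but its careful transfer to this second-order setting requires checking that the stage-by-stage witnesses of the transfinite recursion can be glued into a binary concept using full comprehension, ${\tt AC}$, and ${\tt GC}$. Once the representation is in hand, the induction and Hilbert hotel arguments are routine.
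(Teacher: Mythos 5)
Your proposal is correct, and it is consistent with the paper's intent: the paper gives no proof of this proposition beyond the remark that one can show it ``using induction,'' and elsewhere (in deriving Cardinal Comparability from ${\tt GC}$) it explicitly licenses transferring the classical well-order comparison theorem to this deductive setting, which is exactly what your representation step needs; the least-counterexample induction and the Hilbert-hotel argument are the expected remaining ingredients. One small point to tidy in the backward direction of (i): the alternative $X\approx V$ does not by itself land you in the infinite case, since if $(V,<)$ has no limit points then $V$ may be Dedekind-finite, in which case $s$ is not total, $V=\overline{I}_m$ for the maximum $m$, and $m$ is (vacuously) finite, so the representation still yields a finite parameter; your appeal to $I_{\omega}\subseteq V$ should therefore be confined to the case where a least limit point actually exists.
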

\noindent Hence, $\left|X\right|\geq \omega$ (as defined in (\ref{eqn:cardinalityabb:3})) aligns extensionally with $\left|I_{\omega}\right|\leq \left|X\right|$ as defined in~(\ref{eqn:cardinalityabb:2}).

Sometimes in what follows we use some standard terminology for describing equivalence relations on a set. So suppose that $E$ is an equivalence relation on a set $P$. Usually in what follows $P$ will be the power set $P(M)$ of some set $M$. Then we use $[X]_E = \{Y\in P: E(X,Y)\}$ as an abbreviation for the $E$-equivalence classes of an element~$X$ of $P$. Further, we use $\nicefrac{P}{E} = \{[X]_E: X\in P\}$ for the set of all equivalence classes.  Finally, representatives for the equivalence classes will be given by any injection $\iota: \nicefrac{P}{E}\rightarrow P$ such that $E(X,\iota([X]_E))$. More generally, often in what follows we shall be interested in the related situation of injections $\iota: \nicefrac{P(M)}{E}\rightarrow M$. As we'll see at the outset of the next section, this is sometimes useful for construction models of abstraction principles. 

\section{The Natural Bijection}\label{sec02.5}

With these preliminary definitions pertaining to the background second-order logic in place, we can now proceed to define what a model of an abstraction principle is. Suppose that~$E(X,Y)$ is an~$L_0$-formula with exactly two free concept variables. Let~$L_0[\partial]$ be the expansion of~$L_0$ by a function symbol~$\partial$ which takes unary concepts as inputs and outputs objects. Then the \emph{abstraction principle~$A[E]$ associated to~$E$} is as in equation~(\ref{eqn:AE}) from the previous section. By abuse of notation, we also use $A[E]$ to refer to the theory consisting of this abstraction principle, the full comprehension schema~(\ref{eqn:fullcomp}), the axiom of choice ${\tt AC}$~(\ref{eqn:AC}), and the global choice schema ${\tt GC}$~(\ref{eqn:gcschema}). Then in contrast to equation~(\ref{eqn:defn:models2}), models of~$A[E]$ have the  form:
\begin{myequation}\label{eqn:ohsaycanyouseee}
\mathcal{M}=(M,S_1[M], S_2[M], \ldots, <,\partial)
\end{myequation}
wherein~$\partial:S_1[M]\rightarrow M$ and wherein~$\mathcal{M}$ models the full comprehension schema~(\ref{eqn:fullcomp}), the axiom of choice~(\ref{eqn:AC}), and the global choice schema~${\tt GC}$~(\ref{eqn:gcschema}). In the case where the structure~$\mathcal{M}$ from~(\ref{eqn:defn:models2}) is standard, if there is an injection $\iota:\nicefrac{P(M)}{E}\rightarrow M$, then one can build a model as in equation~(\ref{eqn:ohsaycanyouseee}) by setting $\partial(X)=\iota([X]_E)$. Of course, there is such an injection if and only if $\left|\nicefrac{P(M)}{E}\right|\leq \left|M\right|$, which is a non-trivial assumption. 

Now consider the case in which there are, within a single model, two abstraction operators which satisfy a given abstraction principle. Again suppose that~$E(X,Y)$ is an~$L_0$-formula with exactly two free concept variables. Let~$L_0[\partial_1, \partial_2]$ be the expansion of~$L_0$ by two function symbols~$\partial_1, \partial_2$ which takes unary concepts as inputs and outputs objects. Then the theory $A^2[E]$ consists of the axiom~(\ref{eqn:AE2}) from \S\ref{sec01}, 
as well as the the full comprehension schema~(\ref{eqn:fullcomp}), the axiom of choice~${\tt AC}$~(\ref{eqn:AC}), and the global choice schema~${\tt GC}$~(\ref{eqn:gcschema}). So models of~$A^2[E]$ have the following form:
\begin{myequation}\label{eqn:1}
\mathcal{M}=(M, S_1[M], S_2[M], \ldots,<, \partial_1, \partial_2)
\end{myequation}
wherein~$\partial_i:S_1[M]\rightarrow M$ and $\mathcal{M}$ models the full comprehension schema~(\ref{eqn:fullcomp}), the axiom of choice~${\tt AC}$~(\ref{eqn:AC}), and the global choice schema~${\tt GC}$~(\ref{eqn:gcschema}).

In the description of natural relative categoricity from the earlier section, one of the key ideas is that we restrict down to the ranges of the individual abstraction operators. Formally, we make this precise by taking a model~$\mathcal{M}$ of~$A^2[E]$ as in equation~(\ref{eqn:1}), and defining the following induced~$L_0[\partial_i]$-structure for $i=1,2$:
\begin{myequation}\label{eqn:induced}
\mathcal{M}_i = (\mathrm{rng}(\partial_i), S_1[M]\cap P(\mathrm{rng}(\partial_i)), S_2[M]\cap P(\mathrm{rng}(\partial_i)^2), \ldots, \partial_i\upharpoonright (S_1[M]\cap P(\mathrm{rng}(\partial_i))))
\end{myequation}
Hence $\mathcal{M}\mapsto \mathcal{M}_1$ and $\mathcal{M}\mapsto \mathcal{M}_2$ are maps from an $L_0[\partial_1, \partial_2]$-structure $\mathcal{M}$ to an $L_0[\partial_i]$-structure $\mathcal{M}_i$. So notationally, $\mathcal{M}_i$ is a structure induced from $\mathcal{M}$, and not simply yet another structure indexed by a subscript. Note that we do \emph{not} include the global-well order $<$ in the signature of the induced structures $\mathcal{M}_i$. This is because our natural relative categoricity concerns isomorphisms between these structures, and we do not want to insist that isomorphisms preserve this global well-order. This 
is because the global well-order is an artifact employed to make various second-order notions like cardinality more like classical metatheoretic notions. But of course since we're reasoning about the induced structures $\mathcal{M}_i$ as defined within the larger structure $\mathcal{M}$, we can use global choice in that setting to reason about the induced structures if we like. This disparity between the induced structures $\mathcal{M}_i$ and the structure $\mathcal{M}$ as regards global choice does not extend to the issue of comprehension. For, the induced structures $\mathcal{M}_i$ models the full comprehension schema~(\ref{eqn:fullcomp}) in its signature simply because they are definable within the structure $\mathcal{M}$ which is assumed to satisfy comprehension in its signature. However, note that in general there is no reason that the induced structure~$\mathcal{M}_i$ need model the abstraction principle~$A[E]$.

The notion of natural relative categoricity was defined in \S\ref{sec01} by the condition that the map $\Gamma(\partial_1(X))=\partial_2(X)$ was an isomorphism. Before further examining the condition that this map is an isomorphism, let us take a first and preliminary step of examining the properties of the map itself, which we call the natural bijection. So given any model~$\mathcal{M}$ of~$A^2[E]$ as in equation~(\ref{eqn:1}), \emph{the natural bijection} $\Gamma$ is the map $\Gamma:\mathrm{rng}(\partial_1)\rightarrow \mathrm{rng}(\partial_2)$ defined by 
\begin{myequation}\label{eqn:defn}
\Gamma(\partial_1(X))=\partial_2(X)
\end{myequation}
wherein~$X$ ranges over elements of~$S_1[M]$. It then follows from the axiom~(\ref{eqn:AE2}) of~$A^2[E]$ that this map is well-defined and injective:
\begin{myequation}
\partial_1(X)=\partial_1(Y) \Longleftrightarrow  E(X,Y) \Longleftrightarrow \partial_2(X)=\partial_2(Y)
\end{myequation}
Trivially, by definition,~$\Gamma: \mathrm{rng}(\partial_1)\rightarrow \mathrm{rng}(\partial_2)$ is surjective and so it is indeed a bijection.  Further, the natural bijection is definable in~$\mathcal{M}$ by the following formula:
\begin{myequation}\label{eqn:defn:natret}
\Gamma(x)=y \Longleftrightarrow \mathcal{M}\models [\exists \; X \; (\partial_1(X)=x \; \& \; \partial_2(X)=y)]
\end{myequation}
So by full comprehension~(\ref{eqn:fullcomp}), the graph of the natural bijection~$\Gamma$ is a member of~$S_2[M]$. Likewise, by full comprehension~(\ref{eqn:fullcomp}), if~$X\in S_1[M]\cap P(\mathrm{rng}(\partial_1))$ then the following is an element of $S_1[M]\cap P(\mathrm{rng}(\partial_2))$:
\begin{myequation}\label{eqn:dafasdfsd}
\overline{\Gamma}(X)=\{\Gamma(x): x\in X\}
\end{myequation}
and similarly for~$n$-ary relations. By abuse of notation, we also use the symbol $\overline{\Gamma}$ to refer to the map~$\overline{\Gamma}:\mathcal{M}_1\rightarrow \mathcal{M}_2$ given by~$\Gamma$ on the objects and~$\overline{\Gamma}$ as in equation~(\ref{eqn:dafasdfsd}) on the~$n$-ary relations for all~$n\geq 1$; and we sometimes also refer to the map $\overline{\Gamma}:\mathcal{M}_1\rightarrow \mathcal{M}_2$ as the natural bijection. This map~$\overline{\Gamma}:\mathcal{M}_1\rightarrow \mathcal{M}_2$ is also trivially an injection since~$\Gamma$ is. As is easily verified, it is a surjection as well. 

So, indeed ~$\overline{\Gamma}:\mathcal{M}_1\rightarrow \mathcal{M}_2$ is also a bijection. For ease of future reference, let's record this in the following definition:
\begin{defn}\label{defn:nat:bijection}
Suppose that $\mathcal{M}$ is a model of $A^2[E]$ as in equation~(\ref{eqn:1}), and that $\mathcal{M}_1, \mathcal{M}_2$ are the induced structures as in equation~(\ref{eqn:induced}). Then the \emph{natural bijection} $\overline{\Gamma}:\mathcal{M}_1\rightarrow \mathcal{M}_2$ is given by the bijection $\Gamma:\mathrm{rng}(\partial_1)\rightarrow \mathrm{rng}(\partial_2)$ defined by $\Gamma(\partial_1(X))=\partial_2(X)$ for each concept $X$ from the ambient structure~$\mathcal{M}$.   Further, $\overline{\Gamma}:\mathcal{M}_1\rightarrow \mathcal{M}_2$ is defined on concepts $X\subseteq \mathrm{rng}(\partial_1)$ from the ambient structure by $\overline{\Gamma}(X)=\{\Gamma(x):x\in X\}$, and similarly for $n$-ary relations.
\end{defn}
\noindent The various appeals to the full comprehension schema~(\ref{eqn:fullcomp}) that we made in the previous paragraph underscore the apparent necessity of the adoption of this schema in the context of the present discussion. For instance, to show that the graph of $\Gamma$ exists as a binary concept, we appealed to its definition in equation~(\ref{eqn:defn:natret}), which is $\Sigma^1_1$. This is precisely the amount of comprehension that one needs to show that Basic Law~V, the abstraction principle of Frege's \emph{Grundgesetze}, is inconsistent (cf. \cite{Walsh2012aa} Proposition 4 p. 1682, Proposition 29 p. 1692). Hence, it seems that studying natural relative categoricity in the context of limited comprehension would not be feasible. Before moving on, it's worth recording one final point in regards to the natural bijection: namely, that a routine argument establishes the following. 

\begin{prop}\label{prop:inverse}
If~$\overline{\Gamma}:\mathcal{M}_1\rightarrow \mathcal{M}_2$ is the natural bijection, then its inverse $\Delta=\Gamma^{-1}$ is the natural bijection~$\overline{\Delta}:\mathcal{M}_2\rightarrow \mathcal{M}_1$.
\end{prop}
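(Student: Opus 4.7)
The plan is to verify the equation $\Delta=\Gamma^{-1}$ by unwinding the defining equation of the natural bijection with the roles of $\partial_1$ and $\partial_2$ swapped. According to Definition~\ref{defn:nat:bijection}, the natural bijection $\overline{\Delta}:\mathcal{M}_2\rightarrow \mathcal{M}_1$ is determined on objects by the map $\Delta:\mathrm{rng}(\partial_2)\rightarrow \mathrm{rng}(\partial_1)$ satisfying $\Delta(\partial_2(X))=\partial_1(X)$ for every concept $X$ from the ambient structure $\mathcal{M}$. So the proposition reduces to checking that the function $\Gamma^{-1}$, which exists on objects because $\Gamma:\mathrm{rng}(\partial_1)\rightarrow \mathrm{rng}(\partial_2)$ has already been shown to be a bijection, satisfies the defining equation for the natural bijection in the opposite direction.

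First I would verify this at the level of objects. By definition, $\Gamma(\partial_1(X))=\partial_2(X)$ for every concept $X$ in $S_1[M]$. Applying $\Gamma^{-1}$ to both sides yields $\partial_1(X)=\Gamma^{-1}(\partial_2(X))$. Hence the object-level map $\Gamma^{-1}:\mathrm{rng}(\partial_2)\rightarrow \mathrm{rng}(\partial_1)$ satisfies precisely the defining equation for the natural bijection from $\mathcal{M}_2$ to $\mathcal{M}_1$. By the well-definedness argument recorded just after equation~(\ref{eqn:defn}), applied now with the indices reversed, this uniquely characterizes $\Delta$ on objects, so $\Delta=\Gamma^{-1}$.

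Next I would lift this to concepts and $n$-ary relations. By Definition~\ref{defn:nat:bijection}, for $X\subseteq \mathrm{rng}(\partial_2)$ from the ambient structure, $\overline{\Delta}(X)=\{\Delta(x):x\in X\}$. Since $\Delta=\Gamma^{-1}$ on objects and $\Gamma$ is a bijection, a routine set-theoretic identity gives $\{\Gamma^{-1}(x):x\in X\}=\overline{\Gamma}^{-1}(X)$, and this is a member of $S_1[M]\cap P(\mathrm{rng}(\partial_1))$ by the same appeal to full comprehension~(\ref{eqn:fullcomp}) used around equation~(\ref{eqn:dafasdfsd}). The analogous computation works for $n$-ary relations. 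Consequently $\overline{\Delta}\circ \overline{\Gamma}$ and $\overline{\Gamma}\circ \overline{\Delta}$ are identity maps on $\mathcal{M}_1$ and $\mathcal{M}_2$ respectively, witnessing that $\overline{\Delta}=\overline{\Gamma}^{-1}$.

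There is no real obstacle here; the entire statement is a bookkeeping verification that the defining equation of the natural bijection is symmetric in the two indices, together with the standard observation that for a bijection the pointwise-lifted inverse agrees with the pointwise-lifted image of the inverse.
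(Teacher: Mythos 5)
Your proposal is correct and is exactly the routine argument the paper has in mind (the paper omits the proof, calling it routine): you check that $\Gamma^{-1}$ satisfies the defining equation $\Delta(\partial_2(X))=\partial_1(X)$ by applying $\Gamma^{-1}$ to both sides of $\Gamma(\partial_1(X))=\partial_2(X)$, and then lift to concepts and $n$-ary relations via the standard identity between the pointwise image under $\Gamma^{-1}$ and the inverse of the pointwise-lifted map. Nothing further is needed.
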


So having defined the natural bijection $\overline{\Gamma}:\mathcal{M}_1\rightarrow \mathcal{M}_2$, let's now examine carefully what it would mean for this to be a isomorphism between the induced structures $\mathcal{M}_1$ and $\mathcal{M}_2$. Recall that if~$L$ is an arbitrary signature, then two~$L$-structures~$\mathcal{N}_1$ and~$\mathcal{N}_2$ are \emph{isomorphic} if there is a bijection~$\gamma:\mathcal{N}_1\rightarrow \mathcal{N}_2$ such that for all~$L$-formulas~$\theta(x_1, \ldots, x_n)$ and~$a_1, \ldots, a_n$ from~$\mathcal{N}_1$, it is the case that
\begin{myequation}\label{eqn:Iso}
\mathcal{N}_1 \models \theta(a_1, \ldots, a_n) \Longleftrightarrow \mathcal{N}_2 \models \theta(\gamma(a_1), \ldots, \gamma(a_n))
\end{myequation}
Of course, this condition is difficult to verify directly, so one usually works with the equivalent condition that equation~(\ref{eqn:Iso}) holds in the case of atomic formulas (cf. \cite{Marker2002} Definition~1.1.3 pp.~8-9 and the proof of Theorem~1.1.10 p. 13, or \cite{Enderton2001} p.~94 and the Homomorphism Theorem part~(c) p.~96). For instance, consider the atomic formula $\theta(x_1, \ldots, x_n,y)\equiv F(x_1, \ldots, x_n)=y$. Suppose that $a_1, \ldots, a_n, b$ are from $\mathcal{N}_1$ and that $F^{\mathcal{N}_1}(a_1, \ldots, a_n)=b$, so that $\mathcal{N}_1\models \theta(a_1, \ldots, a_n,b)$. Then equation~(\ref{eqn:Iso}) implies that $\mathcal{N}_2\models \theta(\gamma(a_1), \ldots, \gamma(a_n),\gamma(b))$, or that $F^{\mathcal{N}_2}(\gamma(a_1), \ldots, \gamma(a_n))=\gamma(b)$, which of course implies that $F^{\mathcal{N}_2}(\gamma(a_1), \ldots, \gamma(a_n))=\gamma(F^{\mathcal{N}_1}(a_1, \ldots, a_n))$. Elementary considerations such as these show that $\gamma:\mathcal{N}_1\rightarrow \mathcal{N}_2$ is an isomorphism if and only if for all relations symbols $R$, constant symbols $c$, and function symbols $F$ in the signature of the structures, and all $a_1, \ldots, a_n$ from $\mathcal{N}_1$, one has
\begin{myeqnarray}
R^{\mathcal{N}_1}(a_1, \ldots, a_n) & \Longleftrightarrow &  R^{\mathcal{N}_2}(\gamma(a_1), \ldots, \gamma(a_n)) \label{eqn:defn:iso:rel} \\
\gamma(c^{\mathcal{N}_1})&  = & c^{\mathcal{N}_2} \label{eqn:defn:iso:con} \\
\gamma(F^{\mathcal{N}_1}(a_1, \ldots, a_n)) & = &  F^{\mathcal{N}_2}(\gamma(a_1), \ldots, \gamma(a_n)) \label{eqn:defn:iso:func}
\end{myeqnarray}
\noindent While these considerations are admittedly elementary, it's worth underscoring them since they help to motivate the definition of natural relative categoricity (Definition~\ref{eqn:defn:NRC}), which we now build towards.

Now consider the natural bijection~$\overline{\Gamma}:\mathcal{M}_1\rightarrow \mathcal{M}_2$ and what it would mean for it to be an isomorphism. Since $\overline{\Gamma}(X)=\{\Gamma(x): x\in X\}$, clearly one has that equation~(\ref{eqn:defn:iso:rel}) always holds in the case of the predication relations, which per the definition of $L_0$ in Definition~\ref{defn:L000} are the \emph{only} relations in the signature of $L_0$. So $\overline{\Gamma}:\mathcal{M}_1\rightarrow \mathcal{M}_2$ is an isomorphism of~$L_0[\partial]$-structures if and only if equation~(\ref{eqn:defn:iso:func}) holds with respect to the operator~$\partial$. That is, the natural bijection ~$\overline{\Gamma}:\mathcal{M}_1\rightarrow \mathcal{M}_2$ is an isomorphism of~$L_0[\partial]$-structures if and only if
\begin{myequation}
X\in (S_1[M]\cap P(\mathrm{rng}(\partial_1))\Longrightarrow \Gamma(\partial_1(X))=\partial_2(\overline{\Gamma}(X))
\end{myequation}
Given the way that the natural bijection~$\Gamma$ was defined in equation~(\ref{eqn:defn}), this happens if and only if 
\begin{myequation}
X\in (S_1[M]\cap P(\mathrm{rng}(\partial_1))\Longrightarrow \partial_2(X)=\partial_2(\overline{\Gamma}(X))
\end{myequation}
which, by the fact that the model~$\mathcal{M}$ from equation~(\ref{eqn:1}) satisfies~$A^2[E]$, holds if and only if 
\begin{myequation}\label{eqn:final}
\mathcal{M} \models [\forall \; X \; (X\subseteq \mathrm{rng}(\partial_1)) \rightarrow E(X, \overline{\Gamma}(X))]
\end{myequation}
Now, the natural bijection $\overline{\Gamma}: \mathcal{M}_1\rightarrow \mathcal{M}_2$ is an isomorphism if and only if its inverse $\Delta=\Gamma^{-1}$ is an isomorphism, and by Proposition~\ref{prop:inverse} its inverse is the natural bijection $\overline{\Delta}:\mathcal{M}_2\rightarrow \mathcal{M}_1$. Hence, by parity of reasoning and the fact that $E$ is an equivalence relation, one has that $\overline{\Gamma}: \mathcal{M}_1\rightarrow \mathcal{M}_2$ is an isomorphism if and only if  
\begin{myequation}\label{eqn:final2}
\mathcal{M} \models [\forall \; Y \; (Y\subseteq \mathrm{rng}(\partial_2)) \rightarrow E(Y, \overline{\Gamma}^{-1}(Y))]
\end{myequation}

For ease of future reference, let us summarize these results as follows. First let's record our official definition of natural relative categoricity:
\begin{defn}\label{eqn:defn:NRC}
An abstraction principle $A[E]$ is \emph{naturally relatively categorical} if all models~$\mathcal{M}$ of~$A^2[E]$ from equation~(\ref{eqn:1}), the natural bijection $\overline{\Gamma}:\mathcal{M}_1\rightarrow \mathcal{M}_2$ from Definition~\ref{defn:nat:bijection} is an isomorphism of the induced structures $\mathcal{M}_1, \mathcal{M}_2$ from equation~(\ref{eqn:induced}).
\end{defn}
\noindent The only way in which this formalization of the notion is more precise than the descriptions of this notion given in \S\ref{sec01} is that now we have formally defined the particulars of our background second-order logic and have likewise defined the natural bijection and indicated precisely what it takes for it to be a isomorphism. 

The elementary considerations from the previous paragraphs give us a simple equivalent characterization of natural relative categoricity. In particular, we have:
\begin{prop}\label{prop:simpleequivalent} An abstraction principle $A[E]$ is naturally relatively categorical if and only if all models~$\mathcal{M}$ of~$A^2[E]$ from equation~(\ref{eqn:1}) satisfy one of the two following equivalent conditions:
\begin{myequation} 
\mathcal{M} \models [\forall \; X \; (X\subseteq \mathrm{rng}(\partial_1)) \rightarrow E(X, \overline{\Gamma}(X))] \tag{\ref{eqn:final}}
\end{myequation}\vspace{-10mm}
\begin{myequation}
\mathcal{M} \models [\forall \; Y \; (Y\subseteq \mathrm{rng}(\partial_2)) \rightarrow E(Y, \overline{\Gamma}^{-1}(Y))]\tag{\ref{eqn:final2}}
\end{myequation}
wherein $\Gamma$ is the natural bijection (cf. Definition~\ref{defn:nat:bijection}).
\end{prop}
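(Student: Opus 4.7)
The plan is essentially to collect the derivations already laid out in the discussion preceding the statement. The proposition asserts an equivalence between natural relative categoricity (which by Definition~\ref{eqn:defn:NRC} demands that $\overline{\Gamma}:\mathcal{M}_1\to\mathcal{M}_2$ be an $L_0[\partial]$-isomorphism for every model $\mathcal{M}$ of $A^2[E]$) and each of two explicit first-order-in-second-order-logic conditions. So I would proceed by applying the standard characterization of isomorphisms in terms of the atomic clauses (\ref{eqn:defn:iso:rel})--(\ref{eqn:defn:iso:func}), specialized to the minimal signature $L_0[\partial]$.

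First I would note that by Definition~\ref{defn:L000} the signature $L_0$ has no constant or function symbols and only the predication relations, so in $L_0[\partial]$ the only clauses that need checking are (\ref{eqn:defn:iso:rel}) for predication and (\ref{eqn:defn:iso:func}) for the function symbol $\partial$. Because $\overline{\Gamma}$ was defined pointwise from $\Gamma$ on concepts (and likewise on $n$-ary relations), predication is preserved automatically: for $x_1,\ldots,x_n\in\mathrm{rng}(\partial_1)$ and $R\in S_n[M]\cap P(\mathrm{rng}(\partial_1)^n)$, one has $R(x_1,\ldots,x_n)$ iff $\overline{\Gamma}(R)(\Gamma(x_1),\ldots,\Gamma(x_n))$. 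Hence $\overline{\Gamma}$ is an isomorphism iff clause (\ref{eqn:defn:iso:func}) holds for $\partial$; that is, iff $\Gamma(\partial_1(X))=\partial_2(\overline{\Gamma}(X))$ for every $X\in S_1[M]\cap P(\mathrm{rng}(\partial_1))$. Using the defining equation $\Gamma(\partial_1(X))=\partial_2(X)$ from Definition~\ref{defn:nat:bijection}, this reduces to $\partial_2(X)=\partial_2(\overline{\Gamma}(X))$, which by the biconditional in $A^2[E]$ is equivalent to $E(X,\overline{\Gamma}(X))$. Quantifying over such $X$ yields condition~(\ref{eqn:final}). Along the way I would note that full comprehension~(\ref{eqn:fullcomp}) is what guarantees that $\overline{\Gamma}(X)$ really is an element of $S_1[M]$, so that the condition is well-formed.

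For the equivalence with (\ref{eqn:final2}), I would invoke Proposition~\ref{prop:inverse}: the inverse $\overline{\Delta}:\mathcal{M}_2\to\mathcal{M}_1$ of the natural bijection is itself the natural bijection associated with the swapped pair $(\partial_2,\partial_1)$, and $\overline{\Gamma}$ is an isomorphism iff $\overline{\Delta}$ is. Applying the first half of the argument to $\overline{\Delta}$ then produces exactly condition~(\ref{eqn:final2}), with the symmetry of $E$ (as an equivalence relation) absorbing the change of direction. There is no real obstacle: the proposition is purely a bookkeeping consequence of the text immediately preceding it, and the only mildly delicate point is keeping track of which concepts belong to which $S_n[M]$, which is handled uniformly by full comprehension.
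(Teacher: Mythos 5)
Your proposal is correct and follows essentially the same route as the paper: the text preceding the proposition reduces the isomorphism condition to the functional clause for $\partial$ (predication being preserved automatically), rewrites it via the defining equation of $\Gamma$ and the biconditional in $A^2[E]$ to obtain~(\ref{eqn:final}), and obtains~(\ref{eqn:final2}) by passing to the inverse via Proposition~\ref{prop:inverse}. No gaps.
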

\noindent Expressed in these terms, natural relative categoricity is patently a deductive property of the theory~$A^2[E]$.

In the next section we'll prove Theorem~\ref{thm:ncr=iia=cca} which gives a characterization of natural relative categoricity in terms of cardinality coarsening on abstracts~(\ref{eqn:defn:CC}) and injection invariance on abstracts~(\ref{eqn:defn:II}). In the previous section we've formally defined our background second-order logic and so we can be a bit more precise now about the content of these conditions. Recall that $A[E]$ may be used as the abbreviation for the theory consisting of the abstraction principle~(\ref{eqn:AE}) in addition to the full comprehension schema~(\ref{eqn:fullcomp}), the axiom of choice~(\ref{eqn:AC}), and the global choice schema~(\ref{eqn:gcschema}). Then officially, we say that $A[E]$ is \emph{cardinality coarsening on abstracts} if the following is a theorem of $A[E]$:
\begin{myequation}
\forall \; X, Y\; ((Y\approx X \; \& \; X\subseteq \mathrm{rng}(\partial)) \rightarrow E(X,Y))]\tag{\ref{eqn:defn:CC}}
\end{myequation}
Likewise, officially $A[E]$ is \emph{injection invariant on abstracts} if the following is a theorem of $A[E]$:
\begin{myequation}
\forall \; \mbox{ injection } \iota:V\hspace{-1mm}\rightarrow\hspace{-1mm}V \; \forall \; X\subseteq \mathrm{rng}(\partial) \;E(X,\overline{\iota}(X))\tag{\ref{eqn:defn:II}}
\end{myequation} 
So both cardinality coarsening on abstracts and injection invariance on abstracts are, by definition, deductive properties of the theory $A[E]$. By contrast, as was made clear by Proposition~\ref{prop:simpleequivalent} of the previous paragraph, natural relative categoricity is a deductive property of the theory $A^2[E]$. So one of the implications of Theorem~\ref{thm:ncr=iia=cca}, which establishes the equivalence of these notions, is that we're able to further reduce natural relative categoricity to a deductive property of the theory $A[E]$ as opposed to $A^2[E]$.

Before setting up the particulars of our second main theorem, let's record for reference when an isomorphism $\overline{H}:\mathcal{M}_1\rightarrow \mathcal{M}_2$ is equal to the natural bijection $\overline{\Gamma}: \mathcal{M}_1\rightarrow \mathcal{M}_2$. It's natural to focus attention on those isomorphisms $\overline{H}:\mathcal{M}_1\rightarrow \mathcal{M}_2$ whose restriction $H\upharpoonright \mathrm{rng}(\partial_1)$ is an element of $S_2[M]$, since it is only with respect to these that we can define further elements of $\mathcal{M}$ in terms of $\overline{H}$ and $H$ by recourse to the comprehension schema~(\ref{eqn:fullcomp}). By a routine argument, we can establish the following: 

\begin{prop}\label{prop:whenarbirisnat}
Suppose that $\mathcal{M}$ is a model of $A^2[E]$ with induced structures $\mathcal{M}_1$ and $\mathcal{M}_2$. Suppose that $\overline{H}:\mathcal{M}_1\rightarrow \mathcal{M}_2$ is a map whose restriction $H\upharpoonright \mathrm{rng}(\partial_1)$ to $\mathrm{rng}(\partial_1)$ is an element of $S_2[M]$. Then $\overline{H}:\mathcal{M}_1\rightarrow \mathcal{M}_2$ is isomorphism if and only if one has $\mathcal{M}\models [\forall \; X \; (X\subseteq \mathrm{rng}(\partial_1)\rightarrow E(X, \overline{H}(X))]$.
\end{prop}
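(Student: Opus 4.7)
The plan is to argue in direct analogy with the derivation of Proposition~\ref{prop:simpleequivalent} for the natural bijection $\overline{\Gamma}$ that was carried out in the paragraphs preceding it, substituting the given map $\overline{H}$ throughout. First I would observe that since $\overline{H}$ acts on concepts by $\overline{H}(X) = \{H(x) : x \in X\}$, the atomic preservation condition~(\ref{eqn:defn:iso:rel}) for the predication relations of $L_0$ is automatic, and the condition~(\ref{eqn:defn:iso:con}) is vacuous because $L_0[\partial]$ contains no constant symbols. Hence $\overline{H}$ is an isomorphism of the $L_0[\partial]$-structures $\mathcal{M}_1$ and $\mathcal{M}_2$ if and only if the function preservation clause~(\ref{eqn:defn:iso:func}) for the operator $\partial$ holds, namely
\[ H(\partial_1(X)) = \partial_2(\overline{H}(X)) \quad \text{for every } X \in S_1[M] \cap P(\mathrm{rng}(\partial_1)). \]

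Second, I would invoke $A^2[E]$ to convert this function preservation equation into the desired condition $E(X, \overline{H}(X))$. The axiom $A^2[E]$ applied to the operator $\partial_2$ gives $\partial_2(X) = \partial_2(\overline{H}(X))$ iff $E(X, \overline{H}(X))$, and by the definition of the natural bijection we have $\Gamma(\partial_1(X)) = \partial_2(X)$. Combining these observations, the function preservation equation rewrites, in the presence of $A^2[E]$, as the condition $E(X, \overline{H}(X))$ for all $X \subseteq \mathrm{rng}(\partial_1)$ in $S_1[M]$, which yields the claimed equivalence.

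The main point that must be handled carefully is this translation via $A^2[E]$, and it is here that the hypothesis that $H \upharpoonright \mathrm{rng}(\partial_1)$ is an element of $S_2[M]$ is used: it ensures, via full comprehension~(\ref{eqn:fullcomp}), that $\overline{H}(X)$ is itself a concept inside $\mathcal{M}$ whenever $X$ is, so that the instances of $A^2[E]$ applied to the pair $(X, \overline{H}(X))$ are available as a deduction internal to the ambient structure $\mathcal{M}$. Modulo these bookkeeping points, the argument is indeed routine, mirroring the elementary derivation that was spelled out before Proposition~\ref{prop:simpleequivalent}.
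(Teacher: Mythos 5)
There is a genuine gap, and it sits exactly at your phrase ``combining these observations.'' The reduction of the isomorphism property to the atomic clause~(\ref{eqn:defn:iso:func}) is fine: $\overline{H}$ is an isomorphism iff $H(\partial_1(X))=\partial_2(\overline{H}(X))$ for every $X\in S_1[M]\cap P(\mathrm{rng}(\partial_1))$. But to convert this into $E(X,\overline{H}(X))$ you use two facts: that $\partial_2(X)=\partial_2(\overline{H}(X))$ iff $E(X,\overline{H}(X))$ (correct, by $A^2[E]$), and that $\Gamma(\partial_1(X))=\partial_2(X)$ (correct, but a fact about $\Gamma$, not about $H$). These combine into the desired biconditional only if $H(\partial_1(X))=\Gamma(\partial_1(X))$ on the relevant abstracts, i.e.\ only if $H$ already agrees with the natural bijection there---which is precisely what is not among the hypotheses. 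In the derivation preceding Proposition~\ref{prop:simpleequivalent} that identity is available because equation~(\ref{eqn:defn}) is the \emph{definition} of $\Gamma$; for an arbitrary $H$ it is not, and you have silently substituted a property of $\Gamma$ for a property of $H$ at the crucial step.

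The gap is not mere bookkeeping: the ``if'' direction of the biconditional genuinely fails for an arbitrary $H$ satisfying only the stated hypotheses. Take Hume's Principle on the standard model with domain $\omega$, with $\partial_1=\partial_2=\#$ where $\#(X)=\left|X\right|+1$ for finite $X$ and $\#(X)=0$ for infinite $X$, so that $\mathrm{rng}(\partial_1)=\omega$ and $\Gamma$ is the identity. Let $H$ be the transposition of $0$ and $1$, whose graph is in $S_2[M]$. Then $\overline{H}(X)\approx X$ for every $X$, so the right-hand condition $E(X,\overline{H}(X))$ holds for all $X\subseteq\mathrm{rng}(\partial_1)$; yet $H(\#(\emptyset))=H(1)=0\neq 1=\#(\overline{H}(\emptyset))$, so $\overline{H}$ is not an isomorphism. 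What your computation actually establishes is the correct but different statement that an \emph{isomorphism} $\overline{H}$ satisfies $E(X,\overline{H}(X))$ for all $X\subseteq\mathrm{rng}(\partial_1)$ if and only if $H$ agrees with $\Gamma$ on $\{\partial_1(X):X\subseteq\mathrm{rng}(\partial_1)\}$---which is the form in which the proposition is actually deployed in the proof of Corollary~\ref{cor:whenarbirisnat}. Any complete argument must either add that agreement as a hypothesis or recast the statement along those lines; as literally stated, the biconditional cannot be derived by the proposed route because it fails in the direction indicated.
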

\noindent As a corollary to Theorem~\ref{thm:ncr=iia=cca}, we will establish in the next section that such isomorphisms are \emph{always} equal to the natural bijection in the setting of natural relative categoricity (cf. Corollary~\ref{cor:whenarbirisnat}).

Finally, let's briefly say something about the content of our second main Theorem~\ref{thm:coverthm}. For ease of future reference, let's record the official notion in the following definition:
\begin{defn}\label{eqn:defn:NRC:covered}
An abstraction principle $A[E]$ is \emph{surjectively relatively categorical} if all models~$\mathcal{M}$ of~$A^2[E]$ from~(\ref{eqn:1}) wherein the abstraction operators $\partial_i:S_1[M]\rightarrow M$ are surjective, the natural bijection $\overline{\Gamma}:\mathcal{M}_1\rightarrow \mathcal{M}_2$ from Definition~\ref{defn:nat:bijection} is an isomorpism of the induced structures $\mathcal{M}_1, \mathcal{M}_2$ from equation~(\ref{eqn:induced}).
\end{defn}
\noindent So clearly natural relative categoricity implies surjective relative categoricity. For an example of an abstraction principle which is surjectively relatively categorical but not naturally relatively categorical, see the example of the Bicardinality Principle in \S\ref{sec:bicardinality}. In the statement of Theorem~\ref{thm:coverthm}, the key notions were that of permutation invariance~(\ref{eqn:defn:piI}) and bicardinality coarsening~(\ref{eqn:defn:biC}). In the context of Theorem~\ref{thm:coverthm}, it is understood that to say an abstraction principle $A[E]$ has one of these properties is to say that these properties are deducible from the supposition that (i) the abstraction operator is a surjection as well as from (ii) the abstraction principle itself, the full comprehension schema~(\ref{eqn:fullcomp}), the axiom of choice~(\ref{eqn:AC}), and the global choice schema~${\tt GC}$~(\ref{eqn:gcschema}). 

\section{The Equivalent Characterizations}\label{sec03}

The goal of this section is to establish Theorem~\ref{thm:ncr=iia=cca} and Theorem~\ref{thm:coverthm}. The first provides an equivalent characterization of natural relative categoricity in terms of cardinality coarsening on abstracts~(\ref{eqn:defn:CC}) and injective invariance on abstracts~(\ref{eqn:defn:II}). The second provides an equivalent characterization of surjectively relatively categoricity in terms of bicardinality coarsening~(\ref{eqn:defn:biC}) and permutation invariance~(\ref{eqn:defn:piI}). These theorems gives us two qualitatively distinct means by which to identify and recognize our versions of relative categoricity. For, in and of themselves, natural relative categoricity and surjective relative categoricity  (Definition~\ref{eqn:defn:NRC} and  Definition~\ref{eqn:defn:NRC:covered}) are claims about determining a single structure. But the notions of injection  invariance on abstracts~(\ref{eqn:defn:II}) and permutation invariance~(\ref{eqn:defn:piI}) deal in a different currency:  these conditions say say that a certain second-order relation should be invariant under certain mappings of the entire domain.  Cardinality coarsening on abstracts~(\ref{eqn:defn:CC}) and bicardinality coarsening~(\ref{eqn:defn:biC}) are different still: they are more local in character and concern the comparative sizes of a concept (and its relative complement).

Here is then the proof of Theorem~\ref{thm:ncr=iia=cca}:
\begin{proof}
First suppose that $A[E]$ is naturally relatively categorical. Suppose that~$\mathcal{M}^{\ast}=(M, S_1[M], S_2[M], \ldots, <, \partial_1)$ is an arbitrary model of~$A[E]$. Note that~$V=\{x:x=x\}$~(\ref{eqn:defnV}) as interpreted on~$\mathcal{M}^{\ast}$ is exactly~$M$. So with an eye towards showing injection invariance on abstracts, suppose that~$\iota:M\rightarrow M$ is an injection whose graph is in~$S_2[M]$. Then define~$\partial_2:S_1[M]\rightarrow M$ by~$\partial_2=\iota\circ \partial_1$. Note that since~$\iota:M\rightarrow M$ is an injection, we have that the following holds in $\mathcal{M}$:
\begin{myequation}\label{eqn:iamanaequationinaproof}
\partial_2(X)=\partial_2(Y) \Longleftrightarrow \partial_1(X)=\partial_1(Y) \Longleftrightarrow E(X,Y)
\end{myequation}
Hence, since~$\partial_2$ is~$\mathcal{M}^{\ast}$-definable, the following structure is a model of~$A^2[E]$:
\begin{myequation}
 \mathcal{M}=(M, S_1[M], S_2[M], \ldots, <,\partial_1, \partial_2)
\end{myequation}
Since by hypothesis~$A[E]$ is naturally relatively categorical, we have that the natural bijection~$\overline{\Gamma}: \mathcal{M}_1\rightarrow \mathcal{M}_2$ is an isomorphism. Then by definition of~$\Gamma$ in equation~(\ref{eqn:defn}), we have that $X\in S_1[M]$ implies $\Gamma(\partial_1(X)) = \partial_2(X) = \iota (\partial_1(X))$. Hence~$\Gamma=\iota \upharpoonright (\mathrm{rng}(\partial_1))$. Now we may finally finish verifying injection invariance on abstracts. Suppose that~$X\in S_1[M]\cap P(\mathrm{rng}(\partial_1))$. Then~$\overline{\Gamma}(X)=\overline{\iota}(X)$. Hence from Proposition~\ref{prop:simpleequivalent} (and in particular equation~(\ref{eqn:final})) we may infer that~$E(X,\overline{\Gamma}(X))$ and hence~$E(X,\overline{\iota}(X))$.

Second suppose~$A[E]$ is injection invariant on abstracts. To show that~$A[E]$ is naturally relatively categorical, suppose that 
\begin{myequation}
\mathcal{M} = (M, S_1[M], S_2[M], \ldots, <,\partial_1, \partial_2)
\end{myequation}
is a model of~$A^2[E]$. So we must show that the natural bijection~$\Gamma:\mathcal{M}_1\rightarrow \mathcal{M}_2$ is an isomorphism. By Cardinal Comparability~(\ref{eqn:CC}), 
\begin{myequation}\label{eqn:choice}
\left|M\setminus \mathrm{rng}(\partial_1)\right|\leq  \left| M\setminus \mathrm{rng}(\partial_2)\right| \hspace{5mm} \mbox{or} \hspace{5mm} \left| M\setminus \mathrm{rng}(\partial_2)\right| \leq \left| M\setminus \mathrm{rng}(\partial_1)\right|
\end{myequation}
First suppose that $\left|M\setminus \mathrm{rng}(\partial_1)\right|\leq  \left| M\setminus \mathrm{rng}(\partial_2)\right|$. Then let~$\Delta:M\setminus \mathrm{rng}(\partial_1) \rightarrow M\setminus \mathrm{rng}(\partial_2)$ be a witnessing injection. Define an injection~$\iota:M\rightarrow M$ by
\begin{myequation}
\iota \upharpoonright \mathrm{rng}(\partial_1) = \Gamma, \hspace{10mm}\iota \upharpoonright (M\setminus \mathrm{rng}(\partial_1))=\Delta
\end{myequation}
Since~$\Gamma$ has range~$\mathrm{rng}(\partial_2)$ and~$\Delta$ has range~$M\setminus \mathrm{rng}(\partial_2)$, the map~$\iota:M\rightarrow M$ is indeed an injection. Now, we verify natural relative categoricity by verifying equation~(\ref{eqn:final}). So suppose that~$X\in (S_1[M]\cap P(\mathrm{rng}(\partial_1))$. Then by injection invariance on abstracts  applied to~$\iota$, we have that~$\mathcal{M}\models E(X,\overline{\iota}(X))$. But since~$X\subseteq \mathrm{rng}(\partial_1)$, we have that~$\overline{\iota}(X)=\overline{\Gamma}(X)$, so that~$\mathcal{M}\models E(X,\overline{\Gamma}(X))$. Hence, we have finished verifying natural relative categoricity via equation~(\ref{eqn:final}).

Conversely, suppose that~$\left|M\setminus \mathrm{rng}(\partial_2)\right|\leq \left|M\setminus \mathrm{rng}(\partial_1)\right|$ with witnessing injection~$\Delta:M\setminus \mathrm{rng}(\partial_2) \rightarrow M\setminus \mathrm{rng}(\partial_1)$. Define an injection~$\iota:M\rightarrow M$ by
\begin{myequation}
\iota \upharpoonright \mathrm{rng}(\partial_2) = \Gamma^{-1}, \hspace{10mm}\iota \upharpoonright (M\setminus \mathrm{rng}(\partial_2))=\Delta
\end{myequation}
Since~$\Gamma^{-1}$ has range~$\mathrm{rng}(\partial_1)$ and~$\Delta$ has range~$M\setminus \mathrm{rng}(\partial_1)$, the map~$\iota:M\rightarrow M$ is indeed an injection. Now, we verify natural relative categoricity by verifying equation~(\ref{eqn:final2}). So suppose that~$Y\in (S_1[M]\cap P(\mathrm{rng}(\partial_2))$. Then by injection invariance on abstracts applied to~$\iota$, we have that~$\mathcal{M}\models E(Y,\overline{\iota}(Y))$. But since~$Y\subseteq \mathrm{rng}(\partial_2)$, we have that~$\overline{\iota}(Y)=\overline{\Gamma}^{-1}(Y)$, so that~$\mathcal{M}\models E(Y,\overline{\Gamma}^{-1}(Y))$. Hence, we have finished verifying natural relative categoricity via equation~(\ref{eqn:final2}).

Having shown the equivalence of natural relative categoricity and injection invariance on abstracts, we now show that these are equivalent to cardinality coarsening on abstracts. First, note that 
cardinality coarsening on abstracts trivially implies injection invariance on abstracts. For, suppose that we're working in a model of~$A[E]$ and there's an injection~$\iota:V\rightarrow V$ and~$X\subseteq \mathrm{rng}(\partial_1)$. Then let~$Y=\overline{\iota}(X)$, so that~$Y\approx X$ and $X \subseteq \mathrm{rng}(\partial_1)$. Then by  cardinality coarsening on abstracts, we have~$E(X,Y)$, which is just to say~$E(X,\overline{\iota}(X))$, so that we have verified injection invariance on abstracts.

Now assume that~$A[E]$ is injection invariant on abstracts. Consider a model~$\mathcal{M}^{\ast}=(M, S_1[M], S_2[M], \ldots,<, \partial_1)$ of~$A[E]$, and suppose that~$X_0,Y_0$ are members of~$S_1[M]$ with~$Y_0\approx X_0$ and $X_0\subseteq \mathrm{rng}(\partial_1)$. We must show that~$E(X_0,Y_0)$. There are several cases to consider, which for the sake of readability, we enumerate separately. For the abbreviations of cardinality notions which we employ here, see in particular (\ref{eqn:cardinalityabb:1})-(\ref{eqn:cardinalityabb:3}) from \S\ref{sec02}.

Case I:~$\left| Y_0\right| = \left| X_0\right| =\left| V\right|$. Then~$V\approx X_0\subseteq \mathrm{rng}(\partial_1)\subseteq V$ implies that~$\left|\mathrm{rng}(\partial_1)\right| = \left|V\right|$. Choose a bijection~$\pi: \mathrm{rng}(\partial_1)\rightarrow V$ and define a map~$\partial_2:S_1[M]\rightarrow M$ by~$\partial_2(X)=\pi(\partial_1(X))$, so that since~$\pi: \mathrm{rng}(\partial_1)\rightarrow V$ is a surjection, we have that~$\mathrm{rng}(\partial_2)=V$. Then observe that for any~$X,Y$ we have the following in $\mathcal{M}^{\ast}$ since~$\pi$ is an injection:
\begin{myequation}\label{eqn:iamausualmove0}
\partial_2(X)=\partial_2(Y) \Longleftrightarrow \partial_1(X)=\partial_1(Y) \Longleftrightarrow E(X,Y)
\end{myequation}
Hence, the structure~$\mathcal{M}^{\dagger}=(M, S_1[M], S_2[M], \ldots, <,\partial_2)$ is likewise a model of~$A[E]$, and so by injection invariance on abstracts we have
\begin{myequation}
\mathcal{M}^{\dagger}\models  [ \forall \; \mbox{ injection } \iota:V\hspace{-1mm}\rightarrow\hspace{-1mm}V \; \forall \; X\subseteq \mathrm{rng}(\partial_2) \;E(X,\overline{\iota}(X))]
\end{myequation}
But since~$\mathrm{rng}(\partial_2)=V$, this can be simplified to: 
\begin{myequation}
\mathcal{M}^{\dagger}\models  [ \forall \; \mbox{ injection  }\iota:V\hspace{-1mm}\rightarrow\hspace{-1mm}V \; \forall \; X \;E(X,\overline{\iota}(X))]
\end{myequation}
Now, since~$Y_0\approx X_0\approx V$, choose bijections~$j_1:V\rightarrow X_0$ and~$j_2:V\rightarrow Y_0$. Then by the previous equation, we have~$E(V,\overline{j_1}(V))$ and~$E(V,\overline{j_2}(V))$, or what is the same~$E(V,X_0)$ and~$E(V,Y_0)$. Since $E$ is an equivalence relation, we have that~$E(X_0,Y_0)$, which was to be demonstrated.

Case II:~$\left|Y_0\right|= \left|X_0\right| <\left|V\right| \; \& \: \left|V\right|<\omega$. By Cardinal Comparability~(\ref{eqn:CC}), 
\begin{myequation}
\left|V\setminus X_0 \right|\leq \left|V\setminus Y_0 \right|\hspace{5mm} \mbox{or} \hspace{5mm} \left|V\setminus Y_0 \right|\leq \left|V\setminus X_0 \right| 
\end{myequation}
First suppose that~$\left|V\setminus X_0 \right|\leq \left|V\setminus Y_0 \right|$. Choose injection~$\iota:V\rightarrow V$ such that~$\iota\upharpoonright X_0: X_0\rightarrow Y_0$ is a bijection and~$\iota \upharpoonright (V\setminus X_0): V\setminus X_0 \rightarrow V\setminus Y_0$ is an injection. Then since~$A[E]$ is injection invariant on abstracts, we have that~$E(X_0,\overline{\iota}(X_0))$, which is the same as~$E(X_0,Y_0)$. Second suppose that~$\left|V\setminus Y_0 \right|\leq \left|V\setminus X_0 \right|$. Choose injection~$j:V\rightarrow V$ such that~$j\upharpoonright Y_0: Y_0\rightarrow X_0$ is a bijection and~$j \upharpoonright (V\setminus Y_0): V\setminus Y_0 \rightarrow V\setminus X_0$ is an injection. Since~$\left|V\right|<\omega$, we have that the injection ~$j:V\rightarrow V$ is actually a bijection. Let~$\iota = j^{-1}$, so that~$\iota:V\rightarrow V$ is a bijection and~$\overline{\iota}(X_0)=Y_0$. Then by injection invariance on abstracts, we again have~$E(X_0, \overline{\iota}(X_0))$ or~$E(X_0, Y_0)$.

Case III:~$\left|Y_0\right|= \left|X_0\right| <\left|V\right| \; \& \: \left|V\right|\geq \omega$. Then by Infinite Sums are Maxs~(\ref{eqn:InfSumMax}), our case assumptions imply that $\left|V\right|  =   \max\{\left|V\setminus X_0\right|, \left|X_0\right|\} = \left|V\setminus X_0\right|$ and $\left|V\right|   =  \max\{\left|V\setminus Y_0\right|, \left|Y_0\right|\} = \left|V\setminus Y_0\right|$. 
Hence, trivially one has that~$ \left|V\setminus X_0\right|\leq \left|V\setminus Y_0\right|$. Choose injection~$\iota:V\rightarrow V$ such that~$\iota\upharpoonright X_0: X_0\rightarrow Y_0$ is a bijection and~$\iota \upharpoonright (V\setminus X_0): V\setminus X_0 \rightarrow V\setminus Y_0$ is an injection. Then since~$A[E]$ is injection invariant on abstracts, we have that~$E(X_0,\overline{\iota}(X_0))$, which is the same as~$E(X_0,Y_0)$.
\end{proof}

Finally, let's note an instructive corollary to Theorem~\ref{thm:ncr=iia=cca}. This corollary tells us that in the context of natural relative categoricity, the only definable isomorphisms between the induced structures are identical to the natural bijection:
\begin{cor}\label{cor:whenarbirisnat}
Suppose that $A[E]$ is naturally relatively categorical. Suppose that $\mathcal{M}$ is a model of $A^2[E]$ with induced structures $\mathcal{M}_1$ and $\mathcal{M}_2$. Suppose that $\overline{H}:\mathcal{M}_1\rightarrow \mathcal{M}_2$ is a map whose restriction $H\upharpoonright \mathrm{rng}(\partial_1)$ to $\mathrm{rng}(\partial_1)$ is an element of $S_2[M]$. If $\overline{H}:\mathcal{M}_1\rightarrow \mathcal{M}_2$ is an isomorphism, then it is equal to the natural bijection $\overline{\Gamma}: \mathcal{M}_1\rightarrow \mathcal{M}_2$.
\end{cor}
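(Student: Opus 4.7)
The strategy is to apply Proposition~\ref{prop:whenarbirisnat} to $\overline{H}$ and to the natural bijection $\overline{\Gamma}$ separately, and then use the transitivity of $E$ together with the $\partial$-preservation clause of ``isomorphism'' to identify their values. By hypothesis $\overline{H}$ is an isomorphism, so Proposition~\ref{prop:whenarbirisnat} gives $\mathcal{M}\models [\forall \, X\, (X\subseteq \mathrm{rng}(\partial_1)\rightarrow E(X,\overline{H}(X)))]$. By natural relative categoricity together with Proposition~\ref{prop:simpleequivalent} (in the form of equation~(\ref{eqn:final})), the same holds with $\overline{\Gamma}$ in place of $\overline{H}$, so both maps are recognised by the same condition on the concept sort of $\mathcal{M}_1$.

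Fixing an arbitrary $X\in S_1[M]\cap P(\mathrm{rng}(\partial_1))$, the symmetry and transitivity of $E$ then give $\mathcal{M}\models E(\overline{H}(X),\overline{\Gamma}(X))$, and the abstraction principle $A[E]$ applied via $\partial_2$ upgrades this to the identity $\partial_2(\overline{H}(X))=\partial_2(\overline{\Gamma}(X))$. Since both $\overline{H}$ and $\overline{\Gamma}$ are isomorphisms of $L_0[\partial]$-structures, each satisfies the function-preservation clause from equation~(\ref{eqn:defn:iso:func}) with respect to the symbol $\partial$: one has $H(\partial_1(X))=\partial_2(\overline{H}(X))$ and $\Gamma(\partial_1(X))=\partial_2(\overline{\Gamma}(X))$. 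Chaining the three equations yields $H(\partial_1(X))=\Gamma(\partial_1(X))$ for every such $X$, so $H$ and $\Gamma$ coincide on the image of the restricted $\partial_1$.

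The main obstacle is then to promote this agreement from the image $\{\partial_1(X):X\in S_1[M]\cap P(\mathrm{rng}(\partial_1))\}$ out to the entire universe $\mathrm{rng}(\partial_1)$ of $\mathcal{M}_1$, so that $\overline{H}$ and $\overline{\Gamma}$ coincide not merely on those objects that happen to be hit by the restricted $\partial_1$. I would handle this by invoking cardinality coarsening on abstracts, which is supplied by Theorem~\ref{thm:ncr=iia=cca}: given $a=\partial_1(W)$ with $W\in S_1[M]$, one uses the global well-order and the cardinal arithmetic recorded in \S\ref{sec02} to choose some $X\subseteq \mathrm{rng}(\partial_1)$ equinumerous with $W$; cardinality coarsening on abstracts then yields $E(X,W)$, whence $\partial_1(X)=\partial_1(W)=a$ and thus $H(a)=\Gamma(a)$ by the previous paragraph. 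Once $H=\Gamma$ has been established on objects, the identity $\overline{H}(Y)=\{H(b):b\in Y\}=\{\Gamma(b):b\in Y\}=\overline{\Gamma}(Y)$ is automatic on concepts of $\mathcal{M}_1$, and analogously on the higher-arity relation sorts, delivering $\overline{H}=\overline{\Gamma}$.
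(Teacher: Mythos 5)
Your opening moves are correct and, in fact, more careful than the paper's own proof. You correctly derive $E(X,\overline{H}(X))$ and $E(X,\overline{\Gamma}(X))$ for all $X\subseteq\mathrm{rng}(\partial_1)$, and you correctly observe that chaining these with the $\partial$-preservation clauses only yields $H(\partial_1(X))=\Gamma(\partial_1(X))$ for $X$ ranging over \emph{subconcepts of} $\mathrm{rng}(\partial_1)$ --- that is, agreement of $H$ with $\Gamma$ on the image of the restricted operator $\partial_1\upharpoonright(S_1[M]\cap P(\mathrm{rng}(\partial_1)))$, not on all of $\mathrm{rng}(\partial_1)$. The paper's proof never confronts this: it reads Proposition~\ref{prop:whenarbirisnat} as though the condition $\forall X\subseteq\mathrm{rng}(\partial_1)\;E(X,\overline{H}(X))$ characterized being \emph{equal to the natural bijection}, whereas that proposition as stated characterizes being an \emph{isomorphism}. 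So you have isolated exactly the point where something substantive remains to be said.

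Your proposed repair of that last step fails, however, and the failure is essential rather than cosmetic. Given $a=\partial_1(W)$ for arbitrary $W\in S_1[M]$, there need not exist any $X\subseteq\mathrm{rng}(\partial_1)$ with $X\approx W$: nothing bounds $\left|W\right|$ by $\left|\mathrm{rng}(\partial_1)\right|$, and by~(\ref{eqn:niceappchoice}) the range of $\partial_1$ can be far smaller than $V$. Concretely, take Hume's Principle (where $E$ is $\approx$, hence cardinality coarsening, hence naturally relatively categorical by Theorem~\ref{thm:ncr=iia=cca}) on a standard model whose first-order domain has cardinality $\omega_2$. Then $\mathrm{rng}(\#_1)$ is countable, consisting of one abstract $c^1_\kappa$ for each cardinal $\kappa\leq\omega_2$; every $X\subseteq\mathrm{rng}(\#_1)$ is countable, so no such $X$ is equinumerous with a $W$ of cardinality $\omega_1$ or $\omega_2$, and the restricted operator never takes the values $c^1_{\omega_1}$, $c^1_{\omega_2}$. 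Worse for the statement itself: the map $H$ sending $c^1_\kappa\mapsto c^2_\kappa$ for $\kappa\leq\omega$ but sending $c^1_{\omega_1}\mapsto c^2_{\omega_2}$ and $c^1_{\omega_2}\mapsto c^2_{\omega_1}$ satisfies $H(\#_1(X))=c^2_{\left|X\right|}=\#_2(\overline{H}(X))$ for every $X\subseteq\mathrm{rng}(\#_1)$, so $\overline{H}$ is an isomorphism of the induced structures distinct from the natural bijection. Thus the agreement you established on the image of the restricted operator genuinely cannot be promoted to all of $\mathrm{rng}(\partial_1)$ without an additional hypothesis (for instance, surjectivity-type assumptions ensuring every abstract is $\partial_1(X)$ for some $X\subseteq\mathrm{rng}(\partial_1)$, or that $\mathrm{rng}(\partial_1)$ differs from that image by at most one point, as happens for Hume's Principle on domains of size at most $\omega_1$); the obstruction you located is real, and neither your fix nor the paper's appeal to Proposition~\ref{prop:whenarbirisnat} closes it.
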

\begin{proof}
So suppose that $\overline{H}:\mathcal{M}_1\rightarrow \mathcal{M}_2$ is an isomorphism. To show that $\overline{H}$ is equal to the natural bijection $\overline{\Gamma}: \mathcal{M}_1\rightarrow \mathcal{M}_2$, it suffices by Proposition~\ref{prop:whenarbirisnat} to show that 
\begin{myequation}
\mathcal{M}\models [\forall \; X \; (X\subseteq \mathrm{rng}(\partial_1)\rightarrow E(X, \overline{H}(X))]
\end{myequation}
Suppose that $X\subseteq \mathrm{rng}(\partial_1)$, and let $Y=\overline{H}(X)$, so that $Y\approx X$ and $X\subseteq \mathrm{rng}(\partial_1)$. Since $A[E]$ is naturally relatively categorical, by Theorem~\ref{thm:ncr=iia=cca} it is cardinality coarsening on abstracts~(\ref{eqn:defn:CC}), from which we can infer $E(X,Y)$, which is the same as $E(X,\overline{H}(X))$, so that indeed the previous equation is satisfied.
\end{proof}

Let's turn now to the proof of our other main theorem, namely, Theorem~\ref{thm:coverthm}:
\begin{proof}
First suppose that $A[E]$ is surjectively relatively categorical (cf. Definition~\ref{eqn:defn:NRC:covered}). Suppose that $(M, S_1[M], S_2[M], \ldots, <,\partial_1)$ is a model of $A[E]$ where $\partial_1:S_1[M]\rightarrow M$ is a surjection, and suppose that $\pi:M\rightarrow M$ is a bijection. Let $\partial_2:S_1[M]\rightarrow M$ be defined by $\partial_2=\pi\circ \partial_1$. Then since $\pi:M\rightarrow M$ is an injection, we have that the following holds in our model:
\begin{myequation}\label{eqn:iamausualmove}
\partial_2(X)=\partial_2(Y) \Longleftrightarrow \partial_1(X)=\partial_1(Y) \Longleftrightarrow E(X,Y)
\end{myequation}
Hence $\mathcal{M}=(M, S_1[M], S_2[M], \ldots, <,\partial_1, \partial_2)$ is a model of $A^2[E]$. Further, since $\pi:M\rightarrow M$ is an surjection, we have that $\partial_2:S_1[M]\rightarrow M$ is a surjection. Then since by the hypothesis of $A[E]$ being surjectively relatively categorical, we have that the natural bijection $\overline{\Gamma}:\mathcal{M}_1\rightarrow \mathcal{M}_2$ is an isomorphism of the induced structures (cf. equation~(\ref{eqn:induced})). Then by surjective relative categoricity (cf. equation~(\ref{eqn:final})), we have that $\mathcal{M}$ models $E(X,\overline{\Gamma}(X))$ for any $X\in S_1[M]$. But for any $X\in S_1[M]$, one may use the surjectivity of the operators to check that $\overline{\pi}(X)=\overline{\Gamma}(X)$, which of course implies that $E(X,\overline{\pi}(X))$, so that we are done.

Now suppose that $A[E]$ plus the surjectivity of the abstraction operator proves permutation invariance~(\ref{eqn:defn:piI}). Then we show that $A[E]$ is surjectively relatively categorical. So suppose that $\mathcal{M}$ is a model of $A^2[E]$ wherein $\partial_1, \partial_2$ are surjective. Then the natural bijection is a bijection $\Gamma:V\rightarrow V$. Then by permutation invariance~(\ref{eqn:defn:piI}), we have that $E(X,\overline{\Gamma}(X))$ for all~$X$. Then by equation~(\ref{eqn:final}), we have that $\overline{\Gamma}:\mathcal{M}_1\rightarrow \mathcal{M}_2$ is an isomorphism. 

So we've shown that surjective relative categoricity is equivalent to permutation invariance, assuming that the abstraction operator is surjective. Now we show that these two conditions are equivalent to the condition of bicardinality coarsening, under the hypothesis that the abstraction operator is surjective. First suppose that that $A[E]$ plus the surjectivity of the abstraction operator proves permutation invariance~(\ref{eqn:defn:piI}). Then suppose that $X, Y$ such that $\left|X\right|=\left|Y\right|$ and  $\left|V\setminus X\right|=\left|V\setminus Y\right|$. Any two witnessing bijections can be conjoined into a bijection $\pi:V\rightarrow V$ such that $\overline{\pi}(X)=Y$ and $\overline{\pi}(V\setminus X)=V\setminus Y$. Then by permutation invariance~(\ref{eqn:defn:piI}), we have that $E(X,\overline{\pi}(X))$ or $E(X,Y)$, which is what we wanted to establish. Finally, suppose that $A[E]$ plus the surjectivity of the abstraction operator proves bicardinality coarsening~(\ref{eqn:defn:biC}). Suppose that $\pi:V\rightarrow V$ is a bijection. Let $X$ be a concept and let $Y=\overline{\pi}(X)$. Then since $\pi$ is a bijection, we have that $\left|X\right|=\left|Y\right|$ and $\left|V\setminus X\right|=\left|V\setminus Y\right|$. So by bicardinality coarsening~(\ref{eqn:defn:biC}), it follows that $E(X,Y)$ or $E(X,\overline{\pi}(X))$.
\end{proof}

In analogue to Corollary~\ref{cor:whenarbirisnat}, we have the following result showing that the only isomorphism is the natural bijection in the setting of surjective relative categoricity. We omit the proof since it is entirely analogous to the proof of the this earlier corollary.
\begin{cor}\label{cor:whenarbirisnat2}
Suppose that $A[E]$ is surjectively relatively categorical. Suppose that $\mathcal{M}$ is a model of $A^2[E]$, where the abstraction operators are surjective,  with induced structures $\mathcal{M}_1$ and $\mathcal{M}_2$. Suppose that $\overline{H}:\mathcal{M}_1\rightarrow \mathcal{M}_2$ is a map whose restriction $\overline{H}\upharpoonright \mathrm{rng}(\partial_1)$ to $\mathrm{rng}(\partial_1)$ is an element of $S_2[M]$. If $\overline{H}:\mathcal{M}_1\rightarrow \mathcal{M}_2$ is an isomorphism, then it is equal to the natural bijection $\overline{\Gamma}: \mathcal{M}_1\rightarrow \mathcal{M}_2$.
\end{cor}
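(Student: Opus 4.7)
The plan is to mirror exactly the proof of Corollary \ref{cor:whenarbirisnat}, substituting the invocation of cardinality coarsening on abstracts with an invocation of bicardinality coarsening, which is the characterization of surjective relative categoricity supplied by Theorem \ref{thm:coverthm}. The starting move is to appeal to Proposition \ref{prop:whenarbirisnat}, which reduces the identity $\overline{H} = \overline{\Gamma}$ to verifying
\[
\mathcal{M} \models \forall \; X \; \bigl( X \subseteq \mathrm{rng}(\partial_1) \rightarrow E(X, \overline{H}(X)) \bigr).
\]

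Next I would exploit the surjectivity hypothesis: since both $\partial_1$ and $\partial_2$ are surjective onto $M$, we have $\mathrm{rng}(\partial_1) = V = \mathrm{rng}(\partial_2)$, so the domains of the induced structures $\mathcal{M}_1$ and $\mathcal{M}_2$ both coincide with $M$, and $\overline{H}$ acts on all concepts $X$. Because $\overline{H}$ is in particular a bijection $V \rightarrow V$ on objects, setting $Y = \overline{H}(X)$ gives both $|X| = |Y|$ and $|V \setminus X| = |V \setminus Y|$, the latter because a bijection of $V$ sends the complement of $X$ onto the complement of $\overline{H}(X)$.

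Finally, since $A[E]$ is surjectively relatively categorical, Theorem \ref{thm:coverthm} supplies that $A[E]$ is bicardinality coarsening under the surjectivity assumption on the abstraction operator. Applying this to $X$ and $Y$ yields $E(X,Y)$, i.e.\ $E(X, \overline{H}(X))$. Proposition \ref{prop:whenarbirisnat} then delivers $\overline{H} = \overline{\Gamma}$, completing the proof.

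There is no real obstacle here: the argument is bookkeeping parallel to Corollary \ref{cor:whenarbirisnat}, and the only thing to be careful about is that the relevant cardinality data demanded by bicardinality coarsening, namely matching sizes of both $X$ and its complement, is automatically furnished by the fact that $\overline{H}$ is a bijection on the whole of $V$, a consequence of the surjectivity of both $\partial_1$ and $\partial_2$.
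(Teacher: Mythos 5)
Your proposal is correct and is exactly the argument the paper has in mind: the paper explicitly omits this proof as ``entirely analogous'' to that of Corollary~\ref{cor:whenarbirisnat}, and you carry out precisely that analogue, reducing via Proposition~\ref{prop:whenarbirisnat} to showing $E(X,\overline{H}(X))$ and then invoking bicardinality coarsening from Theorem~\ref{thm:coverthm} in place of cardinality coarsening on abstracts. Your observation that surjectivity makes $H$ a bijection of $V$, so that $\overline{H}(X)$ automatically matches $X$ in both cardinality and co-cardinality, is the one point that needs checking, and you handle it correctly.
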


So the equivalent characterizations featuring in Theorem~\ref{thm:ncr=iia=cca} and Theorem~\ref{thm:coverthm} concern the abstraction principle $A[E]$. It will be useful to have analogues of these for the underlying equivalence relations $E$ as well. So we define:
 \begin{myenumerate}
 \item \emph{Injection Invariant}:~$\models [ \forall \; \mbox{ injection } \iota:V\hspace{-1mm}\rightarrow\hspace{-1mm}V \; \forall \; X \;E(X,\overline{\iota}(X))]$\label{eqn:RC2xsemi}
  \item \emph{Permutation Invariant}:~$\models [ \forall \; \mbox{ bijection } \pi:V\hspace{-1mm}\rightarrow\hspace{-1mm}V \; \forall \; X \;E(X,\overline{\pi}(X))]$\label{eqn:RC2xsemi:permu}
\item \emph{Cardinality Coarsening}:~$\models [ \forall \; X, Y\; (\left|X\right|=\left|Y\right|\rightarrow E(X,Y))]$\label{eqn:RC3xsemi}
\item \emph{Bicardinality Coarsening}:~$\models [ \forall \; X, Y\; ((\left|X\right|=\left|Y\right| \; \& \; \left|V\setminus X\right|=\left|V\setminus Y\right|)\rightarrow E(X,Y))]$\label{eqn:RC3xsemi:bicardinality}
\item \emph{Injection Invariant on Small Concepts}: \label{eqn:RC2xsmall}
\item[] \hspace{30mm}~$\models [ \forall \; \mbox{ injection  }\iota:V\hspace{-1mm}\rightarrow\hspace{-1mm}V \; \forall \; X \; (\left|X\right|\leq \left|\nicefrac{P(V)}{E}\right| \;\rightarrow E(X,\overline{\iota}(X)))]$
\item \emph{Cardinality Coarsening on Small Concepts}: \label{eqn:RC3xsmall}
\item[] \hspace{30mm}~$\models [ \forall \; X, Y\; (\left|Y\right|=\left|X\right|\leq \left|\nicefrac{P(V)}{E}\right| \rightarrow E(X,Y))]$
\end{myenumerate}
In the statements of these notions, the~$\models$ relation is the deduction relation from the deduction system of second-order logic which is the background of all our theories. In the final two conditions~(\ref{eqn:RC2xsmall})-(\ref{eqn:RC3xsmall}), the clause~$\left|X\right|\leq \left|\nicefrac{P(V)}{E}\right|$ just means that there's an injection from~$X$ to the~$E$-equivalence classes of second-order objects. This can be written in second-order logic as follows:
\begin{myequation}\label{eqn:whatimeans}
\exists \; R \; \forall \; x,y\in X \; (x\neq y \rightarrow \neg E(R[x], R[y]))
\end{myequation}
wherein~$R$ is a binary relation and where~$R[x]=\{z: Rxz\}$. These conditions~(\ref{eqn:RC2xsemi})-(\ref{eqn:RC3xsmall}) are all conditions on an arbitrary formula~$E(X,Y)$, which we assume to be an $L_0$-formula (cf. Definition~\ref{defn:L000}) which is provably an equivalence relation in our background second-order logic.

In diagrammatic form, the relationship between these notions are displayed in Figure~\ref{diagram1}. In the figure, the arrows going in both directions-- i.e. ``$\leftrightarrow$''-- indicate a provable equivalence, while the double-lined arrows that go in only one direction--  i.e. ``$\Rightarrow$''-- indicate that the implication cannot be reversed. The implications between e.g. injection 
invariance and cardinality coarsening follows automatically from the analogous part of the proof of Theorem~\ref{thm:ncr=iia=cca}. The only implications in Figure~\ref{diagram1} that are less immediate are the implications from the conditions in the middle column to the conditions in the far-right column. So let's show that if $E$ is cardinality coarsening on small concepts then $A[E]$ is cardinality coarsening on abstracts. To see this, it suffices to  note that the Axiom of Choice~${\tt AC}$~(\ref{eqn:AC}) implies that
\begin{myequation}\label{eqn:niceappchoice}
\left|\mathrm{rng}(\partial)\right|\leq \left|\nicefrac{P(V)}{E}\right|
\end{myequation}
For, for each $x$ from $\mathrm{rng}(\partial)$ there is $X$ such that $\partial(X)=x$. Then by ${\tt AC}$~(\ref{eqn:AC}), there is $R$ such that for all $x$ from $\mathrm{rng}(\partial)$ one has that $\partial(R[x])=x$, where $R[x]=\{y: Rxy\}$ which exists by comprehension. Then suppose that $x\neq y$ are both from $\mathrm{rng}(\partial)$ but $E(R[x], R[y])$. Then by $A[E]$ we have $x=\partial(R[x])=\partial(R[y])=y$, a contradiction. So indeed we have~(\ref{eqn:niceappchoice}).

Let us end this section by discussing the  witnessing counterexamples featuring in Figure~\ref{diagram1}. For an example of an $E$ which is bicardinality coarsening but not cardinality coarsening, see the discussion of the Bicardinality Principle in \S\ref{sec:bicardinality}. There are different kinds of witnessing counterexamples which show that the conditions on $A[E]$ don't imply the analogous conditions on $E$. One way to see this is to consider~$A[E]$ which are inconsistent. For instance, take~$E_0(X,Y)\equiv X=Y$. Then $A[E_0]$ is just the abstraction principle Basic Law~V from Frege's \emph{Grundgesetze}, and as we noted earlier this is inconsistent with full comprehension (which we are assuming in this paper). Since the conditions on~$A[E]$ are conditions on derivability in~$A[E]$, trivially the inconsistent~$A[E_0]$ satisfies all of these. One might hope that the right-most arrows could be reversed with the additional assumption of the consistency of~$A[E]$.  But this is not the case. For consider
\begin{myequation}\label{eqn:noreverseconst}
E(X,Y) \equiv (|V|<\omega \; \&\; X = Y) \vee (|V|\geq \omega\; \&\; X \approx Y)
\end{myequation}
Again, for our abbreviations of cardinality-related notions, see  (\ref{eqn:cardinalityabb:1})-(\ref{eqn:cardinalityabb:3}) from \S\ref{sec02}. So on finite domains,~$E$ acts like~$E_0$.  But for~$|V|\geq \omega$,~$E$ is just equinumerosity.
The principle~$A[E]$ is also naturally relatively categorical, since~$A[E]$ only has models with infinite first-order domains, and in those models,~$E$ acts exactly like equinumerosity, which we showed to be naturally relatively categorical in the earlier paper (\cite{Walsh2012aa} Proposition 14 p. 1687; see also \S\ref{sec04}). However,~$E$ is not cardinality coarsening on small concepts (or bicardinality coarsening), since if~$1<|V|<\omega$ and~$a\neq b$ are distinct objects then one has that~$\neg E(\{a\}, \{b\})$.

Second, let's show that cardinality coarsening on small concepts does not imply cardinality coarsening. First define the auxiliary formula:
\begin{myequation}
\theta(X,Y)\equiv \left|X\right|=\left|Y\right| \wedge [(X\neq V \; \& \; Y\neq V) \vee (X=V \; \& \; Y=V)]
\end{myequation}
Consider then the following equivalence relation~$E$:
\begin{myeqnarray}\label{eqn:iamanequivalencerelation}
E(X,Y) & \equiv & \left|V\right|=\omega_1 \rightarrow [(\left|X\right|=\left|V\right| \vee \left|Y\right|=\left|V\right|)\rightarrow \theta(X,Y)] \notag \\
& \wedge &  \left|V\right|=\omega_1 \rightarrow [(\left|X\right|<\left|V\right| \vee \left|Y\right|<\left|V\right|)\rightarrow \left|X\right|=\left|Y\right|] \notag\\
& \wedge & \left|V\right|\neq \omega_1 \rightarrow \left|X\right|=\left|Y\right|
\end{myeqnarray}
Intuitively, this says that if the domain~$V$ is the first uncountable cardinal, then~$V$ is the only member of its own equivalence class, while all the other concepts of the same cardinality as~$V$ form an equivalence class, and then properly smaller concepts are separated into equivalence classes according to cardinality; while if the domain~$V$ is any other size, then all the concepts are separated into equivalence classes according to cardinality. Note that~$\left|V\right|=\omega_1$ can be written in second-order logic in a standard way: for, since we have global choice in the background, it simply consists in the claim that the global well-order of~$V$ has a limit point, and that any initial segment of the global well-order is bijective with the initial segment corresponding to the first limit point, but that~$V$ itself is not bijective with this initial segment. 

Let's verify that this $E$ from~(\ref{eqn:iamanequivalencerelation}) is cardinality coarsening on small concepts but not cardinality coarsening. For the former, there are two cases to consider. First suppose that the domain~$V$ has cardinality~$\omega_1$. Then by construction~$\left|\nicefrac{P(V)}{E}\right|= \omega$. Suppose that~$\left|Y\right|=\left|X\right|\leq \omega$. Then by definition~$E(X,Y)$ since~$E$ is sameness of cardinality on smaller concepts. Second suppose that the domain of~$V$ is any other cardinality. Then by construction~$E$ is just sameness of cardinality and so we are done. Now, let us note that~$E$ is not cardinality coarsening. For, consider a model where the domain~$V$ has size~$\omega_1$. Consider~$Y_0=V$ and~$X_0=Y_0\setminus\{y_0\}$, where~$y_0$ is any element of~$Y_0=V$. Then by construction we have that~$\neg E(X_0,Y_0)$, while they of course have the same cardinality.

\begin{figure}
\begin{center}
\includegraphics[width=7.5cm]{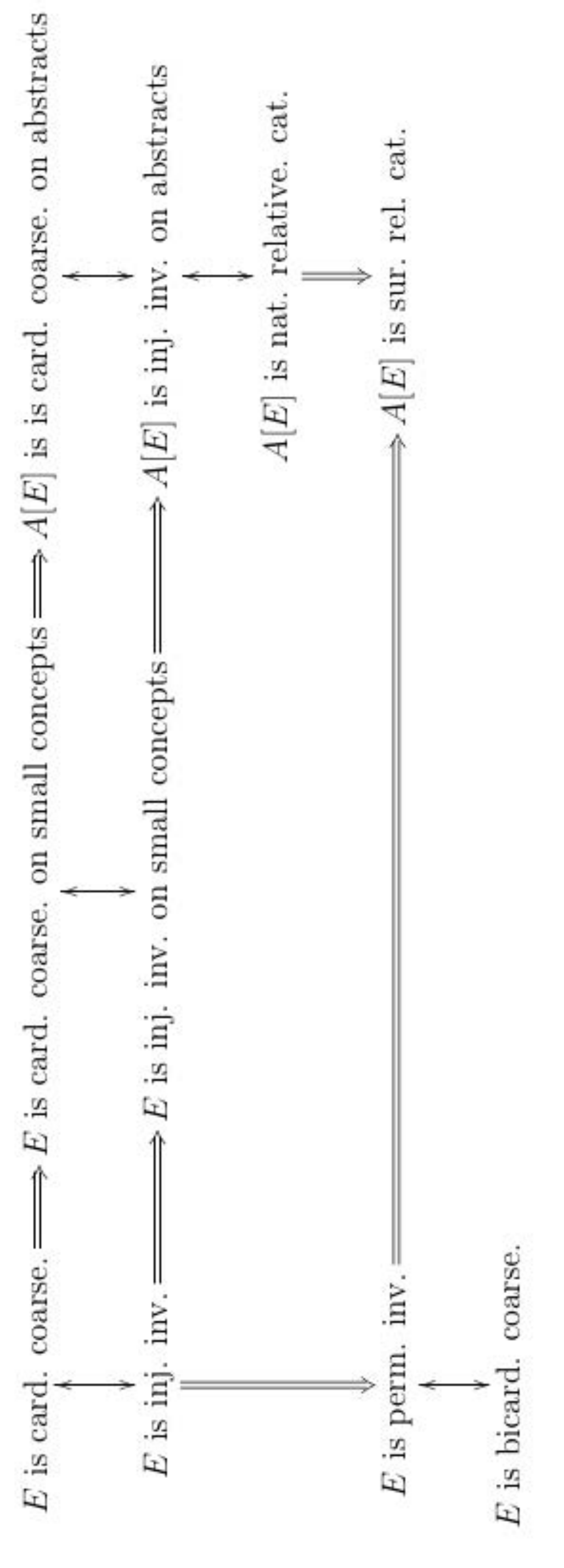}
\end{center}
\caption{Provability Relation Among Conditions on $E$ and $A[E]$}\label{diagram1}
 \end{figure}

\FloatBarrier

\section{Some Examples and Non-Examples}\label{sec04}

In this section, we give some examples and non-examples of naturally relatively categorical abstraction principles. In the case of abstraction principles which are not naturally relatively categorical, we have always been able to find a single sentence which illustrates this. So we define:
\begin{defn}\label{eqn:nateleequiv}
The abstraction principle $A[E]$ is \emph{relatively elementary equivalent} if whenever $\mathcal{M}$ is a model of $A^2[E]$ as in equation~(\ref{eqn:1}), then for any sentence $\varphi$ in the signature $L_0[\partial]$ of the induced structures $\mathcal{M}_i$ from equation~(\ref{eqn:induced}) one has that $\mathcal{M}_1\models \varphi$ if and only if $\mathcal{M}_2\models \varphi$. If $A[E]$ is not relatively elementarily equivalent, then a sentence $\varphi$ of $L_0[\partial]$ such that there is a model $\mathcal{M}$ of $A^2[E]$ with $\mathcal{M}_1\models \varphi$ and $\mathcal{M}_2\models \neg \varphi$ (or vice-versa) is called a \emph{witness} to the failure of relative elementary equivalence.
\end{defn}
\noindent Further, note that natural relative categoricity trivially implies relative elementary equivalence. Prior to stepping into the examples, let us mention that there's an obvious sense in which relative elementarily equivalence is a more apposite formalization of determinacy of truth-value ideas than natural relative categoricity itself. However, it seems difficult to study relative elementarily equivalence directly, and so we study natural relative categoricity instead, since it is the obvious sufficient condition for relative elementarily equivalence in this setting. It is not even obvious to us whether these notions are distinct in the setting of abstraction principles:
\begin{Q}\label{eqn:Q1}
Suppose that $E(X,Y)$ is an $L_0$-formula which is provably an equivalence relation in our background second-order logic and such that $A[E]$ is consistent. Suppose further that $A[E]$ is relatively elementarily equivalent. Is it necessarily the case that $A[E]$ is naturally relatively categorical? 
\end{Q}
\noindent One suspects that the answer to this question is `no,' but we have been unable to produce a counterexample.

\subsection{Hume's Principle}

Recall that Hume's Principle is the abstraction principle associated to the equivalence relation of equinumerosity. 
In \cite{Walsh2012aa} Proposition 14 p. 1687, it was shown that Hume's Principle is naturally relatively categorical. In that paper, the result was established by hand. But with our main  Theorem~\ref{thm:ncr=iia=cca} now in place, we can reduce this to the one-line observation that equinumerosity is trivially cardinality coarsening on abstracts.

\subsection{Boolos' New V}\label{sec:NewV}

In his \cite{Boolos1989aa}, Boolos drew attention to the following equivalence relation:
\begin{myequation}
E(X,Y)\equiv (\left|X\right|<\left|V\right| \vee \left|Y\right|<\left|V\right|)\rightarrow X=Y
\end{myequation}
Intuitively, this equivalence identifies all the concepts bijective with the universe~$V$, but acts like Basic Law~V on small concepts. By Theorem~\ref{thm:ncr=iia=cca}, it's easy to see that this equivalence relation is not naturally relatively categorical. For, it is trivially not cardinality coarsening on abstracts. Indeed, any two distinct small concepts which are equinumerous will not be equivalent under this equivalence relation. Further, it turns out that New~V is not relatively elementarily equivalent~(\ref{eqn:nateleequiv}), and that the witnessing sentence was previously studied by Jan\'e and Uzquiano \cite{Jane2004aa}. For, they noted that some models of New~V generated membership relations which were well-founded while others did not. 

To see this, enumerate the finite subsets of~$\omega$ as~$X_1, X_2, X_3, \ldots$, and without loss of generality, suppose that~$1\in X_1$. Then let~$\partial_1:P(\omega)\rightarrow \omega$ be any surjection that sends all the infinite subsets of~$\omega$ to~$0$, and sends~$X_1$ to~$1$. Further, let~$\partial_2:P(\omega)\rightarrow \omega$ be the surjective  map that sends all the infinite subsets of~$\omega$ to~$0$,  and acts on the finite subsets of~$\omega$ as follows:
\begin{myequation}
\partial_2(X_i) = \min(\omega\setminus (X_i \cup \{0,\partial(X_1), \ldots, \partial(X_{i\mbox{-}1})\}))
\end{myequation}
Then~$\mathcal{M}=(\omega, P(\omega), P(\omega\times \omega), \ldots, \partial_1, \partial_2)$ is model of~$A^2[E]$~(\ref{eqn:AE2}). Since both maps are surjective, we have that the induced structures~$\mathcal{M}_i$ from equation~(\ref{eqn:induced}) will satisfy $\mathcal{M}_i=(\omega, P(\omega), P(\omega\times\omega), \ldots, \partial_i)$. 

If~$A[E]$ were naturally relatively categorical, then $A[E]$ would be relatively elementarily equivalent. But let $\varphi$ be the following sentence in the signature of $L_0[\partial]$:
\begin{myequation}\label{eqn:witnsessnewv}
\varphi\equiv \exists \; X \; \exists \; b \; (\left|X\right|<\left|V\right| \; \& \; b=\partial(X) \; \& \; Xb)
\end{myequation}
Then it's easy to see that $\mathcal{M}_1\models \varphi$ since $b=\partial_1(X_1)$ is a witness. However, by construction, $\mathcal{M}_2\models \neg \varphi$. So $\varphi$ is a witness to the failure of relative elementarily equivalence of New~V, so that New~V is not naturally relatively categorical. Boolos was interested in New~V because it allowed one to define an ersatz membership relation:
\begin{myequation}\label{eqn:membership}
a\eta b \Longleftrightarrow \exists \; X \; (\left|X\right|<\left|V\right| \; \& \; \partial(X)=b \; \& \; Xa)
\end{myequation}
Expressed in these terms, $\varphi$ from (\ref{eqn:witnsessnewv}) says that there's a $b$ with $b\eta b$, which intuitively says that the membership relation from~(\ref{eqn:membership}) is not well-founded. 
\subsection{Bicardinality}\label{sec:bicardinality}

Consider the equivalence relation:
\begin{myequation}\label{eqn:bicard}
E(X,Y)\equiv \left|X\right|=\left|Y\right| \wedge \left|V\setminus X\right|=\left|V\setminus Y\right|
 \end{myequation}
It's easy to see that this is provably an equivalence relation in our background second-order logic. Further, this equivalence relation is trivially an example of an equivalence relation which is bicardinality coarsening but not cardinality coarsening, since for instance in infinite structures $V$ and $V\setminus\{a\}$ will have the same cardinality but will not be $E$-equivalent. Let's call the associated principle~$A[E]$ the \emph{Bicardinality Principle}. By Theorem~\ref{thm:coverthm}, we have that the Bicardinality Principle is surjectively relatively categorical. However, it turns out that the  Bicardinality Principle is not relatively elementarily equivalent, and hence not naturally relatively categorical. 

The non-relative elementary equivalence of the Bicardinality Principle is related to the different ways this principle can interpret the natural numbers. Just as with Hume's Principle, the Bicardinality Principle allows one to build a copy of the natural numbers $\{\overline{0}, \overline{1}, \ldots, \overline{n}, \overline{n\mbox{+}1}\,\ldots\}$ by setting $\overline{0}=\partial(\emptyset)$ and $\overline{n\mbox{+}1} = \partial(\{\overline{0}, \ldots, \overline{n}\})$. But the Bicardinality principle allows one to build a second copy $\{\widehat{0}, \widehat{1}, \ldots, \widehat{n}, \widehat{n\mbox{+}1}\,\ldots\}$ of the natural numbers by setting $\widehat{0}=\partial(V)$ and $\widehat{n\mbox{+}1} = \partial(V\setminus \{\widehat{0}, \ldots, \widehat{n}\})$. Then we can define the following sentences for each $n>0$:
\begin{myequation}
\varphi_{n} \equiv \bigwedge_{m=0}^{n-1} \overline{m}\neq \overline{n}, \hspace{10mm} \widehat{\varphi}_{n} \equiv \bigwedge_{m=0}^{n-1} \widehat{m}\neq \widehat{n},
\end{myequation}
Then one has that the Bicardinality Principle proves each $\varphi_n$ and each $\widehat{\varphi}_n$. The proof is very similar to the proof that Hume's Principle proves the variant $\psi_n$ of $\varphi_n$ wherein $\overline{n}$ is defined in terms of~$\#$ instead of~$\partial$ (where again, $\#$ is the symbol reserved for the abstraction operator featuring in Hume's Principle). Indeed, the sentences $\psi_n$ are part of the means by which one establishes the principle~Nq mentioned in the introduction \S\ref{sec01} (cf. \cite{Walsh2014aa} equation~(34) p. 111).

However, we have that $\widehat{\varphi}_n$ for each $n>0$ is a witness to the Bicardinality Principle being not relatively elementarily equivalent. Consider a model of the Bicardinality Principle of the form 
\begin{myequation}\label{eqn:anotherequation}
\mathcal{M}^{\ast} = (\omega, P(\omega), P(\omega^2), \ldots, \partial_1)
\end{myequation}
wherein~$\partial_1:P(\omega)\rightarrow \omega$ is a surjection (cf. Proposition~\ref{prop:existencesurject}). Define the injection $\iota:\omega\rightarrow \omega$ by $\iota(n)=2n$, and define $\partial_2=\iota\circ \partial_1$. Then by an argument with which we are now familiar (cf. equations~(\ref{eqn:iamausualmove0}) and (\ref{eqn:iamausualmove})), we have that the following is also a model of $A^2[E]$:
 \begin{myequation}
\mathcal{M} = (\omega, P(\omega), P(\omega^2), \ldots, \partial_1, \partial_2)
\end{myequation}
Since $\partial_1$ is surjective, we have that the induced structure~$\mathcal{M}_1$ from equation~(\ref{eqn:induced}) is identical to the structure $\mathcal{M}^{\ast}$ from~(\ref{eqn:anotherequation}), and so $\mathcal{M}_1$ models $\widehat{\varphi}_1$ since it is a model of the Bicardinality Principle. However, if $\mathcal{E}$ denotes the evens, then we have that the induced structure $\mathcal{M}_2$ is equal to the following structure 
 \begin{myequation}
\mathcal{M}_2 = (\mathcal{E}, P(\mathcal{E}), P(\mathcal{E}^2), \ldots, \partial_2\upharpoonright P(\mathcal{E}))
\end{myequation}
Then let $D=\mathcal{E}\setminus \{\partial_2(\mathcal{E})\}$. Then $D,\mathcal{E}$ have cardinality $\omega$, as do the their relative complements $\omega\setminus D, \omega\setminus \mathcal{E}$ in $\omega$. Hence since $\mathcal{M}$ models the Bicardinality Principle, we have that $\partial_2(\mathcal{E})=\partial_2(D)$. But this implies that on $\mathcal{M}_2$, we have that $\widehat{0}=\widehat{1}$, so that $\mathcal{M}_2$ does not model $\widehat{\varphi}_1$. The same argument works for $\widehat{\varphi}_n$ for all $n\geq 1$.

\subsection{The Nuisance Principle}\label{sec:nuisances}

Consider the equivalence relation $E(X,Y) \equiv \left|X\triangle Y\right| < \omega$ wherein $X\triangle Y$ denotes the symmetric difference $X\triangle Y = (X\setminus Y)\cup (Y\setminus X)$. In this, recall that the notion of $\left|Z\right|<\omega$ is that of Dedekind-finiteness~(\ref{eqn:cardinalityabb:3}), so that $E$ is $L_0$-definable. But per Proposition~\ref{eqn:equivchardedfin}, this aligns with the notion of being finite in the sense of being bijective with the initial segments $I_a=\{x: x<a\}$ from our global well-order where $a$ is finite, i.e. is less than the first limit point. So while $E$ is expressible purely in terms of the signature of the background second-order logic $L_0$ (cf. Definition~\ref{defn:L000}), we can use the global well-order to show that it is an equivalence relation. For, using the global well-order we can show that the union of any two finite sets is finite, and so the transitivity of $E$ follows from $
X\triangle Z \subseteq (X\triangle Y)\cup (Y\triangle Z)$. 

The abstraction principle $A[E]$ has been called the ``Nuisance Principle'' following Wright's identification of its abstracts as ``nuisances''   \cite{Wright1997aa}, and is related to an earlier principle presented by Boolos  \cite{Boolos1990}. Let's show that $A[E]$ is naturally relatively categorical, and let's begin with the following proposition. This proposition is well-known if one assumes the standard semantics, but we know of no extant proof for the Henkin semantics with  choice principles. In the statement of this theorem, the notion of finiteness is Dedekind-finiteness~(\ref{eqn:cardinalityabb:3}) or the aforementioned equivalent characterization (cf. again Proposition \ref{eqn:equivchardedfin}).
\begin{prop}\label{prop:nufinite}
$A[E]$ implies that $V$ is finite.
\end{prop}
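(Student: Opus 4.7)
The plan is to argue by contradiction: suppose $V$ is Dedekind-infinite, that is $|V|\geq\omega$ in the sense of~(\ref{eqn:cardinalityabb:3}), and construct from the abstraction operator $\partial$ an injection from the concepts of $V$ into $V$ itself, which is impossible by the usual Cantor diagonal argument carried out via full comprehension~(\ref{eqn:fullcomp}). The rough idea is to ``blow up'' each concept $X$ into a concept $B_X\subseteq V$ in such a way that distinct $X$ and $X'$ differ on infinitely many objects, so that the abstraction principle $A[E]$ forces $\partial(B_X)\neq\partial(B_{X'})$.

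To build the blow-up, I would first invoke Infinite Products are Maxs~(\ref{eqn:InfProdMax}) to obtain $|V\times V|=|V|$ and fix a bijection $\pi:V\to V\times V$ with coordinate projections $\pi_1,\pi_2:V\to V$. The fibres $A_v=\{w:\pi_1(w)=v\}$ (which exist by comprehension) partition $V$, and each $A_v$ is in bijection with $V$ via $\pi_2\upharpoonright A_v$, hence infinite. For each concept $X$, set $B_X=\{w:\pi_1(w)\in X\}=\bigsqcup_{v\in X}A_v$. If $X\neq X'$, pick $v\in X\triangle X'$; then $A_v\subseteq B_X\triangle B_{X'}$, and since $A_v$ is infinite we conclude $\neg E(B_X,B_{X'})$. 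By the abstraction principle, this gives $\partial(B_X)\neq\partial(B_{X'})$, so $X\mapsto\partial(B_X)$ is an injection from concepts to objects.

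To close the argument, I would pull a Cantor diagonal inside the second-order logic. By comprehension I form the binary relation $R(v,w)\equiv\exists X\,(\partial(B_X)=v\wedge w\in X)$, and by the injectivity just established I have $R[\partial(B_X)]=X$ for every concept $X$. Setting $D=\{v:\neg R(v,v)\}$ and $v_{\ast}=\partial(B_D)$ yields $D=R[v_{\ast}]$, and then $v_{\ast}\in D\Leftrightarrow\neg R(v_{\ast},v_{\ast})\Leftrightarrow v_{\ast}\notin D$, the desired contradiction. The main obstacle I anticipate is bookkeeping: checking that each of $\pi$, $A_v$, $B_X$, $R$, and $D$ is genuinely available in the relevant $S_n[M]$ via full comprehension, and that ``$A_v$ is infinite'' in the sense of (\ref{eqn:cardinalityabb:3}) follows correctly from its bijection with $V$ via Proposition~\ref{eqn:equivchardedfin}. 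No further axiomatic strength beyond comprehension, ${\tt GC}$, and Infinite Products are Maxs should be required.
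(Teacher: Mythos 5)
Your proof is correct, but it takes a genuinely different route from the paper's. The paper also argues by contradiction from ``$V$ is infinite'' and also terminates in a Russell/Cantor-style diagonal, but it builds its injection from concepts to objects by analyzing the $E$-classes intrinsically: it first constructs, by induction along the global well-order, an enumeration of all finite concepts (Claims~\ref{nu:claim0}--\ref{nu:claim2}, using Infinite Products are Maxs~(\ref{eqn:InfProdMax}) and one application of ${\tt AC}$~(\ref{eqn:AC})), turns this into an injective code $\nabla$ for finite concepts, then uses ${\tt AC}$ a second time to choose representatives $S[x]$ with $\partial(S[x])=x$, and finally codes an arbitrary $X$ by the pair consisting of its representative's abstract and the $\nabla$-codes of the two finite sets by which $X$ differs from that representative. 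Your ``blow-up'' $X\mapsto B_X=\{w:\pi_1(w)\in X\}$ short-circuits all of this: since distinct $X,X'$ force $B_X\triangle B_{X'}$ to contain an entire infinite fibre $A_v$, the Nuisance relation already separates $B_X$ from $B_{X'}$, so $X\mapsto\partial(B_X)$ is injective outright. This buys a shorter argument that dispenses with both applications of ${\tt AC}$ and with the inductive enumeration of finite concepts, needing only full comprehension~(\ref{eqn:fullcomp}), the pairing bijection from~(\ref{eqn:InfProdMax}), and the closure of Dedekind-infiniteness~(\ref{eqn:cardinalityabb:3}) under bijections and supersets; what the paper's longer route buys is the reusable enumeration machinery and an explicit normal form (``representative plus finite perturbation'') for the $E$-classes. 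All the bookkeeping you flag does go through: $\pi$, the fibres $A_v$, $B_X$, $R$, and $D$ all exist by comprehension (with parameters), and your relation $R(v,w)\equiv\exists X\,(\partial(B_X)=v\wedge w\in X)$ is $\Sigma^1_1$, which is exactly the kind of instance of~(\ref{eqn:fullcomp}) the paper itself relies on elsewhere.
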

\begin{proof}
So suppose not. Then let's work deductively in the theory $A[E]$ under the assumption that the universe $V$ is infinite. Let $\omega$ denote the least limit point in the global well-order. First let's establish that there's a way to enumerate all finite concepts:
\begin{claim}\label{nu:claim0}
There is a ternary relation $R$ such that for all $X$, $\left|X\right|<\omega$ if and only if there is $a<\omega$ and $b$ such that $R[a,b]=X$, where again $R[a,b]=\{c: R(a,b,c)\}$.
\end{claim}
\noindent This claim follows trivially by comprehension from the following claim:
\begin{claim}\label{nu:claim1}
There is a ternary relation $R$ such that for all $a<\omega$, and all $X$ with $\left|X\right|\leq \left|I_a\right|$ there is $b$ with $R[a,b]=X$, where again $R[a,b]=\{c: R(a,b,c)\}$.
\end{claim}
\noindent This claim of course follows from the following claim by a single application of ${\tt AC}$~(\ref{eqn:AC}):
\begin{claim}\label{nu:claim2}
For all $a<\omega$ there is binary relation $R_a$ such that for all $X$ with $\left|X\right|\leq \left|I_a\right|$ there is $b$ with $R_a[b]=X$.
\end{claim}
\noindent We argue for this latter claim by induction on $a$. For $a=0$, it holds trivially since we may choose $R_a$ equal to the empty binary concept. Now suppose it holds for $a_0$ with witness $R_{a_0}$, and suppose $a=s(a_0)$, where this is the partial successor operation from~(\ref{eqn:defnsucconord}). By Infinite Products are Maxs~(\ref{eqn:InfProdMax}), there is a bijection $\langle \cdot, \cdot\rangle: V\times V\rightarrow V$. Then we define the binary relation $R_{a}$ as follows:
\begin{myequation}
R_{a}[b]=\{z: \exists \; b_0,c_0 \; b=\langle b_0,c_0\rangle \; \& \; (z=c_0 \vee R_{a_0}[b_0](z)) \}
\end{myequation}
So this finishes the proof of Claim~\ref{nu:claim2} and with it the proof of Claim~\ref{nu:claim0}. 

Using this enumeration of the finite concepts, let's define a partial injective map from finite concepts to objects. For this, let us fix a ternary relation $R$ as in Claim~\ref{nu:claim0}. Then consider the following partial map from concepts to objects:
\begin{myequation}
\nabla(X)=c \Longleftrightarrow \left|X\right|<\omega \; \& \; c=\min_{<}\{\langle a,b\rangle: a<\omega \; \& \; R[a,b]=X\}
\end{myequation}
Then this map is defined on all finite concepts, and on these, it is an injection. For, supposing that $\nabla(X)=\langle a,b\rangle=\nabla(Y)$, we have $X=R[a,b]=Y$. Hence, indeed $\nabla$ is a partial map from concepts to objects which is defined and injective on finite concepts. 

So now consider $\mathrm{rng}(\partial)$, where $\partial$ as usual is the abstraction operator associated to $A[E]$. As in the argument for (\ref{eqn:niceappchoice}) in the previous section, we may use ${\tt AC}$~(\ref{eqn:AC}) to show that there is a binary relation $S$ such that for all $x$ from $\mathrm{rng}(\partial)$ one has that $\partial(S[x])=x$. Then for all concepts $X$, there is unique $x$ from $\mathrm{rng}(\partial)$ such that $E(X,S[x])$. Hence there are finite concepts $F_X, G_X$ such that $X=(S[x]\cup F_X)\setminus G_X$. Then define a map $\partial^{\prime}$ from concepts to objects by $\partial^{\prime}(X)=\langle x,\langle a,b\rangle \rangle$ iff $x$ is from $\mathrm{rng}(\partial)$ and $E(X,S[x])$ and 
\begin{myequation}\label{eqn:nurussell}
\langle a,b\rangle = \min_<\{\langle c,d\rangle: \exists \; F_X, G_X \; X=(S[x]\cup F_X)\setminus G_X \; \& \; \nabla(F_X)=c \; \& \; \nabla(G_X)=d\}
\end{myequation}
Then by construction $\partial^{\prime}$ is an injection from concepts to objects, and so using the full comprehension schema, we may again replicate the Russell paradox to derive a contradiction.
\end{proof}
\noindent Using this result, we can now deduce:
\begin{prop}
$A[E]$ is naturally relatively categorical.
\end{prop}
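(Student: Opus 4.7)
The plan is to invoke Theorem~\ref{thm:ncr=iia=cca} and reduce the claim to the condition of cardinality coarsening on abstracts~(\ref{eqn:defn:CC}): it suffices to show that $A[E]$ proves $\forall \; X, Y \; ((Y \approx X \;\&\; X \subseteq \mathrm{rng}(\partial)) \rightarrow |X \triangle Y| < \omega)$. First I would recall Proposition~\ref{prop:nufinite}, which was just established, and which says that $A[E]$ deductively entails that the domain $V$ itself is finite in the Dedekind sense~(\ref{eqn:cardinalityabb:3}).

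Granted this, the remaining argument is essentially immediate. Working deductively in $A[E]$, fix $X, Y$ with $Y \approx X$ and $X \subseteq \mathrm{rng}(\partial)$. Since every concept is a subconcept of the finite $V$, both $X$ and $Y$ are finite, and hence $X \triangle Y \subseteq X \cup Y$ is finite as well. Thus $E(X,Y)$ holds, which is actually much more than cardinality coarsening on abstracts requires. Applying Theorem~\ref{thm:ncr=iia=cca} then yields the desired natural relative categoricity.

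The main obstacle has in effect already been discharged, namely the proof of Proposition~\ref{prop:nufinite}, which required a careful Russell-style construction via an enumeration of the finite concepts. Once finiteness of $V$ is in place, the relation $E$ collapses to the trivial equivalence relation on the entire power set (not just on the abstracts), so $A[E]$ trivially satisfies every condition in the right-hand column of Figure~\ref{diagram1}. In particular, the abstraction operator $\partial$ must be constant in any model of $A[E]$, and $\mathrm{rng}(\partial)$ is a singleton; this makes the isomorphism between the induced structures $\mathcal{M}_1$ and $\mathcal{M}_2$ of Definition~\ref{defn:nat:bijection} trivial, and there is no hidden subtlety beyond what Proposition~\ref{prop:nufinite} already supplies.
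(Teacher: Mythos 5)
Your proposal is correct and follows essentially the same route as the paper: appeal to Proposition~\ref{prop:nufinite} to get that $V$ is finite, conclude that $E(X,Y)$ holds for \emph{all} $X,Y$ (so in particular $A[E]$ is cardinality coarsening on abstracts), and then invoke Theorem~\ref{thm:ncr=iia=cca}. Your additional observation that $\partial$ is consequently constant with singleton range is accurate but not needed for the argument.
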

\begin{proof}
Let's work within $A[E]$. By the above result, $V$ is finite. Hence, since $\left|X \triangle Y \right|<\omega$ for all finite $X$ and $Y$, we have that $E(X,Y)$ for all $X,Y$. And then trivially $A[E]$ is cardinality coarsening on abstracts. Hence by Theorem~\ref{thm:ncr=iia=cca}, $A[E]$ is naturally relatively categorical.
\end{proof}\noindent So this proof shows us that naturally relatively categorical abstraction principles are not pairwise consistent. For, as noted earlier, Hume's Principle is naturally relatively categorical and yet it implies that $V$ is infinite, since for instance when one defines the natural numbers using the resources of Hume's Principle, they will be Dedekind-infinite. But of course one would not have expected relatively categorical concepts to be pairwise consistent. For instance, the standard relative categoricity argument for set theory can be deployed to show that the standard axioms plus ``there are no inaccessibles'', as well as the standard axioms plus ``there are inaccessibles,'' are both relatively categorical, at least assuming (as we are here) that the domain of the model is the entire universe. In \S\ref{sec05stable}, we'll note by contrast that equivalence relations which are cardinality coarsening on small concepts are pairwise consistent. So this is another way of seeing that the conditions on $A[E]$ in Figure~\ref{diagram1} don't have implications for the analogous conditions on $E$: while the Nuisance Principle is naturally relatively categorical and hence cardinality coarsening on abstracts, it is not the case that the underlying equivalence relation is cardinality coarsening on small concepts.

\subsection{The Complementation Principle}

Let's say that a concept $X$ is a \emph{complement}, abbreviated $\mathrm{Cmp}(X)$ if there is $Y\approx X$ such that~$X\sqcup Y = V$. Then we define:
\begin{myequation}
E(X,Y)\equiv (\mathrm{Cmp}(X) \vee \mathrm{Cmp}(Y))\rightarrow (X=Y \vee (X\approx Y \; \& \; X\sqcup Y =V))
\end{myequation}
\noindent This equivalence relation sorts out concepts as follows: each pair of equinumerous complementary sets constitute a separate equivalence class, everything else occupies a single ``junk'' equivalence class. 

This abstraction principle has finite models only for odd domains, and domains of sizes 2 and 4. For, in the case of odd domains, there are no complements and so there is only one equivalence class. In the case of even domains of size $2n$, the number of $E$-equivalence classes is exactly $f(n)=\frac{1}{2}\binom{2n}{n}+1$, and one has that $f(1)=2, f(2)= 4$ and $f(n)\geq 2n+1$ for $n\geq 3$, as one can establish by induction on $n$. 

Unlike the Nuisance Principle, this $A[E]$ is not naturally relatively categorical and indeed not relatively elementary equivalent. For, consider the domain $M=\{a,b,c,d\}$ of size $4$, where there are exactly 4 equivalence classes. Define two abstraction operators by \begin{myequation}\label{eqn:comp-counter}
\begin{array}{cccc}
\partial_1(\{a,b\}) = a, & \partial_1(\{a,c\})= b, & \partial_1(\{a,d\})= c, & \partial_1(\emptyset) = d, \\
\partial_2(\{a,b\}) = c, & \partial_2(\{a,c\})= b, & \partial_2(\{a,d\})= a, & \partial_2(\emptyset) = d 
\end{array}
\end{myequation}
This determines a model $\mathcal{M}$ of $A^2[E]$ with first-order part $M$. Then consider the sentence: 
\begin{myequation}\label{eqn:CP-witness}
\forall X\; [|X|=2 \rightarrow \exists Y (E(X,Y) \wedge Y(\partial_i(X))\wedge Y(\partial_i(\emptyset))]
\end{myequation}
This sentence fails in $\mathcal{M}_1$ with witness $X=\{a,b\}$ but holds in $\mathcal{M}_2$. Hence $A[E]$ is not relatively elementary equivalent and hence not naturally relatively categorical. But like with the Nuisance Principle, $A[E]$ 
proves $V$ is finite.  
Hence, it is not this feature alone which permitted the Nuisance Principle to be naturally relatively categorical.


\section{Stability Criteria and the Bad Company Problem}\label{sec05stable}

One of locus of activity on abstraction principles in recent years has been stability criteria (cf. Cook~\cite{Cook2012aa}). Some of the key stepping stones in this hierarchy are the following:

\begin{myenumerate}

\item The abstraction principle $A[E]$ is \emph{stable} if there is some cardinal~$\lambda$ such that, for all~$\kappa\geq \lambda$,~$A[E]$ has a standard model of size~$\kappa$.\label{eqn:defn:stable}

\item The abstraction principle $A[E]$ is \emph{strongly stable} if there is some cardinal~$\lambda$ such that, for all cardinals~$\kappa$,~$A[E]$ has a standard model of size~$\kappa$ if and only if~$\kappa\geq\lambda$.\label{eqn:defn:superstable}

\end{myenumerate}
\noindent As above, a structure~$\mathcal{M}=(M, S_1[M], S_2[M], \ldots)$ is said to be  \emph{standard} if~$S_n[M]=P(M^n)$ for each~$n\geq 1$ (cf. \S~\ref{sec02}). Further, here one identifies the \emph{cardinality of~$\mathcal{M}$} as~$\left|M\right|$, i.e. the cardinality of its first-order part. So this convention on the cardinality of the many-sorted structure $\mathcal{M}$ differs from other settings in which it would be more natural to define its cardinality as the cardinality of the union of its various domains  (cf. \cite{Walsh2014aa} \S{6.1} p. 107 equation (18), \cite{Manzano1996} p. 231 and \cite{Ebbinghaus1985aa} pp. 32, 64).

These criteria have been proposed as solutions to the so-called \emph{Bad Company Problem}, namely the problem of identifying criteria on abstraction principles which would ensure consistency of $A[E]$; perhaps also joint consistency with like abstraction principles and the ability to interpret mathematics of various sorts. Of course, the relative categoricity notions studied here are not candidates for a solution to the Bad Company Problem since they do not ensure consistency. However, the sufficient conditions on the equivalence relations themselves that we have identified for relative categoricity \emph{do} ensure consistency, and indeed stability and joint consistency. In particular, we have the following results:
  
\begin{prop}\label{prop:stabileforcc} Suppose that $E(X,Y) $ is an $L_0$-formula and suppose that~$E$ is cardinality coarsening on small concepts. Then for all infinite cardinals~$\kappa$ there is a standard model of~$A[E]$ of cardinality~$\kappa$.
\end{prop}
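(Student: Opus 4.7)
Given an infinite cardinal $\kappa$, the plan is to take any set $M$ with $|M|=\kappa$, form the standard $L_0$-structure $\mathcal{M}_0=(M,P(M),P(M^2),\ldots)$, and adjoin a well-ordering $<$ of $M$ selected via the metatheoretic axiom of choice. Standardness automatically validates the full comprehension schema~(\ref{eqn:fullcomp}) and ${\tt AC}$~(\ref{eqn:AC}), while $<$ validates ${\tt GC}$~(\ref{eqn:gcschema}). All that remains is to define an abstraction operator $\partial:P(M)\to M$ satisfying $\partial(X)=\partial(Y)\iff E(X,Y)$; setting $\partial(X):=\iota([X]_E)$ reduces this to producing an injection $\iota:P(M)/E\to M$.

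Write $\mu=|P(M)/E|$, computed metatheoretically. Because $\mathcal{M}_0$ is standard, this coincides with $|P(V)/E|$ as interpreted in $\mathcal{M}_0$ via equation~(\ref{eqn:whatimeans}). By hypothesis, $E$ is cardinality coarsening on small concepts, so $\mathcal{M}_0$ satisfies that $|X|=|Y|\leq \mu$ implies $E(X,Y)$. Hence for each cardinal $\alpha\leq \mu$ there is at most one $E$-class containing a concept of cardinality $\alpha$. The key claim is then $\mu\leq \kappa$. If $\mu<\kappa$ this is immediate. If $\mu\geq \kappa$, then every subset of $M$ has cardinality $\leq \kappa \leq \mu$, so every $E$-class is entirely determined by the common cardinality of its members, whence $\mu$ is bounded above by the number of cardinals $\leq \kappa$, which is itself at most $\kappa$ since $\kappa$ is infinite. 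With $\mu \leq \kappa$ in hand, the metatheoretic axiom of choice yields the required injection $\iota:P(M)/E\to M$, the prescription $\partial(X):=\iota([X]_E)$ defines the abstraction operator, and $(M,P(M),P(M^2),\ldots,<,\partial)$ is then a standard model of $A[E]$ of cardinality $\kappa$.

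The only step demanding care is the translation of ``cardinality coarsening on small concepts'' into an extensional statement about $\mathcal{M}_0$: the formal condition $|X|\leq |P(V)/E|$ is given second-order expression as equation~(\ref{eqn:whatimeans}), and one must check that inside the standard structure $\mathcal{M}_0$ this agrees with the metatheoretic inequality $|X|\leq \mu$ via standard comprehension. Once that correspondence is in place the counting of $E$-classes by cardinality is routine, and I do not anticipate a substantive obstacle beyond this bookkeeping.
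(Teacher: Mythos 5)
Your proposal is correct and follows essentially the same route as the paper: the paper isolates your key claim $\mu\leq\kappa$ as a separate ``Coarsening Dichotomy'' lemma (either $|P(\kappa)/E|<\kappa$, or every concept is small and the $E$-partition is coarser than the equinumerosity partition, giving an injection from $E$-classes into equinumerosity classes and hence the bound $\kappa$), and then builds the model exactly as you do by choosing an injection $P(\kappa)/E\to\kappa$ and setting $\partial(X)=\widehat{\partial}([X]_E)$. One small wording caveat: an $E$-class need not have a ``common cardinality of its members'' (coarsening only gives that equal cardinality implies $E$-equivalence, not conversely), but your counting still goes through since each cardinal $\leq\kappa$ determines a unique $E$-class and these exhaust $P(M)/E$.
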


\begin{prop}\label{prop:stableforbcc} Suppose that $E(X,Y) $ is an $L_0$-formula and suppose that $E$ is bicardinality coarsening. Then for all infinite cardinals $\kappa$ there is a standard model of $A[E]$ of cardinality $\kappa$.
\end{prop}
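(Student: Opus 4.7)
The plan is to build, for each infinite cardinal $\kappa$, a standard model of $A[E]$ whose first-order domain $M$ has cardinality $\kappa$. By the discussion in \S\ref{sec02}, a standard structure of the form $\mathcal{M}=(M,P(M),P(M^2),\ldots,<,\partial)$ automatically satisfies the full comprehension schema~(\ref{eqn:fullcomp}), and by metatheoretic choice we may fix a well-order $<$ of $M$ so that global choice ${\tt GC}$~(\ref{eqn:gcschema}) and the axiom of choice ${\tt AC}$~(\ref{eqn:AC}) hold. So the real task is to produce the operator $\partial:P(M)\rightarrow M$ that witnesses the abstraction biconditional, namely $\partial(X)=\partial(Y) \Leftrightarrow E(X,Y)$.

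The key step is a counting argument on $E$-equivalence classes. Because $E$ is bicardinality coarsening, any two subsets $X,Y\subseteq M$ with $|X|=|Y|$ and $|M\setminus X|=|M\setminus Y|$ already satisfy $E(X,Y)$. Hence the partition of $P(M)$ into $E$-equivalence classes is coarser than the partition determined by the pair of cardinal invariants $(|X|,|M\setminus X|)$. The number of pairs $(\alpha,\beta)$ of cardinals $\leq \kappa$ is at most $\kappa$ for infinite $\kappa$, since the cardinals $\leq \kappa$ form a subset of the ordinals $\leq \kappa$ and the latter set has cardinality $\kappa$. Therefore $|P(M)/E|\leq \kappa=|M|$.

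Once this inequality is in hand, the construction finishes quickly: using metatheoretic choice, fix an injection $\iota:P(M)/E\rightarrow M$, and define $\partial(X)=\iota([X]_E)$. Then $\partial(X)=\partial(Y)$ iff $\iota([X]_E)=\iota([Y]_E)$ iff $[X]_E=[Y]_E$ iff $E(X,Y)$, so $\mathcal{M}$ satisfies the abstraction principle $A[E]$. Together with the standardness of $\mathcal{M}$ and the choice of $<$, this gives a standard model of $A[E]$ of cardinality exactly $\kappa$.

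The one point requiring a modicum of care is that $E$ must genuinely be an equivalence relation on $P(M)$ for the quotient $P(M)/E$ to make sense; but this is guaranteed by the standing assumption recorded at the end of \S\ref{sec03} that $E$ is provably an equivalence relation in the background second-order logic, which in particular ensures it is an equivalence relation on each standard structure. No step looks genuinely hard: the only substantive input is the cardinal-arithmetic bound on the number of equivalence classes, and that is immediate from bicardinality coarsening.
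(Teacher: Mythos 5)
Your proposal is correct and follows essentially the same route as the paper: the paper likewise bounds $\left|\nicefrac{P(\kappa)}{E}\right|$ by the number of bicardinality classes via the injection $[X]_{E_0}\mapsto \langle \left|X\right|, \left|\kappa\setminus X\right|\rangle$, obtains $\left|\nicefrac{P(\kappa)}{E}\right|\leq \kappa\cdot\kappa\leq\kappa$, and then defines $\partial(X)=\widehat{\partial}([X]_E)$ from an injection $\widehat{\partial}:\nicefrac{P(\kappa)}{E}\rightarrow\kappa$ exactly as you do. The only cosmetic difference is that the paper routes the final model construction through the proof of the preceding proposition rather than repeating it.
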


These two propositions imply that the equivalence relations satisfying these constraints are stable~(\ref{eqn:defn:stable}). However, this cannot be improved to strong stability~(\ref{eqn:defn:superstable}). For, consider the following equivalence relation:
\begin{myequation}\label{eqn:SEDcounter}
E(X,Y) \equiv[\left|V\right|<\omega \; \& \; \textrm{$|V|$ is even}\rightarrow X\approx Y] \wedge [\neg(\left|V\right|<\omega \; \& \; \textrm{$|V|$ is even})\rightarrow X=X]
\end{myequation}
This equivalence relation is cardinality coarsening on small concepts. In the case where $\left|V\right|<\omega$ is even, we have that $E$ is equinumerosity, so that $\left|\nicefrac{P(V)}{E}\right|\geq \left|V\right|+1$, all concepts are small, and thus trivially $X\approx Y $ implies $E(X,Y)$. But in the other case, $E$ says that all concepts are equivalent, so that again we trivially have that $X\approx Y $ implies $E(X,Y)$. So this $E$ is cardinality coarsening on small concepts, but $A[E]$ is not strongly  stable~(\ref{eqn:defn:superstable}). For $A[E]$ is satisfiable on all finite  odd domains but no finite even domains.



Cook~\cite{Cook2012aa} recommends strong stability as the appropriate solution to the Bad Company Problem, and so the previous examples indicate that the solution offered by cardinality coarsening on small concepts and bicardinality coarsening will be extensionally distinct from Cook's solution. However, it is worth noting that  
Propositions~\ref{prop:stabileforcc} and~\ref{prop:stableforbcc} imply that if $E$ is cardinality coarsening on small concepts (respectively, bicardinality coarsening), all structures witnessing that $E$ is not strongly stable will be  finite.  



But cardinality coarsening on small concepts and bicardinality coarsening have a more mixed scorecard on  other desiderata  taken to be relevant to judging the success of  proposed solutions to the Bad Company problem. On the one hand, Proposition~\ref{prop:stabileforcc} and Proposition~\ref{prop:stableforbcc} imply that all these abstraction principles are jointly consistent since they all have models of any infinite cardinality. But by the same token, our conditions on $E$ restrict the mathematics that can be recovered in the theory $A[E]$.  Inspection of the below proof of these 
propositions show that the joint theory of all these abstraction principles will be interpretable in second-order Peano arithmetic ${\tt PA}^2$ (cf. \cite{Simpson1999} pp. 4 ff for the definition of ${\tt PA}^2$ and cf. \cite{Walsh2014aa} \S{2} for the definition of interpretation). This is of course because a standard model of cardinality~$\omega$ is the underlying first-order domain of the standard model of ${\tt PA}^2$, and the proofs of the propositions for the case $\kappa=\omega$ are formalizable in ${\tt PA}^2$. Hence, there will be no hope of interpreting, e.g., standard ${\tt ZFC}$-set theory by restricting attention to such abstraction principles, because ${\tt ZFC}$ proves the consistency of ${\tt PA}^2$ and so ${\tt PA}^2$ cannot interpret ${\tt ZFC}$.

But it's difficult to  compare this directly to the situation of stability or strong stability. For, suppose that $\Phi$ is a sentence of second-order logic that is true on all and only standard models of cardinality $\geq \lambda$. Then $A[E]$ will be strongly stable for $E(X,Y)\equiv (X=Y) \vee \Phi$ (disjunctive abstraction principles like this are due to Heck~\cite{Heck-Jr.1992aa}). So such $A[E]$ will be able to encode a good deal of interpretability strength, at least when interpretability with parameters is allowed (cf. \cite{Walsh2014aa} \S{2} for the definition of interpretations with parameters). For instance, $\Phi$ could say that there is a model of ${\tt ZFC}^2$. It's unknown to us whether the interpretability strength-- where the interpretation is \emph{without} parameters-- of the theories associated to all stable or strongly stable abstraction principles is strictly above ${\tt PA}^2$. This might be related to the fact that stability conditions often depend on undecidable questions in set theory.  For instance, the claim that  the abstraction principle New V is strongly stable is equivalent to the generalized continuum hypothesis (cf. \cite{Shapiro1999ac} p. 315,  \cite{Cook2007aa} p. 297). Our conditions, by contrast, concern only what is demonstrable in a sound and complete deductive system.
	
So let's proceed to the proofs of Proposition~\ref{prop:stabileforcc} and Proposition~\ref{prop:stableforbcc}. In these proofs, we employ the notion on equivalence classes from the close of \S\ref{sec02}. We begin with a helpful lemma: 

\begin{lem}\label{lem:dichtomylem}
(\emph{Coarsening Dichotomy}) Suppose that~$E$ is cardinality coarsening on small concepts. Then for all infinite~$\kappa$ we have  $\left| \nicefrac{P(\kappa)}{E}\right| <\kappa$ or $\left| \nicefrac{P(\kappa)}{E}\right|=\left| \nicefrac{P(\kappa)}{\approx}\right|=\kappa$, 
and hence in either case we have~$\left|\nicefrac{P(\kappa)}{E}\right|\leq \kappa$.
\end{lem}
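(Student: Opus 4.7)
The plan is a direct case split on the size of $\lambda := \left|\nicefrac{P(\kappa)}{E}\right|$, comparing it to $\kappa$. Since we are working with a standard model whose first-order domain has cardinality $\kappa$, the concept~$V$ from the statement of cardinality coarsening on small concepts is interpreted as $\kappa$ itself, and all cardinality comparisons reduce to set-theoretic cardinal inequalities in the ambient metatheory. In the easy case $\lambda < \kappa$, the first disjunct of the dichotomy is immediate and there is nothing more to do.

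In the remaining case $\lambda \geq \kappa$, every subset $X \subseteq \kappa$ automatically satisfies $|X| \leq \kappa \leq \lambda$, so every subset of $\kappa$ qualifies as ``small'' in the sense of the hypothesis. Applying cardinality coarsening on small concepts to pairs of subsets of $\kappa$, we conclude that any two equinumerous subsets of $\kappa$ are $E$-equivalent; equivalently, each $E$-equivalence class is a union of $\approx$-equivalence classes. The identity on $P(\kappa)$ therefore descends to a surjection from $\nicefrac{P(\kappa)}{\approx}$ onto $\nicefrac{P(\kappa)}{E}$, and hence $\lambda = \left|\nicefrac{P(\kappa)}{E}\right| \leq \left|\nicefrac{P(\kappa)}{\approx}\right|$.

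The final step is the standard bound $\left|\nicefrac{P(\kappa)}{\approx}\right| \leq \kappa$. Since $\approx$ on $P(\kappa)$ partitions subsets strictly by cardinality, $\left|\nicefrac{P(\kappa)}{\approx}\right|$ is just the number of cardinals in the ordinal interval $[0,\kappa]$, which is bounded by $|\kappa+1| = \kappa$. Chaining $\kappa \leq \lambda \leq \left|\nicefrac{P(\kappa)}{\approx}\right| \leq \kappa$ forces equality throughout, producing the second disjunct $\lambda = \left|\nicefrac{P(\kappa)}{\approx}\right| = \kappa$.

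No step presents a substantive obstacle: the whole argument is a single dichotomy together with one elementary counting fact about how many cardinals lie below $\kappa$. The only bookkeeping to watch is that the hypothesis ``cardinality coarsening on small concepts'' is available as an axiom in the standard model of cardinality $\kappa$, which is automatic since the hypothesis of the lemma asserts this as a schema derivable in $A[E]$.
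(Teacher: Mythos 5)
Your proposal is correct and follows essentially the same route as the paper's proof: in the case $\left|\nicefrac{P(\kappa)}{E}\right|\geq\kappa$ you observe that every subset of $\kappa$ is ``small,'' so cardinality coarsening on small concepts makes $E$ coarser than $\approx$, and then you bound $\left|\nicefrac{P(\kappa)}{\approx}\right|$ by the number of cardinals $\leq\kappa$, which is at most $\kappa$. The only cosmetic difference is that you compare the quotients via the induced surjection $\nicefrac{P(\kappa)}{\approx}\twoheadrightarrow\nicefrac{P(\kappa)}{E}$ where the paper chooses representatives and builds an injection in the other direction; these are interchangeable given choice in the metatheory.
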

\begin{proof}
So suppose that~$\left| \nicefrac{P(\kappa)}{E}\right| \geq \kappa$. We must show that~$ \left| \nicefrac{P(\kappa)}{E}\right|=\left| \nicefrac{P(\kappa)}{\approx}\right|=\kappa$. Since~$\left|\nicefrac{P(\kappa)}{E}\right| \geq \kappa$, we of course have that $X\subseteq \kappa$ implies $\left|X\right|\leq \left|\nicefrac{P(\kappa)}{E}\right|$. 
Then we have:
\begin{myequation}
[X,Y\subseteq \kappa \; \& \; \left|Y\right|= \left|X\right|] \Longrightarrow [\left|Y\right|=\left|X\right|\leq \left|\nicefrac{P(\kappa)}{E}\right|] 
\end{myequation}
and the consequent of this conditional clearly implies  $E(X,Y)$ since~$E$ is cardinality coarsening on small concepts. In terms of the equivalence classes, this says that the partition of~$P(\kappa)$ by~$E$ is coarser than the partition of~$P(\kappa)$ by equinumerosity. Then choose representatives~$X_0, X_1, \ldots$ for the~$E$-equivalence classes, so that~$\nicefrac{P(\kappa)}{E}=\bigsqcup_{i\in I} [X_i]_{E}$. Then define~$I:\nicefrac{P(\kappa)}{E}\rightarrow \nicefrac{P(\kappa)}{\approx}$ by~$I([X_i]_E)=[X_i]_{\approx}$. Then we claim that~$I$ is an injection. For, suppose that~$I([X_i]_E)=I([X_j]_E)$ so that~$[X_i]_{\approx}=[X_j]_{\approx}$. Then~$\left|X_i\right|=\left|X_j\right|$ and so by the previous equation we have~$E(X_i,X_j)$ and hence~$[X_i]_E=[X_j]_E$ and so~$i=j$ since the~$X_i,X_j$ are representatives of the~$E$-equivalence classes. Since~$I:\nicefrac{P(\kappa)}{E}\rightarrow \nicefrac{P(\kappa)}{\approx}$ is indeed injective, we then have the following, where the last inequality appeals to the infinitude of~$\kappa$: $\left| \nicefrac{P(\kappa)}{E}\right| \leq \left| \nicefrac{P(\kappa)}{\approx}\right| \leq \kappa$. Since we're assuming~$\kappa\leq \left| \nicefrac{P(\kappa)}{E}\right|$, we are done.
\end{proof}
So we can now prove Proposition~\ref{prop:stabileforcc}:
\begin{proof}
Let~$\kappa$ be an infinite cardinal, and consider the standard structure, where $<$ is the natural well-ordering on the cardinal: $
\mathcal{M}_0=(\kappa, P(\kappa), P(\kappa\times \kappa), \ldots, <)$. 
By the Coarsening Dichotomy, we have that~$\left|\nicefrac{P(\kappa)}{E}\right| \leq \kappa$. Choose an injection~$\widehat{\partial}: \nicefrac{P(\kappa)}{E}\rightarrow  \kappa$. Then we may define the map~$\partial:P(\kappa)\rightarrow \kappa$ by~$\partial(X)=\widehat{\partial}([X]_E)$. Then by construction we have that~$\mathcal{M}=(\kappa, P(\kappa), P(\kappa\times \kappa), \ldots, <, \partial)$ is a model of~$A[E]$ of cardinality~$\kappa$.
\end{proof}

Similarly, we can prove Proposition~\ref{prop:stableforbcc}:
\begin{proof}
Let's use $E_0$ as an abbreviation for the equivalence relation of bicardinality.  So since $E$ is bicardinality coarsening, by definition we have that $
E_0(X,Y)$ implies $E(X,Y)$. 
So as in the proof of the Coarsening Dichotomy, this implies that $\left|\nicefrac{P(\kappa)}{E}\right|\leq \left|\nicefrac{P(\kappa)}{E_0}\right|$. Then one may argue that $
\left|\nicefrac{P(\kappa)}{E}\right|\leq \left|\nicefrac{P(\kappa)}{E_0}\right|  \leq \kappa \cdot \kappa \leq \kappa$. 
 Since~$\kappa$ is infinite, the only non-trivial inequality is the second. For this, consider the map $[X]_{E_0} \mapsto \langle \left|X\right|, \left|\kappa\setminus X\right|\rangle$.  This is an injection because we have by definition of~$E_0$ that $E_0(X,Y)$ iff $ \langle \left|X\right|, \left|\kappa\setminus X\right|\rangle=\langle \left|Y\right|, \left|\kappa\setminus Y\right|\rangle$. From $\left|\nicefrac{P(\kappa)}{E}\right|\leq \kappa$, we can use the same construction as employed in the proof of the above proposition to build models of $A[E]$.
\end{proof}

Since $E$ being bicardinality coarsening is a sufficient condition for $A[E]$ being surjectively relatively categorical, it's natural to ask about when there are standard models of $A[E]$ on which the abstraction operator is surjective.
\begin{prop}\label{prop:existencesurject}
Suppose that $E$ is bicardinality coarsening. Then for all infinite cardinals $\kappa$, there is a standard model of $A[E]$ of cardinality $\kappa$ wherein the abstraction operator is surjective if and only if $\left|\nicefrac{P(\kappa)}{E}\right|=\left|\{\lambda: \lambda<\kappa\}\right| = \kappa$.
\end{prop}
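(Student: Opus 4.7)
The plan is to split the biconditional into its two directions and to observe that, given the bicardinality coarsening hypothesis, the clause $\left|\nicefrac{P(\kappa)}{E}\right|=\kappa$ carries the constructive content while the clause $\left|\{\lambda<\kappa\}\right|=\kappa$ is a free consequence extracted by counting bicardinality classes.

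For the ``if'' direction, assume $\left|\nicefrac{P(\kappa)}{E}\right|=\kappa$; note that the second clause is not needed for the construction. I would pick a bijection $\widehat{\partial}\colon \nicefrac{P(\kappa)}{E}\to\kappa$, define $\partial\colon P(\kappa)\to\kappa$ by $\partial(X)=\widehat{\partial}([X]_E)$, and take the standard structure $\mathcal{M}=(\kappa, P(\kappa), P(\kappa^2),\ldots, <, \partial)$ with $<$ the canonical well-order on $\kappa$. Then $\partial$ is surjective because $\widehat{\partial}$ is, and $\partial(X)=\partial(Y)$ iff $E(X,Y)$ by construction, so $\mathcal{M}\models A[E]$. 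This essentially mimics the closing construction in the proof of Proposition~\ref{prop:stabileforcc}.

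For the ``only if'' direction, assume such a model $\mathcal{M}=(\kappa, P(\kappa),\ldots,<,\partial)$ exists. The abstraction axiom $\partial(X)=\partial(Y)\leftrightarrow E(X,Y)$ exhibits $\partial$ as factoring through the quotient as an injection $\widehat{\partial}\colon \nicefrac{P(\kappa)}{E}\to\kappa$, and surjectivity of $\partial$ promotes $\widehat{\partial}$ to a bijection, giving $\left|\nicefrac{P(\kappa)}{E}\right|=\kappa$. To then deduce $\left|\{\lambda<\kappa\}\right|=\kappa$, I let $E_0$ denote bicardinality. Since $E$ is bicardinality coarsening, $E_0(X,Y)\Rightarrow E(X,Y)$, so the identity on $P(\kappa)$ descends to a surjection $\nicefrac{P(\kappa)}{E_0}\to\nicefrac{P(\kappa)}{E}$, giving $\left|\nicefrac{P(\kappa)}{E_0}\right|\geq \kappa$. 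I would then bound $\left|\nicefrac{P(\kappa)}{E_0}\right|$ above via the pair map $[X]_{E_0}\mapsto(|X|,|\kappa\setminus X|)$: by Infinite Sums are Maxs, at least one coordinate of any realized pair must equal $\kappa$, so the realized pairs lie in $\{(\kappa,\mu):\mu\leq\kappa\}\cup\{(\mu,\kappa):\mu\leq\kappa\}$, giving $\left|\nicefrac{P(\kappa)}{E_0}\right|\leq 2\cdot\left|\{\lambda\leq\kappa\}\right|+1$. Combined with the trivial upper bound $\left|\{\lambda<\kappa\}\right|\leq\kappa$ (cardinals below $\kappa$ inject into $\kappa$ as ordinals), this pins down $\left|\{\lambda<\kappa\}\right|=\kappa$.

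The main obstacle is the upper bound $\left|\nicefrac{P(\kappa)}{E_0}\right|\leq \left|\{\lambda\leq\kappa\}\right|$, where Infinite Sums are Maxs does the decisive work of collapsing a \emph{a priori} product-style count of pairs onto a linear one; this is the only place real arithmetic intervenes, and it is precisely what converts the tautological first clause $\left|\nicefrac{P(\kappa)}{E}\right|=\kappa$ into the nontrivial fixed-point-flavored second clause. Everything else is assembly using the tools already developed in the proofs of Proposition~\ref{prop:stabileforcc} and Proposition~\ref{prop:stableforbcc}.
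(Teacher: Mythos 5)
Your proof is correct and follows essentially the same route as the paper's: the ``if'' direction reuses the quotient-injection construction from Proposition~\ref{prop:stabileforcc} upgraded to a bijection, and the ``only if'' direction extracts $\left|\nicefrac{P(\kappa)}{E}\right|=\kappa$ from surjectivity and then squeezes $\left|\nicefrac{P(\kappa)}{E_0}\right|$ between $\kappa$ and the count of cardinality pairs via the map $[X]_{E_0}\mapsto\langle\left|X\right|,\left|\kappa\setminus X\right|\rangle$. Your explicit appeal to Infinite Sums are Maxs to collapse the realized pairs onto a linear count is just a more detailed rendering of the paper's step identifying $\left|\nicefrac{P(\kappa)}{E_0}\right|$ with $\left|\nicefrac{P(\kappa)}{\approx}\right|$ for infinite $\kappa$.
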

\begin{proof}
First suppose that there is a standard model of $A[E]$ of cardinality $\kappa$ where the abstraction operator is surjective. Then surjectivity implies that $\kappa= \left|\nicefrac{P(\kappa)}{E}\right|$. Then as in the proof of the Proposition~\ref{prop:stabileforcc}, we may argue that $\kappa= \left|\nicefrac{P(\kappa)}{E}\right|\leq  \left|\nicefrac{P(\kappa)}{E_0}\right| \leq \kappa$, so that in fact we have an equality. Since $\kappa$ is infinite, one further has that $\left|\nicefrac{P(\kappa)}{E_0}\right|=\left|\nicefrac{P(\kappa)}{\approx}\right|$. But trivially one has that $\left|\nicefrac{P(\kappa)}{\approx}\right|=\left|\{\lambda: \lambda<\kappa\}\right|$. For the converse, suppose that $\left|\nicefrac{P(\kappa)}{E}\right|=\left|\{\lambda: \lambda<\kappa\}\right| = \kappa$, so that trivially $\left|\nicefrac{P(\kappa)}{E}\right|= \kappa$. Then the model construction from the previous proposition yields a model of $A[E]$ in which the abstraction operator is surjective. 
\end{proof}

\section{Antonelli and Fine on Logicality and Invariance}\label{sec05:aldo}

The famous Tarski-Sher thesis on logicality (\cite{Tarski1986aa}, \cite{Sher1991aa}) suggests that logical notions are those that are invariant under all permutations of the domain. This can be made precise as follows. If~$\pi:M\rightarrow M$ is a permutation, then~$\pi$ induces permutations~$\overline{\pi}:M^n\rightarrow M^n$ and~$\overline{\pi}:P(M)\rightarrow P(M)$. By iterating, a permutation~$\pi: M\rightarrow M$ induces a permutation of any sort of~$\omega$-th order higher-order structure with first-order domain~$M$. Then one says:
\begin{myenumerate}
\item A subset~$X$ of any sort of this structure is \emph{permutation-invariant} if~$\overline{\pi}(X)=X$ holds for all permutations~$\pi:M\rightarrow M$.\label{eqn:defn:permTS}
\end{myenumerate}
So part of what we want to do in this section is to carefully distinguish this notion coming from Tarski-Sher from our notion of permutation invariance for abstraction principles~(\ref{eqn:defn:piI}). To illustrate the Tarski-Sher notion, consider~$X=\{(A,B,C)\in (P(M))^3: A\cap B=C\}$. Then for any permutation~$\pi:M\rightarrow M$, we have 
\begin{myequation}
A\cap B=C \Longleftrightarrow \overline{\pi}(A\cap B)=\overline{\pi}(C) \Longleftrightarrow  \overline{\pi}(A)\cap \overline{\pi}(B)=\overline{\pi}(C) 
\end{myequation}
So intersection comes out logical in this sense, and similarly for all the other boolean operations. 

Antonelli (\cite{Antonelli2010aa}) distinguishes several ways that abstraction principles and their associated equivalence relations can be invariant. One way is in terms of the abstraction operator itself. Since the abstraction operator is a function from concepts to objects, its graph is a certain collection of ordered pairs of concepts and objects. Hence an abstraction principle will be logical in the sense of Tarski-Sher just if this collection of ordered pairs is closed under the taking of permutations. This of course happens if and only if $\partial(X)=a$ implies $\partial(\overline{\pi}(X))=\pi(a)$ and vice-versa for all permutations $\pi:V\rightarrow V$.  And this is equivalent to the requirement that  $\partial(\overline{\pi}(X))=\pi(\partial(X))$ for all permutations $\pi:V\rightarrow V$. Antonelli calls abstraction principles which satisfy this condition \emph{objectually invariant} (\cite{Antonelli2010aa} p. 286). However, objectually invariant abstraction operators are rare. For instance, Hume's Principle is not objectually invariant, nor is the Bicardinality Principle. Hence, if one insisted upon the logicality of abstraction \emph{operators} in the sense of Tarski-Sher, then as Antonelli notes this would rule out most interesting examples (cf. \cite{Antonelli2010aa} p. 286).

The notion which we are calling permutation invariance~(\ref{eqn:defn:piI}) is called \emph{simple invariance} by Antonelli (\cite{Antonelli2010aa} p. 286). This notion was studied earlier in Burgess' discussion of Fine's work on abstraction principles (\cite{Burgess2005} p. 171). However, the notion which Fine himself was primarily interested in was the permutation invariance of the equivalence relation itself (cf. \cite{Fine2002} p. 111):
\begin{myequation}\label{eqn:finevinara}
\forall \; \mbox{ bijection } \pi:V\hspace{-1mm}\rightarrow\hspace{-1mm}V \; (E(X,Y)\rightarrow E(\pi(\overline{X}), \pi(\overline{Y}))
\end{myequation}
This of course is just the logicality of the equivalence relation~$E$ in the sense of Tarski-Sher. However, Fine rather motivates this constraint on equivalence relations by reference to Frege's ideas about the generality of logic (\cite{Fine2002} p. 109, cf. \cite{MacFarlane2002aa} p. 34), and does not at all invoke the Tarski-Sher thesis. But whatever its motivation, all the equivalence relations $E$ studied here will satisfy this constraint since they are assumed to be formulas in the background signature $L_0$ (cf. Definition~\ref{defn:L000}), and so will all be invariant under permutations in the sense of~(\ref{eqn:finevinara}).

Antonelli's work on abstraction principles and notions of permutation invariance (\cite{Antonelli2010aa}) complements his work on abstraction principles and generalized quantifiers (\cite{Antonelli2010ac,Antonelli2010ab}). In this work, the idea was to think about the equivalence relations featuring in abstraction principles as examples of generalized quantifiers. For instance, the truth-condition of the sentence ``Just as many students [A] as teachers [B] are hockey fans [C],'' is given by the generalized quantifier $Q(A,B,C)\equiv \left|A\cap C\right|=\left|B\cap C\right|$. So one could then view  Hume's Principle as yielding a way to provide generalized quantifiers with first-order truth-conditions. The connection to permutation invariance in the sense of (\ref{eqn:defn:permTS}) and (\ref{eqn:finevinara}) is that this is one condition among many that have been developed for assaying which generalized quantifiers actually occur in natural language (cf. \cite{Peters2008aa} pp. 157, 330,  \cite{Keenan1985aa} p. 77).

However, neither Antonelli nor Burgess is committed to there being any positive reason for insisting on the constraint of permutation invariance in the sense of~(\ref{eqn:defn:piI}). It clearly does not follow from the Tarski-Sher logicality constraint on the abstraction operator or on the underlying equivalence relation. Further, it's not obviously required by the generality of logic since there are many equivalence relations which are expressible purely in the language~$L_0$ of our background second-order logic which are not permutation invariant in the sense of~(\ref{eqn:defn:piI}). However, our Theorem~\ref{thm:coverthm} suggests an instrumental reason to be interested in equivalence relations which are permutation invariant in the sense of~(\ref{eqn:defn:piI}). For, it indicates that its a necessary and sufficient condition for a type of relative categoricity. So if one was interested in determinacy of truth-value in the sense of natural relative categoricity, this result would give one a reason to be interested in permutation invariance in the sense of~(\ref{eqn:defn:piI}).

Fine was himself interested in determinacy and established several categoricity results. His first result concerns what we called cardinality coarsening~(\ref{eqn:RC3xsemi}), a notion which Fine calls \emph{numericality} (\cite{Fine2002} p. 126). This first categoricity theorem of Fine's can be stated in our terminology as follows:
\begin{thm}\label{thm:finethm} (\cite{Fine2002} p. 126) Suppose that $\mathcal{M}$ is a model of $A^2[E]$ as in equation~(\ref{eqn:1}) where $E$ is cardinality coarsening, and suppose that $\mathcal{M}$ models $\left|M\setminus \mathrm{rng}(\partial_1)\right|=\left|M\setminus \mathrm{rng}(\partial_2)\right|$. For each $i\in \{1,2\}$, consider the following induced models
 \begin{myequation}\label{eqn:fineinduced}
\mathcal{N}_i=(M, S_1[M], S_2[M], \ldots, \partial_i)
\end{myequation}
 Then $\mathcal{N}_1$ is isomorphic to $\mathcal{N}_2$.
\end{thm}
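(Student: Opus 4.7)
The plan is to construct an explicit isomorphism $\pi: \mathcal{N}_1 \to \mathcal{N}_2$ by extending the natural bijection $\Gamma$ from Definition~\ref{defn:nat:bijection} to all of $M$. Recall that $\Gamma:\mathrm{rng}(\partial_1)\to \mathrm{rng}(\partial_2)$ given by $\Gamma(\partial_1(X))=\partial_2(X)$ is already a bijection between the ranges. By the hypothesis $|M\setminus \mathrm{rng}(\partial_1)|=|M\setminus \mathrm{rng}(\partial_2)|$, fix a witnessing bijection $\Delta:M\setminus \mathrm{rng}(\partial_1)\to M\setminus \mathrm{rng}(\partial_2)$, and define $\pi:M\to M$ by
\begin{myequation*}
\pi\upharpoonright \mathrm{rng}(\partial_1)=\Gamma, \qquad \pi\upharpoonright (M\setminus \mathrm{rng}(\partial_1)) = \Delta.
\end{myequation*}
Since the two pieces have disjoint ranges that jointly exhaust $M$, $\pi$ is a bijection of $M$, and its graph lives in $S_2[M]$ by comprehension (using that both $\Gamma$ and $\Delta$ have graphs in $S_2[M]$, the former by equation~(\ref{eqn:defn:natret})).

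Next I would verify that $\pi$ is an isomorphism of the $L_0[\partial]$-structures $\mathcal{N}_1$ and $\mathcal{N}_2$ via the atomic characterization recorded in equations~(\ref{eqn:defn:iso:rel})--(\ref{eqn:defn:iso:func}). The predication relations in $L_0$ are automatically preserved because $\overline{\pi}(X)=\{\pi(x):x\in X\}$ by definition. There are no constant symbols. So the only thing to check is equation~(\ref{eqn:defn:iso:func}) for the function symbol $\partial$: namely, that for every concept $X\in S_1[M]$,
\begin{myequation*}
\pi(\partial_1(X)) = \partial_2(\overline{\pi}(X)).
\end{myequation*}
On the left, $\partial_1(X)\in \mathrm{rng}(\partial_1)$, so $\pi(\partial_1(X))=\Gamma(\partial_1(X))=\partial_2(X)$ by definition of the natural bijection. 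Hence it suffices to show $\partial_2(X)=\partial_2(\overline{\pi}(X))$, and by the $\partial_2$-clause of $A^2[E]$ (equation~(\ref{eqn:AE2})) this reduces to showing $E(X,\overline{\pi}(X))$.

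This is where the cardinality coarsening hypothesis on $E$ enters. Since $\pi:M\to M$ is a bijection, $\overline{\pi}(X)\approx X$, so by cardinality coarsening of $E$ we conclude $E(X,\overline{\pi}(X))$, which finishes the verification. The main subtlety, and essentially the only place where a nontrivial hypothesis is used besides cardinality coarsening, is ensuring that $\pi$ exists as a total bijection of $M$ whose graph is a genuine element of $S_2[M]$; this is exactly what the equicardinality assumption on $M\setminus \mathrm{rng}(\partial_i)$ provides, and without it one could only recover the natural bijection on the ranges, not an isomorphism of the full induced structures $\mathcal{N}_1$ and $\mathcal{N}_2$. Everything else follows from routine manipulations parallel to the first half of the proof of Theorem~\ref{thm:ncr=iia=cca}.
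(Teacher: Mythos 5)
Your proposal is correct and follows essentially the same route as the paper's own proof: extend the natural bijection $\Gamma$ by a witnessing bijection between the complements of the ranges, then reduce the isomorphism condition for $\partial$ to $E(X,\overline{\pi}(X))$ via $A^2[E]$ and discharge it by cardinality coarsening. The only differences are notational, plus your (welcome) extra care in noting that the graph of the combined map lies in $S_2[M]$ by comprehension.
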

\noindent This theorem of Fine's occurs far within his book \cite{Fine2002} which treats a great variety of topics and issues. In our view, this result deserves to be better known-- for instance, it is not discussed in the \emph{Philosophical Studies} book symposium on Fine's book (cf. \cite{Fine2005ac}), nor is it treated in Burgess' discussion of Fine (\cite{Burgess2005} Chapter~3). For this reason and for the sake of completeness, let us record its proof:
\begin{proof}
By the hypothesis, there is a bijection from  $M\setminus \mathrm{rng}(\partial_1)$ to $M\setminus \mathrm{rng}(\partial_2)$. But the natural bijection $\Gamma$ is a bijection from $\mathrm{rng}(\partial_1)$ to $\mathrm{rng}(\partial_2)$. By joining these two bijections, we may obtain a  bijection~$\Delta:M\rightarrow M$ such that $\Delta \upharpoonright \mathrm{rng}(\partial_1)$ is the natural bijection~$\Gamma:\mathrm{rng}(\partial_1)\rightarrow \mathrm{rng}(\partial_2)$. Extend to~$\overline{\Delta}:\mathcal{N}_1\rightarrow \mathcal{N}_2$ by setting~$\overline{\Delta}(X)=\{\Delta(x): x\in X\}$. Then we claim that~$\overline{\Delta}:\mathcal{N}_1\rightarrow \mathcal{N}_2$ is an isomorphism. Let~$X\in S_1[M]$. We must show that $\overline{\Delta}(\partial_1(X)) = \partial_2(\overline{\Delta}(X))$. 
Since~$\Delta$ extends the natural bijection, this is equivalent to  $ \partial_2(X) = \partial_2(\overline{\Delta}(X))$. 
And since~$\mathcal{M}\models A^2[E]$, this is equivalent to~$E(X, \overline{\Delta}(X))$. But since~$E$ is cardinality coarsening, of course $E(X, \overline{\Delta}(X))$ follows from~$\overline{\Delta}(X)\approx X$.
\end{proof}

Some special cases of Fine's Theorem can be viewed as a combination of certain parts of our results. For instance, if $E$ is cardinality coarsening, then $E$ is bicardinality coarsening (cf. Figure~\ref{diagram1}) and hence $A[E]$ is surjectively relatively categorical by one direction of Theorem~\ref{thm:coverthm}. This same result follows from the special case of Fine's theorem wherein one assumes that $M\setminus \mathrm{rng}(\partial_1)$ and $M\setminus \mathrm{rng}(\partial_2)$ are both empty. However, nothing in Fine's Theorem concerns the other direction of our results, namely that certain conditions like cardinality coarsening on small concepts and bicardinality coarsening are necessary for various forms of relative categoricity. Another difference between our work and Fine's work is with the precise induced models which figure in the statement of the results. In particular, the induced structures~$\mathcal{N}_i$ in (\ref{eqn:fineinduced}) are different from the induced structures~$\mathcal{M}_i$ from (\ref{eqn:induced}) with which we have been working  in that the structures~$\mathcal{M}_i$ restrict their first-order domain down to the range of the abstraction operator~$\partial_i$. Of course, these two types of induced models will align in the case where the abstraction operators are surjective. So while Fine's theorem predicts that Hume's Principle is, in our terminology, surjectively relatively categorical, it does not obviously have any implications for the natural relative categoricity of Hume's Principle.

Fine's second categoricity theorem concerns a general theory of abstraction which he terms~${\tt GA}^{+}$. Roughly, this theory is the amalgamation of abstraction principles $A[E]$ which satisfy a combination of Tarski-Sher permutation invariance conditions~(\ref{eqn:defn:permTS}) and varieties of stability-like conditions as discussed in \S\ref{sec05stable}. For the sake of simplicity, let's consider the case of a single such abstraction principle~$A[E]$. Fine's second theorem then indicates that any model $\mathcal{M}$ of $A^2[E]$ which satisfies the constraint that $M\setminus \mathrm{rng}(\partial_1)$ and $M\setminus \mathrm{rng}(\partial_2)$ have the same cardinality will be such that the induced structures $\mathcal{N}_i$ in (\ref{eqn:fineinduced}) are isomorphic. The official statement of Fine's second theorem is more complicated since one must precisely define the analogue $A^2[E_1, E_2, \ldots]$ of $A^2[E]$ in the case where abstraction principles associated to $E_1, E_2, \ldots$ are present. See Fine~\cite{Fine2002} p.~189 for the precise statement of this result, and see Fine~\cite{Fine2002} p.~170 for the precise statement of the theory~${\tt GA}^{+}$. 

In the subsequent section of his book (\cite{Fine2002} pp.~189~ff), Fine proceeds to examine the interpretability strength of~${\tt GA}^{+}$ and related theories. It is in the context of surveying these results (cf. \cite{Burgess2005} p. 171) that Burgess introduces the notion what we have called permutation invariance~(\ref{eqn:defn:piI}), which does not occur in Fine's own work. In particular, Burgess introduces a theory (cf. \cite{Burgess2005} p. 173) in which one assumes that the equivalence relations~$E$ are permutation invariant in our sense~(\ref{eqn:defn:piI}) and further that there are at most two equivalence classes:
\begin{myequation}
\exists \; X,Y \; \forall \; Z \; (E(Z,X) \vee E(Z,Y))
\end{myequation}
This last condition is less restrictive than one might initially expect, since Burgess is working in a setting where one can consider equivalence relations not only on second-order objects, but on third order-objects, fourth-order objects, etc. However, Burgess uses this theory merely to motivate elements of Fine's theory, so that neither Fine nor Burgess suggest restricting attention to theories which satisfy these specific conditions.

As to its broader philosophical significance, Fine notes that one path to determinacy of truth-value would proceed through categoricity results like Theorem~\ref{thm:finethm}. For instance, discussing the specialization of this result to the ``number~of'' abstraction operator, Fine writes: ``Thus once we know the cardinality of the non-numbers, we are in a position to specify the truth of every arithmetical statement in purely logical terms'' (\cite{Fine2002} p. 86, cf. p. 93). But ultimately Fine rejected this path to determinacy of truth-value, due to concerns about one's access to claims about the cardinality of parts of the domain. He writes: ``The difficulty with this approach is to see how someone could grasp what these truth-conditions are without already having access to an infinite domain of abstract objects'' (\cite{Fine2002} p.  94). This was one reason among many that led Fine to develop an alternative approach on which one postulates ``a procedure for the construction of the domain'' (\cite{Fine2006ab} p. 90). This ``procedural postulationism'' is designed to secure not only determinacy of truth-value but also determinacy of reference (\cite{Fine2006ab} p. 89, \cite{Fine2002} p. 100). While this is no place to discuss Fine's later procedural postulationism, it's worth underscoring that the specific worry which Fine cites with respect to categoricity results does not seem so damaging to one who would insist on the requirement of natural relative categoricity. For, unlike the hypotheses of Fine's Theorem (Theorem~\ref{thm:finethm}), there are no assumptions in natural relative categoricity about the cardinality of the non-abstracts. 
However, Fine's concern might worry someone who insisted on the requirement of surjective relative categoricity as it is unclear 
how someone could have advance assurance that every object is an abstract.

\section{Hodes and Supervaluationism}\label{sec05:hodes}

The work of Hodes (\cite{Hodes1984}, \cite{Hodes1990aa}, \cite{Hodes1991}) constitutes a sustained attempt to forge techniques and ideas from Frege's \emph{Grundlagen} into a viable version of fictionalism. Hodes writes: ``[\ldots] mathematical discourse, when carried on within the mathematical object-picture, [is] a special sort of fictional discourse: numbers are fictions `created' with a special purpose, to encode numerical object-quantifiers and thereby enable us to `pull down' a fragment of third-order logic, dressing it in first-order clothing'' ( \cite{Hodes1984} p. 144).  Hodes differs from other fictionalists in his invocation of supervaluationist semantics, by which Hodes can say with fictionalists  that mathematical language doesn't refer, but also affirm that certain sentences containing mathematical vocabulary are true in a very demanding sense.

One can implement Hodes-style supervaluationism with respect to an arbitrary equivalence relation. Hodes does this only with respect to equivalence relations associated to cardinality and sets, in \cite{Hodes1990aa} and \cite{Hodes1991} respectively. But it seems to us that the definitions offered in \cite{Hodes1990aa} pp.~364-365, \cite{Hodes1991} p.~158 naturally generalize as follows:
\begin{defn}\label{defn:bivalence} 
Suppose that $E(X,Y)$ is an $L_0$-formula and $\kappa$ is a cardinal, and suppose further that $\varphi$ is an $L_0[\partial]$-sentence. Then $\varphi$ is said to be \emph{${\kappa}$-supertrue relative to $A[E]$} (resp. \emph{${\kappa}$-superfalse relative to $A[E]$}) if for all  standard models $\mathcal{M}$ of $A[E]$ of cardinality $\kappa$, it is also the case that $\mathcal{M}\models \varphi$ (resp. $\mathcal{M}\models \neg \varphi$). Further, $\varphi$ is said to be \emph{$\kappa$-bivalent relative to $A[E]$} if $\varphi$ is $\kappa$-supertrue or $\kappa$-superfalse.
\end{defn}
\noindent In this definition, we are, as in the previous sections, identifying the cardinality of the model $\mathcal{M}$ of $A[E]$ with the cardinality of its first-order part. 

As this definition makes clear, Hodes' approach was focused on the standard semantics for second-order logic. Moreover, there's a reason for this: if one instead used the Henkin semantics, then the analogue of supertruth would  reduce to provability, at least in the case where $A[E]$ has only infinite models. This is the content of the following elementary proposition, whose proof we omit since it is a simple application of the L\"owenheim-Skolem theorems.
\begin{prop}
Suppose that $E(X,Y)$ is an $L_0$-formula and  $A[E]$ is consistent but has only infinite models. Suppose further that $\varphi$ is an $L_0[\partial]$-sentence and $\kappa$ is an infinite cardinal. If all models $\mathcal{M}$ of $A[E]$ of cardinality $\kappa$ satisfy $\varphi$, then $A[E]$ proves $\varphi$.
\end{prop}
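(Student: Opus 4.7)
The plan is to argue by contraposition, using completeness and the Löwenheim–Skolem theorems for Henkin-semantics second-order logic (equivalently, many-sorted first-order logic). Suppose $A[E]\not\vdash \varphi$. Then $A[E]\cup\{\neg\varphi\}$ is consistent, so by the completeness theorem for the Henkin semantics (recalled in \S\ref{sec02}) it has a model $\mathcal{N}$. Since $A[E]$ has only infinite models by hypothesis, the first-order part of $\mathcal{N}$ is infinite. The goal is to convert $\mathcal{N}$ into a model of $A[E]\cup\{\neg\varphi\}$ whose first-order part has cardinality exactly $\kappa$, which will contradict the hypothesis that every model of $A[E]$ of cardinality $\kappa$ satisfies $\varphi$.

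For the cardinality adjustment, first observe that the signature $L_0[\partial]$ and the schemas constituting $A[E]$ (the abstraction principle, full comprehension, ${\tt AC}$, and ${\tt GC}$) are all countable, so the underlying many-sorted first-order theory has a language of size $\aleph_0\leq \kappa$. Introduce $\kappa$ many fresh constants $c_\alpha$ ($\alpha<\kappa$) of the first-order sort, together with axioms $c_\alpha\neq c_\beta$ for $\alpha\neq \beta$. Any finite fragment of this expanded theory is satisfied in $\mathcal{N}$, using the infinitude of its first-order part to interpret the finitely many constants appearing. By compactness, the expansion has a model $\mathcal{N}^\ast$, whose first-order part therefore has cardinality at least $\kappa$.

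Now apply the downward Löwenheim–Skolem theorem for many-sorted first-order logic to $\mathcal{N}^\ast$, starting with the $\kappa$ constants $c_\alpha$ in the first-order sort and closing under Skolem functions. This yields an elementary submodel $\mathcal{N}^{\ast\ast}\preceq \mathcal{N}^\ast$ of total size $\kappa$, whose first-order part still contains every $c_\alpha$ and hence has cardinality exactly $\kappa$. The $L_0[\partial]$-reduct of $\mathcal{N}^{\ast\ast}$ is then a model of $A[E]\cup\{\neg\varphi\}$ of cardinality $\kappa$, contradicting the hypothesis.

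The only minor obstacle is the bookkeeping around many-sortedness: Löwenheim–Skolem controls the total size of the structure, whereas we need to specifically fix the size of the first-order sort. This is handled uniformly by the argument above, since the $c_\alpha$ force the first-order sort to have size at least $\kappa$ while the Skolem-hull construction bounds every sort by $\kappa$. No subtlety particular to abstraction principles enters the proof; this is why the author labels the result a simple application of Löwenheim–Skolem.
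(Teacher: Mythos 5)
Your proof is correct and is precisely the argument the paper has in mind: the authors omit the proof, describing it only as ``a simple application of the L\"owenheim--Skolem theorems,'' and your contrapositive argument (completeness, then compactness with $\kappa$ fresh first-order constants to push the first-order sort up, then a downward Skolem hull of total size $\kappa$ to pin the first-order part at exactly $\kappa$) is the standard way to carry that out, including the correct handling of the many-sorted cardinality convention.
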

\noindent So this proposition indicates that something is gained by Hodes' invocation of standard models, namely, the idea of supertruth does not reduce to that of deduction.

Hodes' fictionalism suggests the idea of maximizing bivalence, and so Hodes himself  established various results indicating that ``[\ldots] our actual mathematical reasoning makes no use of sentences parsed by non-bivalent sentences [\ldots]'' (\cite{Hodes1990aa} p. 370, cf. Observations 4-5 pp. 367-368). With an eye towards maximizing bivalence, we  introduce the following definition of bivalence-compatibility:
\begin{defn}\label{defn:bivalenceompat}
The abstraction principle $A[E]$ is \emph{bivalence-compatible} if there is an infinite cardinal $\kappa$ such that $\varphi$ is $\kappa$-bivalent relative to $A[E]$ for each $L_0[\partial]$-sentence $\varphi$.
\end{defn}
\noindent There might be other ways of maximizing bivalence, but this at least seems like a natural enough route: the idea is that there's some size such that all standard models of that size must agree on the truth-value of sentences expressible using the abstraction operator.

It's then natural to ask which abstraction principles are bivalence compatible. By using Fine's Theorem~\ref{thm:finethm} and the Coarsening Dichotomy (Lemma~\ref{lem:dichtomylem}), one can easily show:
\begin{prop}\label{prop:iamaprop}
If~$E$ is cardinality coarsening then~$A[E]$ is bivalence compatible, and indeed $A[E]$ is $\omega_1$-bivalent. 
\end{prop}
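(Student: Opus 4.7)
The strategy is to prove the stronger claim that any two standard models of $A[E]$ of cardinality $\omega_1$ are isomorphic as $L_0[\partial]$-structures. Since isomorphism entails elementary equivalence, every $L_0[\partial]$-sentence will then be either $\omega_1$-supertrue or $\omega_1$-superfalse relative to $A[E]$, yielding $\omega_1$-bivalence and hence bivalence-compatibility in the sense of Definition~\ref{defn:bivalenceompat}. The main engine is Fine's Theorem~\ref{thm:finethm}: once the hypothesis $|M \setminus \mathrm{rng}(\partial_1)| = |M \setminus \mathrm{rng}(\partial_2)|$ is secured, it produces the desired isomorphism.

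Fix two standard models of $A[E]$ of cardinality $\omega_1$. Using a bijection supplied by choice in the metatheory, transport one onto the first-order domain of the other, so that we may assume they share a common first-order domain $M$ of size $\omega_1$ and differ only in their abstraction operators $\partial_1, \partial_2$. Pool these into a single structure $\mathcal{M} = (M, P(M), P(M^2), \ldots, <, \partial_1, \partial_2)$, where $<$ is any well-ordering of $M$ from the metatheory. Each conjunct of the abstraction axiom of $A^2[E]$ is just the abstraction axiom of $A[E]$ for the corresponding operator, so $\mathcal{M}$ models the abstraction axiom of $A^2[E]$; full comprehension is automatic for the standard reduct, while ${\tt AC}$ and ${\tt GC}$ are witnessed by $<$. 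Hence $\mathcal{M}$ is a structure of the form in~(\ref{eqn:1}) to which Fine's Theorem~\ref{thm:finethm} could potentially apply.

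The crux is a cardinality calculation ensuring the hypothesis of Fine's Theorem is met. Because $E$ is cardinality coarsening, every $\approx$-class is contained in a single $E$-class, so $|\nicefrac{P(M)}{E}| \leq |\nicefrac{P(M)}{\approx}|$; this is essentially the content of the Coarsening Dichotomy, Lemma~\ref{lem:dichtomylem}, adapted to our setting. But for $|M| = \omega_1$, $|\nicefrac{P(M)}{\approx}|$ is just the number of distinct cardinalities of subsets of $M$, namely $\{0, 1, 2, \ldots, \omega, \omega_1\}$, which has cardinality $\omega$. Since $\partial_i$ factors through the $E$-quotient, $|\mathrm{rng}(\partial_i)| \leq \omega < \omega_1 = |M|$, so $|M \setminus \mathrm{rng}(\partial_i)| = \omega_1$ for both $i = 1, 2$, and in particular the two complements have equal cardinality $\omega_1$. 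Fine's Theorem~\ref{thm:finethm} now produces the desired isomorphism between the induced structures $\mathcal{N}_1$ and $\mathcal{N}_2$ of~(\ref{eqn:fineinduced}), which coincide with the standard models we started from. The main obstacle is exactly this bookkeeping: one must exploit that $\omega_1$ has only countably many smaller cardinals to force the ranges of the abstraction operators to be small and their complements to fill $M$; the argument breaks down at $\kappa = \omega$ because there the bound on $|\nicefrac{P(M)}{E}|$ no longer undercuts $|M|$, and the complements of the ranges can genuinely vary.
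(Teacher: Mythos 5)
Your proposal is correct and follows essentially the same route as the paper: bound $\left|\nicefrac{P(M)}{E}\right|$ by $\left|\nicefrac{P(M)}{\approx}\right|=\omega$ using cardinality coarsening (the content of the Coarsening Dichotomy at $\kappa=\omega_1$), conclude that both $\mathrm{rng}(\partial_i)$ are of size $<\omega_1$ so their complements both have cardinality $\omega_1$, and then invoke Fine's Theorem~\ref{thm:finethm} to get an isomorphism and hence elementary equivalence. The only difference is presentational: you make explicit the step of transporting and pooling two standard models of $A[E]$ into a single model of $A^2[E]$, which the paper leaves implicit.
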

\begin{proof}
Letting $\kappa=\omega_1$, let's note that the Coarsening Dichotomy Lemma implies that $\left|\nicefrac{P(\kappa)}{E}\right|<\kappa$. Since $\left|\nicefrac{P(\kappa)}{E}\right|<\kappa$, it  follows that for any model $\mathcal{M}$ of $A^2[E]$~(\ref{eqn:AE2}) with cardinality $\kappa$, we have that $\left|\mathrm{rng}(\partial_i)\right|<\kappa$. By Infinite Sums are Maxs~(\ref{eqn:InfSumMax}), we have that $\left|M\setminus \mathrm{rng}(\partial_1)\right|=\kappa=\left|M\setminus \mathrm{rng}(\partial_2)\right|$. Then by Fine's Theorem~\ref{thm:finethm}, we have $\mathcal{N}_1$ and $\mathcal{N}_2$ as defined in equation~(\ref{eqn:fineinduced}) are isomorphic. So for any sentence $\varphi$ of $L_0[\partial]$, we have that $\mathcal{N}_1\models \varphi$ iff $\mathcal{N}_2\models \varphi$.
\end{proof}

The situation with respect to bicardinality coarsening and cardinality coarsening on small concepts is slightly more subtle. As for bicardinality coarsening, since it was introduced in connection with surjective relative categoricity, it's natural to modify the definition of $\kappa$-bivalence (Definition~\ref{defn:bivalence}) so that attention is restricted to models of $A[E]$ where the abstraction operator is surjective. Now it's not necessarily the case that every bicardinality coarsening equivalence relation will have such a model for a given $\kappa$. For instance, the Bicardinality Principle from \S\ref{sec04} won't have such a model for $\kappa=\omega_1$, as one can easily check by reference to Proposition~\ref{prop:existencesurject}. However, for those cardinals $\kappa$ for which there are such models, Theorem~\ref{thm:coverthm} trivially implies that bicardinality coarsening suffices for $\kappa$-bivalence in the modified sense, since again isomorphism suffices for elementary equivalence.

As for cardinality coarsening on small concepts, it is simply unknown to us whether this implies bivalence compatibility. So we record the following question:
\begin{Q}\label{eqn:Q2}
Suppose that $E(X,Y)$ is an $L_0$-formula which is provably an equivalence relation in our background second-order logic and which is cardinality coarsening on small concepts. Is it necessarily the case that $A[E]$ is bivalence compatible? 
\end{Q}
\noindent If this question is answered in the negative, then it would indicate that the way in which natural relative categoricity captures the idea of determinacy of truth-value is distinct from the way in which Hodes' supervaluationism captures this idea. Of course, these two notions are extensionally distinct since an equivalence relation can be bivalence compatible simply by mimicking Hume's Principle on domains of cardinality $\omega_1$ and appealing to Proposition~\ref{prop:iamaprop}, while violating cardinality coarsening on small concepts on domains of other sizes. So the interesting direction is that which is at issue in the above question, since a negative answer would indicate that these two determinacy of truth-value ideas studied in our work and Hodes work are orthogonal to one another.

\section{Conclusions}\label{sec07}

Our goal has been to articulate various notions of relative categoricity for abstraction principles and to study which abstraction principles are accordingly relative categorical. The import of our  Theorem~\ref{thm:ncr=iia=cca} and Theorem~\ref{thm:coverthm} is that such relatively categorical abstraction principles qualitatively look like Hume's Principle. Our results contravene the general experience we have with relative categoricity. For, this notion is highly non-domain-specific in that one has relatively categorical axiomatizations of number, set, the reals, etc. But when we restrict attention down to abstraction principles, our  Theorem~\ref{thm:ncr=iia=cca} and Theorem~\ref{thm:coverthm} show that relative categoricity is tied to cardinality coarsening notions.

Finally, it's worth emphasizing the limited scope of our study. First, we have focused exclusively on abstraction principles formed from equivalence relations on unary concepts, and many natural abstraction principles like that associated to the Burali-Forti paradox concern abstraction principles on binary concepts. Antonelli (\cite{Antonelli2010ac} pp. 10-11) notes that the equivalence relation associated to the Burali-Forti paradox is permutation invariant in the sense of~(\ref{eqn:defn:piI}) once one extends this notion naturally from equivalence relations on unary concepts to binary concepts, and similarly it will be injection invariant. So this indicates that the study of relative categoricity concepts will be quite different when one goes from equivalence relations on unary concepts to those on binary concepts. For instance, this example indicates that in this more general setting we can't have that injection invariance suffices for having standard models of infinite cardinality (cf. Proposition~\ref{prop:stabileforcc}). So it's hard to predict \emph{apriori} how much of the present study holds when we pass from equivalence relations on unary concepts to those on binary concepts or $n$-ary concepts.

A similar limitation that should be emphasized is that we have focused on some specific notions of relative categoricity, namely natural relative categoricity and surjective relative categoricity. It would of course be ideal to have analogues of Theorem~\ref{thm:ncr=iia=cca} and Theorem~\ref{thm:coverthm} for the notion of relative elementary equivalence and bivalence compatibility (Definition~\ref{eqn:nateleequiv} and Definition~\ref{defn:bivalenceompat}), but we have been unable to obtain any such characterizations. Further, as indicated in Question~\ref{eqn:Q1} and Question~\ref{eqn:Q2}, even the relationships between these notions and our notions is not yet resolved. As a final note, it should be underscored that natural relative categoricity concerns a specific bijection (namely, the natural bijection) being an isomorphism. In Corollary~\ref{cor:whenarbirisnat}, we noted that this is the only isomorphism when natural relative categoricity does obtain. But it would be important to study the more general notion of relative categoricity which did not restrict attention to the natural bijection. We have been unable to establish analogues of Theorem~\ref{thm:ncr=iia=cca} and Theorem~\ref{thm:coverthm} for this more general notion. Our motivation for considering the natural bijection $\Gamma(\partial_1(X))=\partial_2(X)$ is that this seemed like a natural enough way for agents to exchange information about their abstraction operators, broadly similar in character to Parsons' agents translating their interlocutor's arithmetical vocabulary by their own arithmetical vocabulary.

\section*{Acknowledgments}

Parts of this material were presented on March 27, 2012 at the workshop on the Mathematics of Abstraction at Birkbeck, University of London; at May 23, 2014 at the workshop Abstraction: Philosophy and Mathematics at the University of Oslo; and on January 7, 2015 at the logic seminar in the department of Logic and Philosophy of Science at the University of California, Irvine. Thanks to the organizers and participants of these events and seminars, as well as to the many others who provided valuable feedback on this work. In particular, thanks to: Aldo Antonelli, Kyle Banick, Roy Cook, Samuel Eklund, Salvatore Florio, J.~Ethan Galebach, Jeremy Heis, Graham Leach-Krouse, Greg Lauro, Sarah Lawsky, {\O}ystein Linnebo, Richard Mendelsohn, Christopher Mitsch, Alexander C.~R. Oldemeier, Markus Pantsar, Jonathan Payne, Terence Parsons, Agust\'in Rayo, Sam Roberts, Marian Rogers, J.~Schatz, Gabriel Uzquiano, and Kai Wehmeier. Thanks also to the anonymous referees for many helpful comments. Walsh would also like to acknowledge the support of {\O}ystein Linnebo's European Research Council-funded project ``Plurals, Predicates, and Paradox'' and a Kurt G\"odel Research Prize Fellowship.

\bibliographystyle{alpha}
\bibliography{rel-cat.bib}

\end{document}